\patchcmd{\@addmarginpar}{\ifodd\c@page}{\ifodd\c@page\@tempcnta\m@ne}{}{}
\DeclareSymbolFont{cyrletters}{OT2}{wncyr}{m}{n}
\DeclareMathSymbol{\Sha}{\mathalpha}{cyrletters}{"58}
\newcommand{\segment}[2]{\subsection{#2}\label{#1}}
\newcommand{\ssegment}[2]{\subsubsection{#2}\label{#1}}
\theoremstyle{definition}
\newtheorem*{prop*}{Proposition}
\theoremstyle{definition}
\newtheorem*{thm*}{Theorem}
\theoremstyle{definition}
\newtheorem*{cor*}{Corollary}
\theoremstyle{definition} 
\newtheorem{ssprop}[subsubsection]{Proposition}
\newcommand{\ssProposition}[2]{\begin{ssprop} \label{#1} 
{#2} \end{ssprop}}
\theoremstyle{definition} 
\newtheorem{sprop}[subsection]{Proposition}
\newcommand{\sProposition}[2]{\begin{sprop} \label{#1} 
{#2} \end{sprop}}
\theoremstyle{definition} 
\newtheorem{sconj}[subsection]{Conjecture}
\theoremstyle{definition} 
\newtheorem{sthm}[subsection]{Theorem}
\theoremstyle{definition} 
\newtheorem{ssthm}[subsubsection]{Theorem}
\theoremstyle{definition} 
\newtheorem{sslm}[subsubsection]{Lemma}
\theoremstyle{definition} 
\newtheorem{slm}[subsection]{Lemma}
\newcommand{\sLemma}[2]{\begin{slm} \label{#1} 
{#2} \end{slm}}
\theoremstyle{definition} 
\newtheorem{scor}[subsection]{Corollary}
\theoremstyle{definition} 
\newcommand{\ssCorollary}[2]{\begin{scor} \label{#1} 
{#2} \end{sscor}}
\theoremstyle{definition} 
\newtheorem{ssconj}[subsubsection]{Conjecture}
\theoremstyle{definition}
\newtheorem*{conj*}{Conjecture}
\theoremstyle{definition} 
\newtheorem{sscond}[subsubsection]{Condition}
\theoremstyle{definition} 
\newtheorem{ssrmk}[subsubsection]{Remark}
\newcommand{\ssRemark}[2]{\begin{ssrmk} \label{#1} 
{#2} \end{ssrmk}}
\let\oldmarginpar\marginpar
\renewcommand\marginpar[1]{\-\oldmarginpar[\raggedleft\footnotesize #1]%
{\raggedright\footnotesize #1}}
\renewcommand*\env@matrix[1][\arraystretch]{%
  \edef\arraystretch{#1}%
  \hskip -\arraycolsep
  \let\@ifnextchar\new@ifnextchar
  \array{*\c@MaxMatrixCols c}}
\newcommand{\Rep}{\operatorname{\bf{Rep}}}
\newcommand{\Vect}{\operatorname{\bf{Vect}}}
\newcommand{\from}{\leftarrow}
\newcommand{\xto}{\xrightarrow}
\newcommand{\xfrom}{\xleftarrow}
\newcommand{\surj}{\twoheadrightarrow}
\newcommand{\xyto}[2]{\underset{#2} {\overset{#1}\rightrightarrows}}
\renewcommand{\Im}{\operatorname{Im}}
\newcommand{\Spec}{\operatorname{Spec}}
\newcommand{\Cone}{\operatorname{Cone}}
\newcommand{\tot}{\operatorname{tot}}
\newcommand{\Hom}{\operatorname{Hom}}
\newcommand{\Isom}{\operatorname{Isom}}
\newcommand{\Aut}{\mathrm{Aut}\,}
\newcommand{\m}[1]{\mathrm{#1}}
\newcommand{\fk}[1]{\mathfrak{#1}}
\newcommand{\bb}[1]{\mathbb{#1}}
\newcommand{\ka}{\kappa}
\newcommand{\Si}{\Sigma}
\newcommand{\ze}{\zeta}
\newcommand{\ga}{\gamma}
\newcommand{\al}{\alpha}
\newcommand{\be}{\beta}
\newcommand{\om}{\omega}
\newcommand{\Om}{\Omega}
\newcommand{\Qp}{{\QQ_p}}
\newcommand{\Zp}{{\ZZ_p}}
\newcommand{\Qbar}{{\overline{\QQ}}}
\newcommand{\ZZ}{\bb{Z}}
\newcommand{\CC}{\bb{C}}
\newcommand{\LL}{\bb{L}}
\newcommand{\MM}{\bb{M}}
\newcommand{\DD}{\bb{D}}
\newcommand{\NN}{\bb{N}}
\newcommand{\QQ}{\bb{Q}}
\newcommand{\RR}{\bb{R}}
\newcommand{\Rr}{\mathcal{R}}
\newcommand{\PP}{\bb{P}}
\newcommand{\pP}{\fk{p}}
\newcommand{\Gg}{\mathcal{G}}
\newcommand{\Hh}{\mathcal{H}}
\renewcommand{\AA}{\bb{A}}
\newcommand{\Yy}{\mathcal{Y}}
\newcommand{\Oo}{\mathcal{O}}
\newcommand{\Tt}{\mathcal{T}}
\newcommand{\Aa}{\mathcal{A}}
\newcommand{\Mm}{\mathcal{M}}
\newcommand{\Xx}{\mathcal{X}}
\newcommand{\Zz}{\mathcal{Z}}
\newcommand{\Cpx}{\operatorname{Cpx}}
\newcommand{\Bdry}{\operatorname{Bdry}}
\newcommand{\inv}{^{-1}}
\newcommand{\areq}{\ar@{=}}
\newcommand{\suphook}{\ar@{^(->}}
\newcommand{\subhook}{\ar@{_(->}}
\newcommand{\smses}[6]
{
\[
\xymatrix{
1 \ar[r] &
#1 \ar[r]_-{#2} &
#3 \ar[r]_-{#4} &
#5 \ar[r] \ar@/_1.5pc/[l]_-{#6} &
1
}
\]
}
\newcommand{\To}{\;\longrightarrow\;}
\newcommand{\dR}{{\rm {dR}}}
\newcommand{\et}{{\textrm {\'et}}}
\newcommand{\mot}{{\rm {mot}}}
\newcommand{\un}{\m{un}}
\newcommand{\Spt}{\operatorname{Spt}}
\newcommand{\Cpl}{\operatorname{Cpx}}
\newcommand{\PSh}{\operatorname{Psh}}
\newcommand{\DA}{\operatorname{DA}}
\newcommand{\Sus}{\operatorname{Sus}}
\newcommand{\Ho}{\operatorname{Ho}}
\newcommand{\Cotors}{\operatorname{Cotors}}
\newcommand{\eff}{\m{eff}}
\newcommand{\CMM}{\m{Mdga} }
\newcommand{\Mdga}{\m{Mdga}}
\newcommand{\hrat}{\operatorname{ C } }
\newcommand{\Sm}{\operatorname{Sm}}
\newcommand{\op}{^\m{op}}
\newcommand{\ccM}{\operatorname{cc\MM}}
\newcommand{\DMdga}{\operatorname{DMdga}}
\newcommand{\one}{\mathbbm{1}}
\newcommand{\onef}{\one^f}
\newcommand{\SptSSA}{\Spt_S^\fk{S}(\Aa)}
\newcommand{\Sing}{\m{Sing}}
\newcommand{\cMon}{\operatorname{cMon}}
\newcommand{\DAcell}{\operatorname{DA_{cell}}}
\newcommand{\cell}{\m{cell}}
\newcommand{\tototo}{\underset{\to}{\overset{\to}{\to}}}
\newcommand{\cHfd}{\operatorname{cHfd}}
\newcommand{\Cogpd}{\operatorname{Cogpd}}
\newcommand{\dga}{\operatorname{cdga}}
\newcommand{\Ddga}{\operatorname{Ddga}}
\newcommand{\Ind}{\operatorname{Ind}}
\newcommand{\hocolim}{\operatorname{hocolim}}
\newcommand{\DDMdga}{\operatorname{\DD Mdga}}
\newcommand{\DDdga}{\operatorname{\DD dga}}
\newcommand{\C}{\operatorname{C}}
\newcommand{\colim}{\operatorname{colim}}
\newcommand{\can}{\m{can}}
\newcommand{\Sh}{\operatorname{Sh}}
\newcommand{\Gal}{\operatorname{Gal}}
\newcommand{\Set}{\operatorname{Set}}
\newcommand{\Cell}{\operatorname{Cell}}
\newcommand{\Uni}{\operatorname{Uni}}
\newcommand{\Comod}{\operatorname{Comod}}
\newcommand{\RRe}{\operatorname{\RR e}}
\newcommand{\DMod}{\operatorname{DMod}}
\newcommand{\DcoComod}{\operatorname{D^{co}Comod}}
\newcommand{\holim}{\operatorname{holim}}
\newcommand{\DTM}{\operatorname{DTM}}
\newcommand{\TM}{\operatorname{TM}}
\title[Rational motivic path spaces]{Rational motivic path spaces and Kim's relative unipotent section conjecture}
\author[Dan-Cohen, Schlank]{Ishai Dan-Cohen and Tomer Schlank}
\thanks{I.D.C. supported by Priority Program 1786 of the Deutsche Forschungsgemeinschaft {\it Homotopy theory and algebraic geometry} and by ISF grant 87590031.} 
\thanks{T.S. supported by an Alon Fellowship.}
\date{\today}
\begin{document}

\maketitle

\raggedbottom
\SelectTips{cm}{11}

\begin{abstract}
We develop the foundations of commutative algebra objects in the category of motives, which we call ``motivic dga's''. Work of White and of Cisinski-D\'eglise provides us with a suitable model structure. This enables us to reconstruct the unipotent fundamental group of a pointed scheme from the associated augmented motivic dga, and provides us with a factorization of Kim's relative unipotent section conjecture into several smaller conjectures with a homotopical flavor. 

\bigskip

\noindent
\textbf{AMS 2010 Mathematics Subject Classification:} 14G05, 14C15, 55P62
\end{abstract}

\setcounter{tocdepth}{1}
\tableofcontents

\setcounter{section}{-1}

\section{Introduction}

\segment{}{}
Let $Z$ be an open integer scheme (open in $\Spec \Oo_K$, $K$ a number field). For $p \in Z$ a closed point, we let $Z_p$ denote the complete local scheme at $p$. Let $X$ be a smooth curve over $Z$ with \'etale divisor at infinity. The classical method of \emph{Chabauty} for studying the set $X(Z)$ of $Z$-points (at least when $X$ is proper) constructs a subspace $X(Z_p)_\m{Chab}$ of $X(Z_p)$ which contains $X(Z)$. This subspace of \emph{Chabauty points} is defined by analytic equations. Due to contributions by a range of authors, especially Coleman, it has turned out to be quite computable via a certain theory of $p$-adic integration, producing effective bounds on the number of $Z$-points in a certain range of cases. Outside of this range however, $X(Z_p)_\m{Chab} = X(Z_p)$, so Chabauty's method breaks down completely. Moreover, even within this range, $X(Z_p)_\m{Chab}$ is rarely equal to $X(Z)$. 

Better in this respect is Grothendieck's section conjecture which is expected to exactly classify the set of $Z$-points via the torsors of \'etale paths that connect them to a fixed base-point. Kim's conjecture, announced in \cite{nabsd} may be regarded as a variant of the section conjecture which is \emph{unipotent} on the one hand, and \emph{relative} on the other. \emph{Unipotent} refers to the fact that the profinite fundamental group of Grothendieck's conjecture is replaced by its prounipotent completion. This involves a significant loss of information, and, correspondingly, to an overabundance of torsors. Kim then restricts attention to those torsors which are \emph{locally geometric}, that is, come from $p$-adic points for some auxiliary prime $p$ of $Z$ (assumed to be of good reduction). The resulting conjecture is about extending local points to global points, hence \emph{relative}.

Kim's conjecture is also closely related to Chabauty's method. His original papers on the subject \cite{kimi, kimii} were devoted to the construction of a nested sequence of analytic subspaces
\[
\cdots \subset X(Z_p)_3 \subset X(Z_p)_2
\subset X(Z_p)_1 = X(Z_p)_\m{Chab},
\]
defined by $p$-adic \emph{iterated} integrals, and his foundational theorem there may be summarized by the inclusion
\[
\tag{*}
X(Z) \subset X(Z_p)_\m{Kim} : = \bigcap_n X(Z_p)_n.
\]
A series of works on the subject by Kim himself, as well as by a number of other authors \cite{BalakAppendix, CKtwo, CKIII, KimTangential, KimMassey, BalakDog, BalBesMul, BalBesMulComp, BalBesColGr} have shown that the analytic spaces $X(Z_p)_n$ (which are finite when not equal to all of $X(Z_p)$), are amenable to explicit computation in a range of cases beyond the Chabauty bound. The conjecture states that the inclusion (*) above is in fact an equality. Part of our goal here is to explain our belief that this conjecture should be more tractable than the section conjecture.

\segment{}{}
If Kim's approach to integral points is a classification in terms of torsors under the unipotent fundamental group, our work provides an a priori finer classification in terms of torsors under a certain rational loop-space. So Kim's homotopy classes of paths derive, in our approach, from \emph{actual} paths, in a rational, motivic sense. 

Our main theorem may be summarized roughly as follows. 

\begin{ssthm}[Factorization of Kim's cutter, preliminary version]
\label{thmfact}
We assume for technical reasons that $X$ is affine (but we expect this assumption to be spurious). We then have the following inclusions:
\begin{align*}
X(Z) &\subset X(Z_p)_{\mbox{motivically global}} \\ 
	& \subset X(Z_p)_{\mbox{pathwise motivically global}} \\
	&  \subset X(Z_p)_
{ \mbox{pathwise motivically global up to p-adic \'etale homotopy} } \\
	&  \subset X(Z_p)_{\mbox{Kim}}. \\
\end{align*}
Moreover, if $X$ is \emph{mixed Tate}, then we may replace ``$p$-adic \'etale homotopy'' with ``motivic homotopy''.
\end{ssthm}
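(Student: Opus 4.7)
The plan is to establish each of the four inclusions in turn by producing, for a point $y \in X(Z_p)$ that satisfies the relevant hypothesis, the extra structure needed to verify membership in the next larger set; then the mixed Tate refinement follows from a general comparison between motivic and p-adic \'etale realizations of path spaces when all relevant motives are mixed Tate. Throughout, the key objects will be the motivic path space $P^{\mathrm{mot}}(X; x_0, y)$ associated to a pair of points via the motivic dga constructions of Guzman--Dan-Cohen--Schlank, together with its various realizations.

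First, for $X(Z) \subset X(Z_p)_{\text{motivically global}}$, I would unwind definitions: a $Z$-point $x$ produces an augmented motivic dga on the (small) site of $Z$ whose fiber at $p$ is the augmented motivic dga of the $Z_p$-point obtained by restriction, so the latter tautologically extends globally. The step $X(Z_p)_{\text{motivically global}} \subset X(Z_p)_{\text{pathwise motivically global}}$ should follow by functoriality of the motivic path-space construction from augmented motivic dga's developed in the main body of the paper: if an augmented dga over $Z$ restricts to the one associated to a local point $y$, then applying the path-space functor globally produces a global motivic torsor whose restriction to $\Spec Z_p$ is the torsor $P^{\mathrm{mot}}(X; x_0, y)$.

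Next, the inclusion $X(Z_p)_{\text{pathwise mot. global}} \subset X(Z_p)_{\text{pathwise mot. global up to } p\text{-adic \'etale homotopy}}$ should be immediate from applying the p-adic \'etale realization functor to the global motivic path torsor; since realization commutes with restriction from $Z$ to $Z_p$, an actual global extension certainly induces one after passage to $p$-adic \'etale homotopy. The final inclusion $\subset X(Z_p)_{\text{Kim}}$ should come from the fact that Kim's unipotent \'etale $\pi_1$-torsor $P^{\et}_{\un}(X; x_0, y)$ is recovered from the p-adic \'etale path space by taking (appropriately derived) $\pi_0$, or equivalently by the bar/cobar construction on the rational cohomology dga; extending the path torsor globally therefore extends Kim's $\pi_1$-torsor globally, placing $y$ in $X(Z_p)_{\mathrm{Kim}}$. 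For the mixed Tate refinement, the point is that on the subcategory of mixed Tate motives over $Z$ the p-adic \'etale realization is fully faithful on the relevant Hopf-algebraic data (via Deligne--Goncharov and Levine), so that ``up to p-adic \'etale homotopy'' and ``up to motivic homotopy'' become equivalent.

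The main obstacle I anticipate is not in the logical implications themselves, which are essentially formal once the right categorical framework is in place, but rather in making the definitions of the intermediate sets robust: one must ensure that the motivic path space is constructed functorially enough to allow for ``global extensions'' that restrict correctly over $Z_p$, that the base-point data at $x_0$ (tangential or genuine) is treated consistently on the global and local side, and that the $p$-adic \'etale realization used in the second-to-last step agrees on the nose with the realization implicit in Kim's construction. Pinning down these compatibilities, and in particular the passage from the homotopy-coherent motivic path space to Kim's strict pro-unipotent $\pi_1$, is where the genuine content lies.
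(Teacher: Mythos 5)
Your logical skeleton --- establishing each inclusion in turn by functoriality of the path-space construction and then passing through realization --- is the same idea the paper organizes into the commuting diagram of segment \ref{diabig}, so the approach is not genuinely different. But there is a real gap that your final paragraph mislocates. You suggest that the hard part is bookkeeping (base-points, compatibility of restrictions), whereas in fact the hard part is that the vertical maps defining the intermediate sets are not tautologically well-defined: their existence is the content, and it is exactly where the affineness hypothesis enters, a hypothesis your proposal never uses. Concretely, the map sending an augmentation $\xi$ to the path ring $_{\xi}B_x$ lands in cotorsors only once one knows $H^0_\et\big(({_\xi B_x})_{\overline K},\Qp\big) \neq 0$ --- this is the connectedness theorem (\ref{ctd}); and the map sending a cotorsor $P$ to $\Spec H^0_t(P)$ (resp.\ $\Spec H^0_\et$) produces a genuine torsor in nonabelian cohomology only once one knows $H^i_t(P)=0$ (resp.\ $H^i_\et$) for $i\neq 0$ --- this is the concentration theorem (\ref{ctr}), whose proof uses that for $X$ an affine curve the \'etale cochain dga is cohomologically concentrated in degrees $0$ and $1$. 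Without these two theorems the ``global motivic torsor'' you invoke could have vanishing $H^0$ or nontrivial higher cohomology, and $\Spec H^0$ would not define an element of the target nonabelian $H^1$.

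Two further points of comparison. The passage from cotorsors to Kim's pro-unipotent $\pi_1$-torsor, which you correctly flag as nontrivial, is handled in the paper not by a direct bar/cobar identification but by the comparison theorem (\ref{ext-int}), itself a consequence of theorem \ref{21d}, which identifies $\Spec H^0$ of the path ring of a pointed topos with the Tannakian $\Isom^\otimes$ of unipotent fiber functors via Morita theory and Koszul duality. And for the mixed Tate refinement, your appeal to ``full faithfulness of the \'etale realization on mixed Tate motives'' is not the mechanism: the paper instead uses the existence of Levine's t-structure extended to cellular motives (\S\ref{tstrsec}) together with the $H^\bullet_t$-version of concentration, which together make $f_3$ well-defined already on the motivic side, so that one can land in $H^1(\pi_1^{\TM}(Z),\pi_1^\un(X))$ rather than only in local Galois cohomology. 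In short, the inclusions are formal once the diagram exists, but proving that the diagram exists requires precisely the connectedness, concentration, and comparison theorems you did not identify.
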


\noindent
See theorem \ref{factbig} for the precise statement. 

\segment{}{Review of Chen's theorem}

In order to explain what the phrases in subscript mean (including the subscript ``Kim''), we first recall Chen's theorem, following Wojtkowiak's account \cite{Wojtkowiak}, embellished somewhat by a more homotopical language. We start by recalling a homotopy-theoretic approach to the topological fundamental group. Let $\Xx:= X(\CC)$ and let
\[
pt \xto{x} \Xx \xfrom{y} pt
\]
be two points of $\Xx$. Then there's a formula from topology:
\[
x \times_\Xx^{ho} y = {_yP_x},
\]
the space of paths $x \to y$, and
\[
_y\pi_x = \pi_0({_yP_x}),
\]
the space of homotopy classes of paths. In this description, the topological fundamental group has a natural de Rham analog. Let $\Om^\bullet(X_\QQ)$ denote the \textit{de Rham dga} associated to $X_\QQ$ (this is the derived global sections of the sheaf of dga's $\Om^\bullet_{X_\QQ/\QQ}$ in a suitable sense, c.f. Olsson \cite[8.1]{OlssonBar}), and assume $x$, $y$ come from rational points
\[
\Spec \QQ \xto{x} X \xfrom{y} \Spec \QQ.
\]
Then $x$, $y$ induce augmentations
\[
\QQ \xfrom{x^*} \Om^\bullet(X_\QQ) \xto{y^*} \QQ.
\]
Let
\[
_yB_x^\dR := \QQ \coprod^{ho}_{\Om^\bullet(X_\QQ)} \QQ,
\]
the homotopy pushout being taken in the model category of (commutative) dga's\footnote{
Throughout this article, all dga's are assumed to be commutative. 
}. Then
$
_yA_x^\dR := H^0({_yB_x^\dR})
$
has the structure of a commutative $\QQ$-algebra, and there are \emph{path cocomposition maps}
\[
{_zA_y^\dR}\otimes{_yA_x^\dR} \from {_zA_x^\dR}.
\]
These give
$
\pi_1^\dR(X,x): = \Spec {_xA_x^\dR}
$
the structure of a proalgebraic $\QQ$-group (which turns out to be prounipotent, c.f. theorem \ref{21d}), and they give
$
{_y\pi_x^\dR} := \Spec {_yA_x^\dR}
$
the structure of a torsor. Chen's theorem states that the $\QQ$-group $\pi_1^\dR(X,x) \otimes \CC$ is the complex prounipotent completion of $\pi_1(\Xx,x)$.

\ssegment{}{}
For concreteness, we outline the construction of an explicit map
\[
_y\pi_x \to {_y\pi_x^\dR}(\CC)
\]
in terms of iterated integrals (and recall some googlable key-words along the way). This material will not be used in the sequel. The \emph{Cosimplicial model of the path space} is given by
\[
_yP_x^\bullet(X)= 
\xymatrix{
pt \ar@<+.5ex>[r]^x \ar@<-.5ex>[r]_y &
X \ar@<+.7ex>[r] \ar[r] \ar@<-.7ex>[r] &
X^2 
\ar@<+.9ex>[r] \ar@<+.3ex>[r] \ar@<-.3ex>[r] \ar@<-.9ex>[r]
&
\cdots
}
\]
with e.g.
\[
t \in X
\hspace{3ex}
\begin{matrix}
\mapsto (x,t) \\
\mapsto (t,t) \\
\mapsto (t,y).
\end{matrix}
\]
The \emph{reduced bar construction} is an explicit simplicial dga $_yB_x^{\dR, \bullet}$ with 
\[
_yB_x^{\dR, n} = \Om^\bullet_{/\QQ}(X_\QQ)^{\otimes n}
\]
which represents the derived push-out
$
_yB_x^\dR = \tot {_yB_x^{\dR, \bullet}}.
$
The K\"unneth formula gives rise to an equivalence of simplicial dga's
\[
\Om^\bullet_{/\QQ}({_yP_x^\bullet}) \xto{\sim} {_yB_x^{\dR, \bullet}}.
\]
This gives us an isomorphism
\[
\Spec H^0 \big( \tot \Om^\bullet_{/\QQ}({_yP_x^\bullet}) \big)
\xto{\sim}
{_y\pi_x^\dR}.
\]

We wish to construct a map
\[
{_y\pi_x} 
\xto{\Psi} 
\Big(
\Spec H^0 \big(\tot \Om^\bullet_{/\QQ}({_yP_x^\bullet}) \big)
 \Big)
(\CC)
\]
Regard the image of a piecewise differentiable path
\[
\ga:x \to y
\]
as a homomorphism
\[
\Psi\ga: 
H^0 \big(\tot \Om^\bullet_{/\QQ}({_yP_x^\bullet}) \big)
\to 
\CC.
\]
By the Grothendieck de Rham theorem, an element
\[
[\om] \in H^0 \tot \Om^\bullet{_yP^\bullet_x} \otimes \CC
\]
may be represented by a family
$
\om = (\om_i)_{i\in \NN}
$
with
$
\om_i \in \Om^i_{\m{smooth}} (X^i_\CC).
$
On the other hand, 
$
\ga: x \to y
$
gives rise to a piecewise differentiable simplex 
$
\ga^i \in C_i^{\Sing}(\Xx^i)
$
given by
\[
\xymatrix 
@ R=2ex @C=7ex
{
\RR^i
\\
\{0 \le t_1 \le \dots \le t_i \le 1 \}
\ar@{}[u] | \bigcup
\ar[r]^-{(\ga, \dots, \ga)}
&
X^i.
}
\]
We set
\[
\Psi\ga([\om]) = \sum_i \int_{\ga^i} \om_i.
\]

\segment{}{Motivic reconstruction of the unipotent fundamental group}
The point of departure for this project is the following observation. It is said that ``Motives cannot distinguish between points and cycles''. Kim's theory shows that the unipotent fundamental group, however, remembers enough about integral points to provide a classification relative to local points, the classification being complete whenever his conjecture is true (and there's a growing list of cases, c.f. \cite{nabsd}). The unipotent fundamental group, in turn, may be \emph{reconstructed} from the data
\[
\QQ \xfrom{y^*}
\Om^\bullet(X) \xto{x^*} \QQ,
\]
\emph{if we include the cup product}. This data should be in some sense motivic. So we retort that motives \emph{equipped with cup product} can distinguish points from cycles.

We fix a model $M(Z,\QQ)$ for the tensor triangulated category $\DA(Z,\QQ)$ of motivic complexes over $Z$ with $\QQ$ coefficients. We let $\Mdga(Z,\QQ)$ denote the category of commutative monoid objects in $M(Z, \QQ)$ and call its objects ``motivic dga's''. The familiar cohomological motives functor of Levine \cite{LevineMM}, Voevodsky \cite{VoevTrCat}, Ayoub \cite{AyoubSixI, AyoubSixII} and others, may be upgraded in an obvious way to a functor
\[
C = C^*_\mot(\;\cdot\;, \QQ): \m{Sm}_{/Z}^\m{op} \to \Mdga(Z,\QQ).
\]
Relying on work of White \cite{White} and Cisinski-D\'eglise \cite{CDHomAlg}, we show in Propositions \ref{1611290924} and \ref{lprop} that the expected classes of weak equivalences, fibrations and cofibrations endow $\Mdga(Z,\QQ)$ with the structure of a left proper cofibrantly generated model category such that the free algebra functor and the forgetful functor form a Quillen adjunction. 

In the setting of motivic dga's, integral points $x,y \in X(Z)$ give rise to augmentations
\[
\one \xfrom{y^*}
 C(X) \xto{x^*} \one.
\]
We may then form the homotopy pushout
\[
{_y B_x} = \one \coprod ^{ho}_{\hrat(X)} \one
\]
in $\Mdga(Z,\QQ)$. The resulting objects ${_y B_x}(X)$ are endowed with  \emph{path cocomposition} morphisms. We refer to ${_yB_x}$ as the \emph{coordinate ring of rational motivic paths from $x$ to $y$}, or just \emph{path ring} for short.\footnote{This construction provides a motivic source for an object whose  realizations have appeared in various places in the literature. For instance, a filtered-$\phi$ realization is constructed in Kim--Hain \cite{KimHain}. A fairly different construction which gives rise to an object which should play a similar role appears in Pridham \cite[\S4.5]{pridhamenhanced}.}

When $X$ is mixed Tate, we may take cohomology with respect to the motivic t-structure of Levine \cite{LevineVanishing} and compare the result with the unipotent fundamental group of Deligne-Goncharov \cite{DelGon}
\[
\pi^\un(X,x) = \Spec H^0_t \big( {_x B_x}(X) \big),
\]
a prounipotent group object of the category of mixed Tate motives which realizes to the prounipotent $\QQ$-completion of the Betti fundamental group $\pi_1\big(X(\CC), x \big)$ (depending on the choice of an embedding $K \subset \CC$), to the prounipotent $\Qp$-completion of the profinite \'etale fundamental group $\hat \pi_1(X_{\overline K}, x)$ for every prime $p$ of $Z$, and to the fundamental group of the category of unipotent connections on $X_{K}$.

\segment{factsimple}{Factorization of Kim's cutter}

This slightly different way of constructing the unipotent fundamental group leads to a natural factorization of ``Kim's cutter'' \cite{CKtwo}. We define a \emph{pseudo-cotorsor} for $B_x$ to be a commutative monoid  object $P$ of the triangulated tensor category of motives $\DA(Z, \QQ)$, together with a  morphism of monoid objects
\[
B_x \otimes^\LL P \from P
\]
which is coassociative, such that
$
B_x \otimes^\LL P \xfrom{\sim} P \otimes^\LL P.
$
A pseudo-cotorsor is a \emph{cotorsor} if it satisfies a certain nonemptyness condition; as we explain below, the right condition for us is the nonvanishing
$
H^0_\et(P_{\overline K}, \Qp) \neq 0.
$
In the mixed Tate setting, we restrict attention to cotorsors which are, in an appropriate sense \emph{cellular}. 

\ssegment{diasimple}{}
We let $\DMdga = \DMdga(Z,\QQ)$ denote the homotopy category of $\Mdga(Z,\QQ)$. To fix ideas let us restrict attention to the mixed Tate case, and for simplicity, let us assume that $Z \subset \Spec \ZZ$. We also fix a base point $x \in X(Z)$. Let $\pi_1^{\TM}(Z)$ denote the fundamental group of the category of mixed Tate motives. Let $\pi_1^{[n]}(X)$ denote the quotient of $\pi_1^\un(X)$ by the $n$th step in the descending central series.

In our construction of the motivic path ring ${_yB_x}$, we may replace the \emph{arithmetic endpoint} $y \in X(Z)$ with any \emph{motivic endpoint}
\[
\xi \in \Hom_{\DMdga}(CX, \one).
\]
This means that the natural map from points to cotorsors factors through the set $\Hom_{\DMdga}(CX, \one)$ of ``motivic points'' and leads to a diagram in which the square formed by Kim's cutter becomes the union of four smaller squares: 
\[
\xymatrix
@C= -8 ex
{
X(Z) \ar[rr] \ar[d]_{f_1} &&
		X(Z_p) \ar[d]^{f_1^p} \\
\Hom_{\DMdga}(\hrat(X), \one) 
		\ar[rr]^-{l_1} \ar[d]_{f_2} &&
		\Hom_{\DMdga}(\hrat(X_p), \one) \ar[d]^{f_2^p} \\
\Cotors_\cell (B_x(X)) \ar[d]_{f_3} \ar[rr]^-{l_2} &&
		\Cotors (B_x(X_p)) \ar[dr]^-{\al} \\
H^1(\pi_1^{\TM}(Z), \pi_1^\un(X)) \ar[dr] \ar[rrr]^-{l_3} &&&
	\pi_1^\dR(X)_\Qp \ar[d] \\ 
&
	H^1(\pi_1^{\TM}(Z)_\Qp, \pi_1^\un(X)^{[n]}_\Qp)
	 \ar[rr]_-{\m{Re}_{\Qp}} &&
			\pi_1^\dR(X)^{[n]}_\Qp
}
\]
In terms of this diagram, we define
\[
X(Z_p)_{\mbox{motivically global}}: = (f_1^p)\inv(\Im l_1)
\]
as the preimage in $X(Z_p)$ of the image of $l_1$, we define
\[
X(Z_p)_{\mbox{pathwise motivically global}}
\]
as the preimage in $X(Z_p)$ of $l_2$, and we define
\[
X(Z_p)_{\mbox{pathwise motivically global up to motivic homotopy}}
\]
as the preimage in $X(Z_p)$ of the image of $l_3$. The sets $X(Z_p)_n$ which define $X(Z_p)_\m{Kim}$ are different: $\m{Re}_\Qp$ is a map of $\Qp$-varieties, and we take \emph{scheme-theoretic image}, remembering nilpotent structure (in our opinion) when pulling back to $X(Z_p)$.

\ssegment{}{}
If we wish to define the map $f_2$ in the diagram of segment \ref{diasimple} by associating to an augmentation
\[
\xi: C(X) \to \one
\]	
the homotopy pushout 
\[
f_2(\xi) = {_\xi B_x}
= \hocolim \left(
\one \xfrom{\xi}
C(X) \xto{x^*} \one
\right)
\]
in $\Mdga$, we need to know that $H^0_\et \big( ({_\xi B_x})_{\overline K}, \Qp \big) \neq 0$. In more intuitive terms, we need to know that every two motivic points are connected by a $p$-adic \'etale path. 

Continuing down the left column, if we wish to define the map $f_3$ by associating to a cotorsor $P$ the torsor
\[
f_3(P):= \Spec H^0_t(P)
\]
of \emph{motivic homotopy classes of paths}, we need to know that $H_t^i(P) = 0 $ for $i<0$, in other words, that $P$ is concentrated in negative (homological) degrees. We also need to know that $H^0_t(P) \neq 0$ (although it will not follow that every two motivic points are connected by a motivic path, since there may be no morphisms $\QQ(0) \to \Spec H^0_t(P)$). Since $X$ is affine (this assumption now being essential), we have, moreover, $H^i_t(P) = 0$ for $i>0$, so $P$ is concentrated in degree $0$. These four facts follow from the \emph{connectedness} and \emph{concentration} theorems. 

\begin{sthm}[Connectedness]
\label{ctd} 
Let $Z$ be a Dedekind scheme whose function field $K$ has characteristic zero (an open integer scheme in our applications). Let $X$ be a smooth scheme over $Z$ such that
\[
H^0_\et(X_{\overline K}, \Qp) = \Qp
\]
(equivalently, $X$ is generically geometrically connected). Let
\[
\om, \eta \in \Hom_{\DMdga}(CX, \one)
\]
be augmentations. We assume $X$ is an affine curve. Then
\[
H^0_\et \big( {_\eta B_\om}(X)_{\overline K} , \Qp \big) \neq 0.
\]
If moreover $Z$ obeys Beilinson-Soul\'e vanishing (okay for $Z$ an open integer scheme) and $X$ is mixed Tate, then the same holds for the cohomologies $H^i_t$ associated to the mixed Tate t-structure. 
\end{sthm}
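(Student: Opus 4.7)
The plan is to reduce to a computation of the two-sided bar construction for the $p$-adic étale realization $A:=C^*_\et(X_{\overline K}, \Qp)$ of $CX$, exploiting that on a smooth affine curve this cdga is formal with trivial cup product in top degree. First I would invoke compatibility of the homotopy pushout defining ${_\eta B_\om}$ with the étale realization functor $\Ee_\et$ (expected to preserve homotopy colimits, e.g.\ as a left Quillen functor) to obtain
\[
\Ee_\et\bigl({_\eta B_\om}(X)_{\overline K}\bigr) \;\simeq\; \Qp \coprod\nolimits^{ho}_A \Qp,
\]
with the two arrows induced by $\om$ and $\eta$.

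Next I would use the affine-curve hypothesis to pin down $A$. Its cohomology is concentrated in degrees $0$ and $1$, with $H^0(A)=\Qp$ by assumption and $H^2(A)=0$, so that the cup product $H^1(A)\otimes H^1(A)\to H^2(A)=0$ vanishes. For cohomological-dimension reasons (no room for obstructions above degree $1$), $A$ is formal, i.e.\ quasi-isomorphic to the trivial-extension cdga $\Qp\oplus V[-1]$, where $V := H^1(X_{\overline K}, \Qp)$ and $V\cdot V=0$. On this formal model both augmentations $\om^\ast$ and $\eta^\ast$ are forced to coincide with the canonical one: they are the identity on $H^0$, and they must vanish on $V[-1]$ for pure degree reasons (the target $\Qp$ sits only in degree $0$).

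I would then compute the bar construction directly on this formal model. With $A^+=V[-1]$, the shifted augmentation ideal $A^+[1]=V$ sits in cohomological degree $0$, and both the internal differential (zero by formality) and the bar differential (zero since $A^+\cdot A^+=0$ and both augmentations vanish on $A^+$) are trivial. The reduced bar complex therefore collapses to
\[
\Qp \coprod\nolimits^{ho}_A \Qp \;\simeq\; T(V),
\]
the tensor algebra on $V$ in cohomological degree $0$; its $H^0$ contains the scalar summand $\Qp$, which already suffices. The mixed Tate statement is obtained by running exactly the same argument inside $\DTM(Z)$: Beilinson--Soulé vanishing provides the motivic t-structure, $CX$ for $X$ a mixed Tate affine curve is concentrated in $H^i_t$ for $i\in\{0,1\}$ with $H^0_t(CX)=\one$, and the same formality and bar computation yield a nonzero $H^0_t$.

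The main obstacle I anticipate is making the two foundational compatibilities fully rigorous in Guzman's still-under-construction framework: one needs that the $p$-adic étale realization and the motivic-t-structure truncation both commute with the homotopy pushout that defines ${_\eta B_\om}$, and one needs to promote the cohomological concentration in degrees $\le 1$ from a statement in the triangulated derived category to genuine cdga-formality at the level of realizations, so that the bar computation is literally applicable.
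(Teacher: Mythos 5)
Your overall strategy is the same as the paper's: realize to the $p$-adic \'etale cdga $A = C^*_\et(X_{\overline K}, \Qp)$ using that the realization functor preserves homotopy pushouts, use the affine-curve hypothesis to pass to a formal model concentrated in degrees $0$ and $1$ with trivial degree-one products, and compute the bar construction to obtain a tensor algebra in degree zero. Two points deserve attention.

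First, there is a genuine gap in the step where you say that on the formal model $\Qp \oplus V[-1]$ the augmentations $\om^*$, $\eta^*$ ``are forced to coincide with the canonical one \dots for pure degree reasons.'' Your degree argument applies to \emph{strict} dga morphisms out of $\Qp \oplus V[-1]$, but the augmentations you are handed live in $\pi_0\Hom_{\DDdga}(A, \Qp)$, that is, they are derived morphisms, computed via a cofibrant replacement of the source. Since $\Qp \oplus V[-1]$ is not cofibrant, the degree argument on it does not by itself constrain the set of derived morphisms. The paper closes this gap by invoking the explicit Bousfield--Guggenheim cofibrant replacement $L_{'C}$ and observing that it, too, is concentrated in nonnegative cohomological degrees with $L_{'C}^0 = \Qp$; only then does the degree argument show $\Hom_{\dga}(L_{'C}, \Qp)$ is a singleton, and hence that the derived augmentations are uniquely represented by strict ones. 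Without some version of this control on a cofibrant replacement, the lifting of the derived diagram to a strict diagram in $\dga(\Qp)$ --- which your bar computation tacitly requires --- is unjustified.

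Second, you propose to handle the mixed Tate statement by running the same formality-plus-bar argument internally in $\DTM(Z)$. The paper takes a different and arguably safer route: it first proves a faithfulness lemma --- a nonzero object of $\Ind\TM(Z,\QQ)$ has nonzero image under $Re^{\heartsuit}_{\overline K,\et}$ --- and combines this with the compatibility $Re^{\heartsuit}_{\overline K,\et}\circ H^i_t = H^i \circ Re_{\overline K,\et}$ to reduce the entire mixed Tate assertion to the \'etale computation already performed. This sidesteps having to build a formal model and an explicit cofibrant replacement inside the motivic dga category, where such explicit control is considerably harder to come by. Your internal approach is not obviously wrong, but you would need to supply the motivic analogues of the formality and of the Bousfield--Guggenheim-type control, neither of which is established in the framework you are invoking.
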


\begin{sthm}[Concentration]
\label{ctr}
Let $Z$ be a Dedekind scheme whose function field $K$ has characteristic zero. Let $X$ be a smooth affine generically geometrically connected curve over $Z$, and let $\om, \eta \in \Hom_{\DMdga}(CX, \one)$ be augmentations. Then
\[
H^i_\et \big( {_{\eta}B_\om}(X)_{\overline K}, \Qp \big) = 0
\]
for $i \neq 0$. If moreover $Z$ obeys Beilinson-Soul\'e vanishing and $X$ is mixed Tate, then the same holds for the cohomologies $H^i_t$ associated to the mixed Tate t-structure. 
\end{sthm}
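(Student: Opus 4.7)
The plan is to identify ${_\eta B_\om}(X)$ after realization with a derived tensor product of augmented cdga's and then exploit the low cohomological dimension of an affine curve. By construction, ${_\eta B_\om}(X) = \one \coprod^{ho}_{CX} \one$ is a homotopy pushout in $\Mdga$ along $\om$ and $\eta$. Since realization preserves homotopy pushouts of commutative monoids, \'etale realization followed by base change to $\overline K$ yields
\[
{_\eta B_\om}(X)_{\overline K} \;\simeq\; \Qp \otimes^L_A \Qp
\]
in the category of augmented cdga's over $\Qp$, where $A := C^*_\et(X_{\overline K}, \Qp)$ carries the augmentations induced by $\om$ on the left and $\eta$ on the right.

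Because $X_{\overline K}$ is a geometrically connected smooth affine curve, the cohomology of $A$ lives in two degrees only: $H^0(A) = \Qp$, $H^1(A) = V$ for some finite-dimensional $\Qp$-vector space, and $H^i(A) = 0$ for $i \geq 2$. In particular the induced product $V \otimes V \to H^2(A)$ is zero for degree reasons, and both $\om$ and $\eta$ induce on $H^*(A)$ the unique augmentation killing $V$. To compute $\Qp \otimes^L_A \Qp$, I would use the normalized two-sided bar complex $B_\bullet(\Qp, A, \Qp)$ with $B_n = \overline A^{\otimes n}$ (where $\overline A$ is the augmentation ideal, which is common to $\om$ and $\eta$ on cohomology). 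Filtering the total complex by simplicial degree gives a spectral sequence
\[
E_1^{p, q} = H^q\bigl(\overline A^{\otimes p}\bigr) \;\Longrightarrow\; H^{q - p}\bigl(\Qp \otimes^L_A \Qp\bigr).
\]
By the K\"unneth formula over $\Qp$, the left-hand side equals $V^{\otimes p}$ when $q = p$ and vanishes otherwise, so $E_1$ is supported on the diagonal $q = p$. Each $d_r$ lands in an off-diagonal group, which is zero; the spectral sequence therefore collapses at $E_1$, and the abutment is concentrated in total degree $q - p = 0$. Convergence is automatic because in every nonzero total degree the $E_1$-page vanishes identically.

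The mixed Tate variant follows by the same mechanism, replacing \'etale cohomology with the cohomology functor $H^*_t$ attached to Levine's t-structure on $\DMT(Z)$. Beilinson-Soul\'e vanishing together with the mixed Tate assumption ensure that $H^i_t(CX) = 0$ outside $\{0, 1\}$: the affine curve has cohomological dimension one, while Beilinson-Soul\'e kills negative Ext's of Tate twists. Products in $H^*_t(CX)$ above degree one therefore vanish for degree reasons, and the motivic K\"unneth formula, valid in the mixed Tate setting by tensor-compatibility of the t-structure, produces the same diagonal collapse in the motivic bar spectral sequence.

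The main obstacle is verifying that realization really preserves the homotopy pushout defining ${_\eta B_\om}$, i.e.\ that \'etale (resp.\ motivic) realization sends $\one \otimes^L_{CX} \one$ to the corresponding derived tensor product of cdga's. This reduces to K\"unneth-type compatibility of realization with tensor products of motivic complexes, which for the \'etale realization is standard and in the motivic t-structure context is part of the mixed Tate setup. A secondary but routine matter is tracking the conventions (simplicial versus internal versus total degree), but no subtlety intervenes there.
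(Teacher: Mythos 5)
Your proof is essentially correct and takes a genuinely different route from the paper's. The paper first argues, via an explicit formal model $'C \simeq C^*_\et(X_{\overline K})$ and the Bousfield--Guggenheim cofibrant replacement, that the set of \emph{derived} augmentations $\pi_0\Hom_{\DDdga}(C^*_\et, \Qp)$ is a singleton; it then computes the normalized (``reduced'') bar complex of $'C$ directly, observing that the bar differential vanishes identically because products of degree-one elements are zero in $'C$ and any augmentation kills $'C^1$ for degree reasons. You instead run the Eilenberg--Moore spectral sequence of the bar filtration without formalizing, using the K\"unneth formula to place the $E_1$-page on the diagonal $q=p$ and then collapsing for degree reasons. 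What the paper's route buys is a somewhat stronger statement --- uniqueness of the (derived) augmentation --- which the authors exploit in the remark following the proof (existence of canonical \'etale/de Rham paths); your route is shorter and does not need the explicit cofibrant replacement. Both hinge on the same geometric input: $H^i_\et(X_{\overline K},\Qp)=0$ for $i\geq 2$ because $X$ is an affine curve.

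Two points you gloss over deserve more care. First, you phrase $\overline A$ as ``the augmentation ideal, which is common to $\om$ and $\eta$ on cohomology,'' but the kernels of $\om^*$ and $\eta^*$ need not coincide on the chain level; the correct object for the normalized two-sided bar complex is the cokernel of the unit $\Qp\to A$, which is canonically independent of the two augmentations (and noncanonically identified with either augmentation ideal). Second, the assertion that ``convergence is automatic because in every nonzero total degree the $E_1$-page vanishes identically'' is not in general valid: a spectral sequence can have $E_\infty = 0$ in a given total degree without the abutment vanishing there if the filtration fails to be complete or Hausdorff. Here convergence does hold --- one can either replace $A$ by a bounded formal model first (which is what makes the paper's argument trivially convergent), or invoke the standard convergence of the Eilenberg--Moore spectral sequence for a connected cdga over a field --- but the justification should be spelled out rather than asserted. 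Finally, the paper spends real effort showing that the realization functor $\RR e_{\overline K,\et}^a$ preserves the homotopy pushout defining ${_\eta B_\om}$, via a conservativity/sifted-colimits argument on the level of monoid objects; you flag this as ``the main obstacle'' but then call it ``standard,'' which undersells a step the paper treats as substantive.
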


A priori, the map $f_2$ sends augmentations to pseudo-cotorsors; the connectedness theorem ensures that these obey our nonemptyness condition. As we show in segment \ref{1611291401}, the concentration theorem ensures that $f_3$ sends pseudo-cotorsors to pseudo-torsors. Our nonemptyness condition on pseudo-cotorsors then implies that the result is in fact a torsor and not just a pseudo-torsor.

As a direct corollary of the concentration theorem, we find,

\begin{scor}[Pathwise motivically global theorem]
\label{pathglob}
Suppose $X$ is a smooth mixed Tate affine curve with \'etale divisor at infinity over an open integer scheme $Z$, and let $p \in Z$ be a closed point. Then the inclusion
\[
\renewcommand\arraystretch{0.9}
X(Z_p)_
{
\begin{matrix*}[l]
\mbox{pathwise} \\
\mbox{motivically} \\
\mbox{global}
\end{matrix*}
}
\subset 
X(Z_p)_
{
\begin{matrix*}[l]
\mbox{pathwise} \\
\mbox{motivically global up to} \\
\mbox{motivic homotopy}
\end{matrix*}
}
\]
is a bijection. 
\end{scor}

\segment{unpalb}{}
Returning to the diagram of segment \ref{diasimple}, the pattern of symmetry between the global column on the left and the local column on the right is broken after the third row. There are several reasons for this. For one, we currently lack a t-structure on the triangulated category of mixed Tate motives over the local scheme $Z_p$. This leads us to replace nonabelian motivic cohomology with the nonabelian syntomic cohomology
\[
H^1\big( 
\pi_1^{\m{MTF\phi}}(Z_p),
 \pi_1^\un(X)^{F\phi}
\big).
\]
Here $\pi_1^{\m{MTF\phi}}(Z_p)$ denotes the fundamental group of the category of mixed Tate filtered $\phi$ modules \cite{ChatUnv} and $\pi_1^\un(X)^\m{F\phi}$ is the filtered $\phi$ realization of the unipotent fundamental group. An appropriate realization functor defined on the triangulated level gives us a map
\[
\tag{*}
\Cotors(B_{x_{p}}) \to 
H^1\big( 
\pi_1^{\m{MTF\phi}}(Z_p),
 \pi_1^\un(X)^{F\phi}
\big).
\] 
The nonabelian syntomic cohomology is then isomorphic to $\pi_1^\dR(X)/F^0$, the Tannakian fundamental group of the category of unipotent vector bundles with integrable connection on $X_{K_p}$ modulo the 0th step in its Hodge filtration. By composing this isomorphism with the map (*) we obtain a map 
\[
\al:X(Z_p) \to \pi_1^\dR(X_{K_p})/F^0
\]
studied for instance by Kim \cite{kimii} and known as the \emph{unipotent $p$-adic Albanese map}. (This map specializes to the one labeled $\al$ in the diagram, since in the mixed Tate case we have $F^0 = 0$.) It's an important feature of Kim's construction (and another reason for breaking the symmetry) that this map is given in coordinates by $p$-adic iterated integrals \`a la Besser--Coleman \cite{Besser, Coleman} which can often be computed algorithmically.

\segment{}{}
This relationship with Coleman integrals depends on a certain comparison result. For $x,y\in X(Z_p)$,  the Frobenius action on the realization of the motivic path ring $_yB_x$ should be equal to the one induced by the Frobenius action on the the category of unipotent connections via the equivalence of categories of Chiarelotto--Le Stum \cite{ChiarLeStum} between the latter category and the category of unipotent isocrystals on the special fiber. As far as we can tell, the philosophically correct proof of this comparison result would proceed by applying theorem \ref{21d} below to certain topoi associated to an appropriate rational homotopy theory of $p$-adic mixed Hodge modules. While the affine stacks of To\"en \cite{ToenAffine} and the methods of Olsson \cite{OlssonTowards} and Pridham \cite{PridhamGalois} may provide a shortcut, we have elected to cheat by going to the $p$-adic \'etale realization (refined somewhat by the recent contribution of Scholze-Bhatt \cite{ScholzeProet}). The commutativity of the diagram above then follows by concatenating the triangle of unipotent $p$-adic Hodge theory \cite{OlssonTowards, kimii} with the diagram of segment \ref{diabig} below.

Our approach to the $p$-adic \'etale realization should be compared with that of Deligne--Goncharov \cite{DelGon}. While the latter is based on an explicit realization of the homotopy pushout via the bar construction and on an elaborate computation in constructible cohomology due to Beilinson, we turn to Morita theory and Koszul duality to understand the unipotent fundamental groupoid of an arbitrary topos. Theorem \ref{21d} (mentioned above) states the result of this investigation. 

\segment{}{}
This article is foundational in nature. In much of what follows, we assemble theorems of others using tried and true techniques in order to set the stage for a new avenue of investigation into Kim's conjecture. Needless to say, one of its more ambitious goals is to go in the direction of Kim's conjecture; we make no claims as to our level of optimism regarding the prospects of an actual proof. Short of a proof however, we do expect to find, in each of the sub-inclusions of theorem \ref{thmfact}, philosophical evidence (as opposed to numerical evidence) for the conjecture. Indeed, each of the four inclusions presents itself together with an obvious collection of loose threads at which to tug. As an example, the conjectured implication ``motivically global $\Rightarrow$ global'' leads to the following expectation. Let $X$, $Y$ be schemes over $Z $, let $p$ be a closed point of $Z$, and let's assume for simplicity that $Z\subset \Spec \ZZ$. Then we may consider the square
\[
\xymatrix{
\Hom(Y,X) \ar[r] \ar[d] &
 \Hom(Y_{Z_p}, X_{Z_p}) \ar[d]^f
 \\
\Hom_{\DMdga}(CX,CY) \ar[r]_-l &
\Hom_{\DMdga}(CX_{Z_p}, CY_{Z_p}). 
}
\]
We should expect that for \emph{many} schemes $X$, $Y$, we have 
\[
\tag{*}
\Hom(Y,X) = f\inv (\Im l).
\]
For instance, since it's reasonable to expect Kim's conjecture to extend (via a suitably relative formulation) to morphisms of anabelian schemes over $Z$, we should expect the equality (*) to hold at least when $X$, $Y$ are anabelian. In particular, we should expect an equality whenever $X$, $Y$ are finite \'etale over $Z$, or suitable models of hyperbolic curves. Moreover, when $X$, $Y$ are proper, we may replace $Z$ by $\Spec \QQ$. So if after this grandiose generalization we again specialize to the case $Z = \Spec \QQ$, $X = Y = \Spec k$, $k$ a finite extension of $\QQ$, $p$ a prime at which $k$ is totally split, we're led to the expectation that 
\[
\Aut_{\DMdga}(CX)= \Gal(k/\QQ).
\]
Our proof of this simple statement in motivic rational homotopy theory in \cite{nmh} may be regarded as bit of evidence for a piece of Kim's conjecture. 

\segment{}{}
In this preliminary work, we restrict attention to the \emph{mere set} of strict cotorsors in the homotopy category. In doing so, we ignore a geometric / homotopical structure which is naturally available. Indeed, there's a moduli stack of weak cotorsors together with coherent higher homotopies on the model category or infinity category level, which stands above and behind our set of cotorsors, much in the same way as the local Selmer variety stands behind the nonabelian $p$-adic \'etale cohomology set in Kim's theory. Another part of our program will be devoted to a study of functions on the stack of cotorsors and their pullbacks to $X(Z_p)$. One possible outcome would give rise to an interesting class of functions, generalizing the iterated integrals of Besser--Coleman. The opposite outcome, however --- that the resulting functions are no different than the classical iterated integrals, may enable us to use the stack of cotorsors to \emph{do} Chabauty-Kim theory on the triangulated, motivic level. This could include applications to integral points on varieties with abelian fundamental group, or to a ``Kim-theoretic'' proof of Faltings' theorem via an appropriate lift of Poitou-Tate duality (c.f. Schlank--Stojanoska \cite{SchlankGalDu, SchlankPT}), circumventing the $p$-adic regulator isomorphism conjecture of Bloch--Kato (c.f. Kim \cite{kimii}).

\subsection*{Acknowlegements}
First and foremost we would like to thank Marc Levine for his interest and his encouragement, and for many helpful conversations devoted to the topic of this paper. We would like to thank Federico Binda, Shane Kelly, Adeel Khan, Lorenzo Mantovani, Brad Drew, Fr\'ed\'eric D\'eglise, Minhyong Kim, Joseph Ayoub, and Tyler Lawson for helpful conversations and correspondence.  We would like to thank Martin Gallauer for pointing out a mistake in an earlier draft. The first author would like to thank Jochen Heinloth for his hospitality as his host at the University of Duisburg-Essen, and for his advice regarding exposition. Finally, we thank the referee for helpful comments. 

\section{Motivic dga's}

The results of this section developed in conversations with Marc Levine and Gabriela Guzman, whose work on this topic ultimately diverged from ours.

\segment{1611290922}{}

 Let $Z$ be a Dedekind scheme, let $\Sm_Z$ denote the category of smooth schemes over $Z$, let $\Cpx \PSh(\Sm_Z, \QQ)$ denote the category of complexes\footnote{Here and below, complexes are not assumed to be bounded.} of presheaves of $\QQ$-vector spaces. If $X$ is a smooth scheme over $Z$, we commit the usual abuse by denoting the associated presheaf again by $X$. Continuing with this abuse, we let $X\otimes \QQ$ denote the presheaf whose value on a smooth $Z$-scheme $Y$ is the $\QQ$-vector space of formal linear combinations of elements of $\Hom_Z(Y,X)$.

 We let
 \[
 \MM^\eff(Z,\QQ)
 =
 \big(
 \Cpx \PSh (\Sm_Z,\QQ),
 (\AA^1, \et)\mbox{-local}
 \big)
 \]
 denote the category of complexes of presheaves, endowed with the ($\AA^1$, \'et)\emph{-local model structure}. We recall briefly what this means, referring to \cite[\S3]{AyoubEtale} for details. We start with the projective model structure, in which the weak equivalences are defined to be the quasi-isomorphisms and the fibrations are defined to be the degree-wise surjections. We left-localize with respect to the class of morphisms which induce isomorphisms of cohomology presheaves after sheafification with respect to the \'etale topology. We then left localize with respect to the class of morphisms 
\[
\AA^1_X\otimes \QQ[n] 
\to
X \otimes \QQ[n] 
\]
for $X \in \Sm_Z$ and $n\in \ZZ$.
 
We set 
\[
T:= 
\frac{\PP^1_Z \otimes \QQ[0] }
 {\infty \otimes \QQ[0]}.
\]

\segment{bubble}{}
We let $\MM(Z, \QQ)$ denote the category of symmetric $T$-spectra in $\MM^\eff(Z,\QQ)$ as in Hovey \cite[\S7]{HoveySpectra}, or, equivalently but more simply, the category of commutative $T$-spectra of Ayoub \cite[\S4]{AyoubComodules}, endowed with the \emph{stable} $(\AA^1,\et)$-local model structure, as in Ayoub \cite[\S3]{AyoubEtale}. There's a natural left Quillen functor 
\[
\Sus^0_T: \MM^\eff(Z,\QQ) \to \MM(Z,\QQ),
\]
and we set
\[
\QQ(0) := \Sus^0_T(\QQ[0]).
\]
This makes $\MM(Z,\QQ)$ into a monoidal model category with unit object $\QQ(0)$ and tensor product given by the natural (levelwise) tensor product. The \emph{triangulated category of \'etale motives over $Z$} is given by
\[
\DA^\et(Z,\QQ) := \Ho \MM(Z,\QQ).
\]
We will generally drop the decoration $`\et'$.

The \emph{homological motives functor}
\[
C_*^\m{DA} =
C_*^\m{DA}(-,\QQ)
: \Sm_Z \to \MM(Z,\QQ) \to \DA(Z,\QQ)
\]
is given on an object $X\in \Sm_Z$ by
\[
C_*^\m{DA}(X,\QQ) = 
\Sus^0_T(X\otimes \QQ[0]).
\]
The \emph{Tate object} is given by $\QQ(1):= \Sus^0_T(T[-2])$.

\segment{cd}{}
The Bousfield localizations that intervene in the above construction are made explicit via descent-theoretic techniques in Cisinski-D\'eglise \cite{CDHomAlg}. We review their construction.

\ssegment{cd1}{}
We let $\Aa$ denote an abelian category endowed with a closed symmetric monoidal structure. We let $\Cpx \Aa$ denote the category of (always unbounded) complexes in $\Aa$. We let $K(\Aa)$ denote the \textit{homotopy category} in the sense of standard homological algebra: the objects are those of $\Cpx \Aa$ and the morphisms are equivalence classes with respect to chain-homotopy. We let $D(\Aa)$ denote the derived category: the objects are again those of $\Cpx \Aa$, but the category has been localized with respect to quasi-isomorphisms. 

 If $E$ is an object and $n$ is an integer, we define
\[
S^nE:= E[n]
\]
and 
\[
D^{n+1}E = ( \cdots \to 0 \to E \xto{Id} E \to 0 \to \cdots)
\]
in cohomological degrees $n-1$, $n$, so that there's a natural map
\[
\Bdry(E,n):S^nE \to D^{n+1}E.
\]

\ssegment{cd2}{}
Fix an essentially small collection $\Gg$ of objects of $\Aa$ and an essentially small collection $\Hh$ of complexes in $\Aa$. A morphism in the category $\Cpx\Aa$ of complexes in $\Aa$ is a \emph{$\Gg$-cofibration} if it is contained in the smallest class of maps in $\Cpx\Aa$ closed under pushouts, transfinite compositions and retracts, and containing the inclusions $\Bdry(E,n)$ for $n\in \ZZ$ and $E\in \Gg$. A complex $C$ is \emph{$\Gg$-cofibrant} if
\[
0\to C
\] 
is a $\Gg$-cofibration; \emph{$\Gg$-local} if for any $E$ in $\Gg$ and $n\in \ZZ$, the map
\[
\Hom_{K(\Aa)}(E[n], C) \to \Hom_{D(\Aa)}(E[n], C)
\]
is bijective; and \emph{$\Hh$-flasque} if for all $n\in\ZZ$ and $H\in\Hh$, 
\[
\Hom_{K(\Aa)}(H, C[n])=0.
\]
The pair $(\Gg, \Hh)$ forms a \emph{descent structure on $\Aa$} if
\begin{description}
\item[DS1] every element of $\Hh$ is $\Gg$-cofibrant and acyclic, and
\item[DS2] every $\Hh$-flasque complex is $\Gg$-local.
\end{description}
A descent structure is said to \emph{flat} if 
\begin{description}
\item[FlDS1] for any $X \in \Gg$, and any quasi-isomorphism of complexes $f$, the morphism of complexes $id_X \otimes f$ is again a quasi-isomorphism, and
\item[FlDS2] $\Gg$ contains the unit object $\one$ and is essentially closed under tensor product.
\end{description}

\ssegment{cd3}{}
Let $\Tt$ be a set of complexes in $\Aa$. We say that $\Tt$ is a \emph{flat localizing family} if
\begin{description}
\item[FlLF1] the elements of $\Tt$ are $\Gg$-cofibrant, and
\item[FlLF2] for all $E \in \Gg$ and $T \in \Tt$, $E \otimes T \in \Tt$.
\end{description}

\ssegment{cd4}{}
In addition to the data ($\Aa$, $\Gg$, $\Hh$, $\Tt$) above, we fix a $\Gg$-cofibrant complex $S$. We then have the category $\Spt_S^\fk{S}(\Aa)$ of symmetric $S$-spectra in $\Cpx(\Aa)$. We construct a model structure on $\Spt_S^\fk{S}(\Aa)$ in four steps, following the usual rubric. 

\begin{enumerate}
\item
We start with the \emph{$\Gg$-projective model structure} on $\Cpx(\Aa)$: the equivalences are defined to be the quasi-isomorphisms, and the cofibrations are defined to be the $\Gg$-cofibrations \cite[Theorem 2.5]{CDHomAlg}. 

\item
We define the \emph{$\Tt$-local $\Gg$-projective} model structure on $\Cpx \Aa$ in terms of the $\Gg$-projective structure as follows. We define a morphism of complexes
\[
X \to Y
\]
to be a $\Tt$-equivalence if for any $\Hh \cup \Tt$-flasque complex $K$, the map
\[
\Hom_{D(\Aa)}(Y,K) \to \Hom_{D(\Aa)}(X,K)
\]
is bijective. For the $\Tt$-local $\Gg$-projective structure we take $\Tt$-equivalences for weak equivalences, and $\Gg$-cofibrations for cofibrations. This model structure is equal to the Bousfield localization of the $\Gg$-projective model structure by the class of morphisms
\[
0 \to T[n]
\]
for $T \in \Tt$ and $n\in \ZZ$ \cite[Prop 4.3]{CDHomAlg}. We will call the associated fibrations \emph{$\Tt$-local $\Gg$-projective fibrations}. 

\item
We define the \emph{$\Tt$-local $\Gg$-projective structure} on $\Spt_S^\fk{S}(\Aa)$ by taking levelwise $\Tt$-equivalences for weak equivalences, and levelwise $\Tt$-local $\Gg$-projective fibrations for fibrations. We then have the associated homotopy category
\[
D_\Tt(\Aa,S) := \Spt_S^\fk{S}(\Aa)[\{\Tt\mbox{-equivalences}\}\inv].
\]

\item
Combining Proposition 4.3 with segment 7.8 of \cite{CDHomAlg}, we define an $S$-spectrum $E$ to be an \emph{$\Om^\infty$-spectrum} if it is fibrant for the $\Tt$-local $\Gg$-projective structure on $\Spt_S^\fk{S}(\Aa)$ and if, moreover, for each $n \in \NN$, the bonding map
\[
E_n \to \underline \Hom_{\Cpx\Aa}(S, E_{n+1})
\]
is a $\Tt$-equivalence. We define a morphism of symmetric $S$-spectra 
\[
A \to B
\]
to be a \emph{stable $\Tt$-equivalence} if for every $\Om^\infty$-spectrum $E$, the induced map
\[
\Hom_{D_\Tt(\Aa,S)}(A,E) \from \Hom_{D_\Tt(\Aa,S)}(B,E)
\]
is bijective. A morphism of symmetric $S$-spectra is a \emph{stable $\Gg$-cofibration} if it is a cofibration for the $\Tt$-local $\Gg$-projective model structure. We define the \emph{stable $\Tt$-local $\Gg$-projective model structure} on $\Spt_S^\fk{S}(\Aa)$ by taking the stable $\Tt$-equivalences for weak equivalences and the stable $\Gg$-cofibrations for cofibrations. 
\end{enumerate}

\ssegment{dc5}{}
Recall the \emph{monoid axiom} on a monoidal model category $M$ of Schwede-Shipley \cite{SchwedeShipley}: every 
\[
\big((\mbox{trivial cofibration}) \otimes M \big) \mbox{-cell object}
\]
is a weak equivalence.

\begin{ssthm}[7.13 and 7.24 of Cisinski-D\'eglise \cite{CDHomAlg}]
\label{dc6}
Let $\Aa$ be an abelian category endowed with a flat descent structure $(\Gg, \Hh)$, a flat localizing family $\Tt$, and a $\Gg$-cofibrant complex $S$. Then the stable $\Tt$-local $\Gg$-projective structure makes $\Spt_S^\fk{S}(\Aa)$ into a proper, cellular, symmetric monoidal model category which obeys the monoid axiom. Moreover, an object of $\SptSSA$ is fibrant if and only if it is an $\Om^\infty$-spectrum.
\end{ssthm}

\ssegment{dc7}{}
We apply the above constructions with $\Aa$ the category of \'etale sheaves of $\QQ$-vector spaces on $\Sm_Z$, $\Gg$ the collection of sheaves of the form $X \otimes \QQ$ ($X\in \Sm_Z$), $\Hh$ the family of complexes of sheaves of the form 
\[
\Cone(\Xx \otimes \QQ \to X \otimes \QQ)
\]
where
$
\Xx \to X
$
is an \'etale hypercovering and $\Xx \otimes \QQ$ is the complex of sheaves of $\QQ$-vector spaces obtained by taking alternating sums of boundary maps, $\Tt$ the family of complexes
\[
\tag{A}
\cdots \to 
0 \to 
\AA^1_X \otimes\QQ \to 
X\otimes \QQ \to 
0 \to 
\cdots
\]
in degrees $0$, $1$,
and $S$ the complex
\[
\tag{S}
\cdots \to 0 \to \QQ(0) \to 
(\AA^1 \setminus \{0\}) \otimes \QQ \to 
0 \to \cdots
\]
in degrees $-1$, $0$. We then have a canonical Quillen equivalence 
\[
\MM(Z, \QQ) = \Spt_S^\fk{S}(\Aa)
\]
with the stable $\Tt$-local $\Gg$-fibrant model structure on the latter. This is well understood by experts, so we limit ourselves to a brief indication of the steps of the verification. Let $\m{proj}$ denote the projective model structure on
$
\Cpx \PSh (\Sm_Z,\QQ)
$.
 Let $\Gg'$ denote the collection of presheaves of the form
$
X\otimes \QQ
$
 for $X \in \Sm_Z$. Then the class of cofibrations in 
\[
\big(\Cpx \PSh (\Sm_Z,\QQ), \m{proj}\big)
\]
 is the smallest class of morphisms closed under pushouts, transfinite compositions and retracts and containing the boundary inclusions $\Bdry(E,n)$ for $n\in \ZZ$ and $E \in \Gg$. In other words
\[
\big(
\Cpx \PSh (\Sm_Z,\QQ),
 \m{proj}
 \big)
=
\big(
\Cpx \PSh (\Sm_Z,\QQ),
 \Gg'\mbox{-proj}
 \big).
\]
Let $a$ denote the sheafification functor and let $b$ denote its right adjoint, which is the inclusion of sheaves in presheaves. One checks that these form a Quillen adjunction 
\[
a: 
\big(
\Cpx \PSh (\Sm_Z,\QQ),
 \Gg'\mbox{-proj}
 \big)
\leftrightarrows
\big(
\Cpx \Sh (\Sm_{Z,\et},\QQ), \Gg\mbox{-proj}
\big)
:b
\]
(where the left adjoint is on the left). Let $W_\et$ denote the class of \'etale-local quasi-isomorphisms of complexes of presheaves and let $L_{W_\et}$ denote Bousfield localization with respect to $W_\et$. Using the universal mapping property of the latter, one checks that the Quillen adjunction above gives rise to a Quillen equivalence
\[
\tag{1}
\big(
\Cpx \PSh (\Sm_Z,\QQ), 
L_{W_\et}(\Gg'\mbox{-proj})
\big)
\leftrightarrows
\big(
\Cpx \Sh (\Sm_{Z, \et},\QQ),
 \Gg\mbox{-proj}
 \big).
\]

To compare the $\AA^1$-localization used by Ayoub \cite[\S3]{AyoubEtale} in the construction of $\MM^\eff(Z,\QQ)$ (recalled in segment \ref{1611290922} above) with the one used by Cisinski-D\'eglise \cite{CDHomAlg} (segment \ref{cd4} above applied with $\Tt$ equal to the set of complexes \ref{dc7}(A)) in their construction of the $\Tt$-local $\Gg$-projective model structure, we note simply that in a stable model category, localization with respect to a morphism $f:A \to B$ is equivalent to localization with respect to $0\to C$ where $C$ denotes the cone of $f$. Combining with (1), we obtain a Quillen equivalence
\[
\tag{2}
\MM^\m{eff}(Z,\QQ) 
\leftrightarrows
\big(
\Cpx \Sh (\Sm_{Z, \et},\QQ),
\Tt\mbox{-loc }
 \Gg\mbox{-proj}
 \big).
\]

To compare the two stabilizations, we compute in
\[
\DA^\m{eff}(Z,\QQ) = \Ho \MM^\m{eff}(Z,\QQ).
\]
There, the standard Zariski covering of $\PP^1$ gives rise to a distinguished triangle
\[
(\AA^1 \setminus \{0\}) \otimes \QQ[0]
\to
\QQ(0) \oplus \QQ(0) 
\to 
\PP^1 \otimes \QQ[0]
\to
(\AA^1 \setminus \{0\}) \otimes \QQ[1].
\]
Hence, applying the octahedral axiom to the sequence of morphisms
\[
\QQ(0) \oplus \QQ(0) 
\to
\QQ(0)
\to
\PP^1\otimes \QQ[0]
\]
we obtain the equivalence
\[
\frac{\PP^1 \otimes \QQ[0]}{\QQ(0)}
=
\frac{(\AA^1 \setminus \{0\}) \otimes \QQ[0]}{\QQ(0)} [1].
\]
The last complex is quasi-isomorphic (as a complex of presheaves) to the complex (S). Hence the two stabilizations produce Quillen equivalent model categories. 

\begin{scor}
\label{monax}
The motivic model category $\MM(Z, \QQ)$ is proper, cellular, symmetric monoidal, and obeys the monoid axiom as well as the commutative monoid axiom of White \cite{White}. 
\end{scor}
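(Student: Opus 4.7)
The plan is to obtain properness, cellularity, the symmetric monoidal structure, and the monoid axiom as immediate corollaries of Theorem \ref{dc6}, and then to handle the commutative monoid axiom as a separate step using the $\QQ$-linearity of the coefficients. In particular, the first task is to check that the explicit quadruple $(\Aa, \Gg, \Hh, \Tt)$ together with the spectrification complex $S$ of segment \ref{dc7} satisfies every hypothesis of Theorem \ref{dc6}.

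For this verification I would proceed as follows. The descent structure axioms DS1 and DS2 for the pair consisting of representable $\QQ$-linear étale sheaves on $\Sm/Z$ and complexes coming from étale hypercoverings are the content of the étale descent structure constructed by Cisinski--D\'eglise; indeed, acyclicity and $\Gg$-cofibrancy of the hypercovering complexes are built into their construction, and $\Hh$-flasqueness implies $\Gg$-locality through the standard descent spectral sequence argument. The flatness axioms FlDS1 and FlDS2 hold because $\Gg$ contains $\QQ(Z) = \one$, is closed under tensor products (since fiber products of smooth $Z$-schemes are again in $\Sm/Z$ in the relevant generality and $\QQ$-linearization commutes with this product), and tensoring by a representable free $\QQ$-module sheaf is exact. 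The family $\Tt$ consisting of the complexes $[\QQ(\AA^1 \times X) \to \QQ(X)]$ is a flat localizing family (axioms FlLF1 and FlLF2) for the same reason: its constituents are representable and tensoring with a representable preserves the shape of the complex up to base change. Finally, the complex $S = [\QQ(0) \to \QQ(\AA^1 \setminus \{0\})]$ has representable components in each degree and is therefore $\Gg$-cofibrant. With these verifications in hand, Theorem \ref{dc6} delivers properness, cellularity, the symmetric monoidal structure, and the monoid axiom on $\SptSSA = \MM(Z,\QQ)$.

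It remains to establish the commutative monoid axiom of White, which asks that for every (trivial) cofibration $f$ in $\MM(Z,\QQ)$ and every $n \geq 1$, the map $f^{\square n}/\Sigma_n$ obtained by taking $\Sigma_n$-orbits of the $n$-fold pushout-product is again a (trivial) cofibration, and that the associated cell attachments preserve weak equivalences. The plan is to verify this first at the level of $\Cpx(\Aa)$ with the $\Tt$-local $\Gg$-projective structure, then propagate to symmetric $S$-spectra. Here the working in $\QQ$-coefficients is essential: the averaging idempotent $\frac{1}{n!}\sum_{\si \in \Sigma_n} \si$ realizes the coinvariants as a direct summand of the invariants, so strict and homotopy $\Sigma_n$-quotients coincide and the comparison maps one would otherwise need to control become identities. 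Combined with the fact that the generating cofibrations are built from representable sheaves (on which $\Sigma_n$ acts by permuting tensor factors that are already free), this reduces $f^{\square n}/\Sigma_n$ to a pushout-product of free maps with an explicit $\Sigma_n$-coinvariant, which is itself again a generating-type cofibration.

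The main obstacle I expect is precisely the passage from $\Cpx(\Aa)$ to $\SptSSA$ in the commutative monoid step, where one has to control the interaction between $\Sigma_n$-orbits on the symmetric spectrum level and the levelwise computation of pushout-products. The standard resolution, which I would follow, is to exploit the compatibility of the symmetric spectrum structure with the monoidal structure on $\Cpx(\Aa)$ and to invoke the projective symmetric spectrum machinery of Hovey \cite{HoveySpectra}; in characteristic zero, the $\frac{1}{n!}$ trick lifts this verification to the spectrum level without new input. Everything else (properness, cellularity, pushout-product axiom, monoid axiom) is handed to us by Theorem \ref{dc6}, so the argument reduces to the two verifications sketched above.
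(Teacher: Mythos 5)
Your proposal is correct and follows essentially the same route as the paper: obtain properness, cellularity, the symmetric monoidal structure, and the monoid axiom as a direct application of Theorem~\ref{dc6} to the data of segment~\ref{dc7}, and deduce the commutative monoid axiom from $\QQ$-linearity. One small remark: your worry about ``the passage from $\Cpx(\Aa)$ to $\SptSSA$'' in the commutative monoid step is unnecessary, since the averaging idempotent $\frac{1}{n!}\sum_{\si\in\Si_n}\si$ exhibits $h^{\medcirc n}$ as a retract of $h^{\oblong n}$ \emph{directly} in the $\QQ$-linear category $\MM=\SptSSA$, and retracts of (trivial) cofibrations are (trivial) cofibrations in any model category; there is no need to descend to $\Cpx(\Aa)$ and re-ascend via Hovey's spectrum machinery (cf.\ Lemma~\ref{scm1} and its proof).
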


\begin{proof}
Special case of theorem \ref{dc6}, except for the commutative monoid axiom. The latter follows from the (not commutative) monoid axiom since $\MM(Z,\QQ)$ is $\QQ$-linear, so that the quotient of a morphism $f$ by a $\Sigma_n$-action is a retract of $f$.
\end{proof}

\segment{1611290924}{Definition}
We define the \emph{rational motivic model category} $\Mdga = \Mdga(Z, \QQ)$ to be the category of commutative monoids in $\MM(Z,\QQ)$. We define a morphism in $\CMM$ to be a \emph{weak equivalence} if the associated morphism in $\MM(Z,\QQ)$ is a weak equivalence, and a \emph{cofibration} if the associated morphism in $\MM(Z,\QQ)$ is a cofibration.

\begin{sprop}
\label{p1}
With these definitions, $\Mdga(Z,\QQ)$ is a cofibrantly generated model category.
\end{sprop}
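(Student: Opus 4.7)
The plan is to obtain the model structure on $\Mdga = \cMon(\MM)$ by right-transfer along the free commutative monoid / forgetful adjunction
\[
\Sym : \MM \rightleftarrows \Mdga : U,
\]
using Kan's transfer theorem in the form refined by White \cite{White} for the commutative monoid axiom. By definition, a morphism $f$ in $\Mdga$ is declared a weak equivalence (resp.\ a fibration) iff $U(f)$ is so in $\MM$; cofibrations are then determined by the lifting property. The candidate generating (acyclic) cofibrations are $\Sym(I)$ and $\Sym(J)$, where $I$ and $J$ are generating (acyclic) cofibrations for the stable $\Tt$-local $\Gg$-projective structure on $\MM = \SptSSA$ supplied by Theorem \ref{dc6}.

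First I would verify the hypotheses of Kan's transfer theorem. Smallness of the domains of $\Sym(I)$ and $\Sym(J)$ (relative to the relevant cell complexes) follows from the fact that $\MM$ is cellular (Corollary \ref{monax}) and from the usual argument that $\Sym$ preserves smallness when the underlying monoidal model category is combinatorial/cellular. The nontrivial hypothesis is that every relative $\Sym(J)$-cell complex in $\Mdga$ is forgotten to a weak equivalence in $\MM$. This is precisely the content of the \emph{commutative monoid axiom} of White: pushouts in $\cMon(\MM)$ along free maps $\Sym(j)$ built from trivial cofibrations $j$ in $\MM$ are sent by $U$ to trivial cofibrations in $\MM$ (using that $\MM$ is $\QQ$-linear so that symmetric tensor powers are retracts of ordinary tensor powers, combined with the monoid axiom for transfinite compositions). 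Corollary \ref{monax} gives us this axiom, together with properness and the monoid axiom, so every relative $\Sym(J)$-cell is sent to a trivial cofibration, hence in particular to a weak equivalence.

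Once the transfer hypotheses are in hand, Kan's transfer theorem produces a cofibrantly generated model structure on $\Mdga$ with generating (acyclic) cofibrations $\Sym(I)$ (resp.\ $\Sym(J)$), in which weak equivalences and fibrations are detected by $U$. By construction, a map in $\Mdga$ whose underlying map in $\MM$ is a cofibration is automatically a cofibration in $\Mdga$, since $\Sym$ sends cofibrations to cofibrations (an adjointness argument using that $U$ preserves fibrations and trivial fibrations). Conversely, cofibrations in $\Mdga$ forget to cofibrations in $\MM$ by the small object argument applied to $\Sym(I)$, using again the commutative monoid axiom. Hence the notion of cofibration in $\Mdga$ coincides with that defined in the statement of the proposition.

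The main obstacle is the verification of the transfer hypothesis for $\Sym(J)$-cells; everything else is formal. This obstacle is removed entirely by invoking White's commutative monoid axiom, which has already been verified for $\MM$ in Corollary \ref{monax} via the $\QQ$-linearity retract trick applied to the monoid axiom of Schwede--Shipley. Thus the proposition reduces to a citation of Kan's transfer theorem plus White's general result on lifting model structures to commutative monoids.
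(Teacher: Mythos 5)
Your proof takes essentially the same route as the paper: the paper's own proof is the one-line citation ``This is an application of Theorem 3.2 of White in view of Corollary \ref{monax},'' and you are simply unpacking what is inside White's transfer theorem (Kan transfer along $\Sym \dashv U$, smallness via cellularity, acyclicity of $\Sym(J)$-cells via the commutative monoid axiom). That part is fine.

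The one place you and the paper diverge in emphasis is the characterization of cofibrations, and there your argument has a gap. Definition \ref{1611290924} in the paper declares cofibrations in $\Mdga$ to be those maps whose \emph{underlying} map in $\MM$ is a cofibration, whereas White's transferred structure detects weak equivalences and \emph{fibrations} via $U$ and defines cofibrations by the lifting property. You notice the mismatch and try to reconcile the two, but your justification for the direction ``$U(f)$ a cofibration $\Rightarrow$ $f$ a cofibration in $\Mdga$'' does not work: the fact that $\Sym$ is left Quillen (and hence preserves cofibrations) does not imply that $U$ \emph{reflects} cofibrations, and indeed a forgetful functor in a right-transferred model structure generally does not. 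The other direction you mention --- cofibrations in the transferred structure on $\Mdga$ forget to cofibrations in $\MM$ --- does follow (via cell induction and the commutative monoid axiom), but the converse is the problematic one. Since the paper's proof is silent on this point as well, this is best read as an imprecision in Definition \ref{1611290924} (it should specify weak equivalences and fibrations, with the statement about underlying cofibrations appearing as a consequence, not a definition) rather than something your proposal fails to match.
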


\begin{proof}
This is an application of Theorem 3.2 of \cite{White} in view of corollary \ref{monax}. 
\end{proof}

\begin{slm}
\label{cof-fl}
Cofibrant objects of $\MM(Z,\QQ)$ are flat, that is, if $M$ is a cofibrant object of $\MM(Z,\QQ)$ and $f:A\to B$ is a weak equivalence, then $id_M \otimes f$ is again a weak equivalence.
\end{slm}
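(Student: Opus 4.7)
The plan is to deduce flatness of cofibrant objects from the axioms of the flat descent structure and flat localizing family set up in segments \ref{cd2}--\ref{cd3} and the recognition in \ref{dc7} of $\MM(Z,\QQ)$ as a stable $\Tt$-local $\Gg$-projective symmetric spectrum category. The broad strategy is the standard cellular induction: verify flatness on the sources and targets of the generating (trivial) cofibrations, then propagate along pushouts, transfinite compositions, and retracts.

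First I would reduce to the unstable level. By construction \ref{dc7}, the generating cofibrations of $\MM$ are obtained from those of the $\Tt$-local $\Gg$-projective structure on $\Cpx(\Aa)$ by applying the free-spectrum functors $F_n$, which commute with the levelwise tensor product up to symmetry, so it suffices to show that tensoring with any cofibrant complex $K \in \Cpx(\Aa)$ preserves $\Tt$-equivalences. The generating cofibrations of $\Cpx(\Aa)$ are the maps $\Bdry(E,n)$ with $E \in \Gg$ representable, whose sources and targets are shifts of representables; by axiom FlDS1 tensoring with such an $E$ preserves quasi-isomorphisms. Thus for the generating sources and targets, the functor $E \otimes -$ preserves quasi-isomorphisms, and by FlLF2 it carries $\Tt$ into $\Tt$, so it preserves $\Tt$-equivalences on chain complexes.

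Next I would run the cellular induction. Any cofibrant $K$ is a retract of a cell object built by transfinite compositions of pushouts of generating cofibrations starting from $0$. Flatness is obviously preserved under retracts and direct sums. For a pushout square
\[
\xymatrix{
A \ar[r] \ar[d] & K' \ar[d] \\
B \ar[r] & K = K' \coprod_A B
}
\]
where $A \to B$ is a generating cofibration and $K'$ is already known flat, and for any $\Tt$-equivalence $f : X \to Y$, we apply $-\otimes f$ to the pushout (which is a colimit, so it commutes with tensor). The three corners $K' \otimes f$, $A \otimes f$, $B \otimes f$ are $\Tt$-equivalences (by the inductive hypothesis and the previous paragraph), the horizontal maps are cofibrations (cofibrations are stable under tensor with any object, since $A \otimes -$ preserves cofibrations for $A$ cofibrant by the pushout-product axiom / corollary \ref{monax}), and left properness of $\MM$ (corollary \ref{monax}) lets us conclude that $K \otimes f$ is a $\Tt$-equivalence. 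Transfinite composition is handled identically, using that filtered colimits of $\Tt$-equivalences along cofibrations are $\Tt$-equivalences in a cellular left proper model category.

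Finally I would pass from the unstable to the stable setting. The generating cofibrations on $\Spt_S^{\fk{S}}(\Aa)$ are of the form $F_n(i)$ for $i$ an (unstable) generating cofibration, and $F_n$ is symmetric monoidal up to a twist, so the argument above levelwise establishes that $M \otimes -$ preserves levelwise $\Tt$-equivalences for $M$ stably cofibrant. For stable $\Tt$-equivalences, one runs the same cell argument using the additional generating trivial cofibrations introduced by the stable localization; these have flat sources and targets for the same reason as the unstable generators, again by FlDS1 and FlLF2. The hard point of the whole argument — and the essential technical reason that this works — is the compatibility of the two localizations (the $\Tt$-local one and the stabilization one) with tensoring by representables, which is exactly what the flatness axioms FlDS1, FlDS2, FlLF1, FlLF2 were designed to guarantee; absent them, the cellular propagation step fails because the new generating trivial cofibrations introduced at each localization could destroy flatness.
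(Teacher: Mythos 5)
Your route is genuinely different from the paper's, and it has a gap. The paper's proof is a short adjunction argument: by definition, $M \otimes f$ is a stable $\Tt$-equivalence iff it induces a bijection on $\Hom_{D_\Tt(\Aa,S)}(-, E)$ for every $\Om^\infty$-spectrum $E$; by tensor--Hom adjunction this reduces to showing that $\underline\Hom_{\SptSSA}(M, E)$ is again an $\Om^\infty$-spectrum, which follows directly from the pushout-product axiom since $M$ is cofibrant and $E$ is fibrant. No cell induction is required.

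The gap in your cell induction is the assertion that ``cofibrations are stable under tensor with any object.'' The pushout-product axiom only yields that $i \otimes X$ is a cofibration when $i$ is a cofibration and $X$ is \emph{cofibrant}; for arbitrary $X$ this can fail, and assuming it is essentially the flatness statement you are trying to establish. Without it the gluing lemma you invoke does not apply, since it needs at least one leg of each pushout square to be a cofibration. A repair within your framework is possible: the generating cofibrations $\Bdry(E,n)$ are degreewise split monomorphisms with cofiber a shifted representable, so $\Bdry(E,n) \otimes X$ is degreewise split for any $X$, and one can replace the gluing lemma by the five lemma in the triangulated localization, using FlDS1 and FlLF2 to see that the cofiber tensored with $f$ remains a $\Tt$-equivalence. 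But the patched argument is substantially longer and less transparent than the paper's direct adjunction argument against the $\Om^\infty$-spectra that co-detect the weak equivalences.
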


\begin{proof}
Although not stated explicitly by Cinsinski-D\'eglise, this is essentially a feature of their construction. In the notation of segment \ref{cd}, we are to show that for any $\Om^\infty$-spectrum $E$, the induced map
\[
\Hom_{D_\Tt(\Aa,S)} \Big( M \underset{\SptSSA}\otimes A, \; E \Big)
 \from \Hom_{D_\Tt(\Aa,S)} \Big( M  \underset{\SptSSA}\otimes B, \; E \Big)
\]
is bijective. (We've decorated the tensor products to emphasize the fact that they're \emph{not} derived.) By adjunction with inner Hom, it's equivalent to show that the map
\[
\Hom_{D_\Tt(\Aa,S)} \big( A, \underline \Hom_{\SptSSA}(M,E) \big)
 \from \Hom_{D_\Tt(\Aa,S)} \big( B, \underline \Hom_{\SptSSA}(M,E) \big)
\]
is bijective.  So it's enough to show that the inner Hom
\[
\underline \Hom_{\SptSSA}(M,E)
\]
is an $\Om^\infty$-spectrum. Since $M$ is cofibrant for the stable $\Tt$-local $\Gg$-projective structure on $\SptSSA$ and $E$ is fibrant for the same structure, it follows from the monoidal model category structure on $\MM(Z,\QQ)$ that the inner Hom is fibrant for the same structure, hence an $\Om^\infty$-spectrum as hoped. \end{proof}

\segment{scm0}{}
Recall that that the \emph{pushout product} $f \oblong g$ of morphisms
$f: X \to X'$, $g:Y \to Y'$ is by definition the induced morphism
\[
X \otimes Y' \underset{X \otimes Y} {\small \coprod} X' \otimes Y
 \to X' \otimes Y'
\]

\begin{slm}
\label{scm1}
The motivic model category $\MM(Z,\QQ)$ obeys the \emph{strong commutative monoid axiom} of White \cite[Definition 3.4]{White}: if $h$ is a cofibration then
\[
h^{\medcirc n}:= h^{\oblong n}/\Si_n
\]
is again a cofibration for all $n \ge 1$; moreover, if $h$ is a trivial cofibration, then so is $h^{\medcirc n}$.
\end{slm}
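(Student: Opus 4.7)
The plan is to reduce the strong commutative monoid axiom to the pushout--product axiom already in hand, in exactly the same spirit as the argument used for the ordinary commutative monoid axiom in corollary \ref{monax}. The key feature is that $\MM$ is $\QQ$-linear: it is constructed from complexes of \'etale sheaves of $\QQ$-vector spaces via $T$-spectra. Consequently, taking $\Si_n$-orbits is a retract operation, and we may transfer (trivial) cofibrancy of $h^{\oblong n}$ onto $h^{\medcirc n}$ for free.

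First, I iterate the pushout--product axiom (which is part of the symmetric monoidal model structure on $\MM$ from corollary \ref{monax}) to conclude that $h^{\oblong n}$ is a cofibration whenever $h$ is, and is a trivial cofibration whenever $h$ is a trivial cofibration. Next, writing $h\colon K \to L$, the morphism $h^{\oblong n}\colon Q \to L^{\otimes n}$ carries a natural $\Si_n$-action in the arrow category of $\MM$, induced by permutation of tensor factors, and $h^{\medcirc n}$ is by definition the induced morphism $Q/\Si_n \to L^{\otimes n}/\Si_n$ on coinvariants. Because $\MM$ is $\QQ$-linear, the averaging endomorphism
\[
e_n \;:=\; \frac{1}{n!}\sum_{\sigma \in \Si_n}\sigma
\]
is a well-defined idempotent endomorphism of the equivariant arrow $h^{\oblong n}$, and its image is canonically identified with $h^{\medcirc n}$. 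This exhibits $h^{\medcirc n}$ as a retract of $h^{\oblong n}$ in the arrow category.

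Finally, since the classes of cofibrations and of trivial cofibrations in any model category are closed under retracts in the arrow category, $h^{\medcirc n}$ inherits the relevant property from $h^{\oblong n}$, proving both halves of the lemma. I expect no serious obstacle; the only mild point to verify is that $e_n$ is indeed a morphism in the arrow category, which is immediate because the $\Si_n$-action on $h^{\oblong n}$ is equivariant by construction and hence commutes with source and target.
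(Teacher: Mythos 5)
Your proof is correct and follows essentially the same route as the paper: iterate the pushout--product axiom to handle $h^{\oblong n}$, then use $\QQ$-linearity to exhibit $h^{\medcirc n}$ as a retract of $h^{\oblong n}$ (via the averaging idempotent, which the paper leaves implicit) and conclude by closure of (trivial) cofibrations under retracts.
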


\begin{proof}
$\MM(Z,\QQ)$ being monoidal \textit{means} (in part) that the pushout-product of cofibrations is a cofibration, and the pushout product of trivial cofibrations is a trivial cofibration. It follows by induction that if $h$ is a cofibration (respectively a trivial cofibration), so is $h^{\oblong n}$. Since $\MM(Z,\QQ)$ is $\QQ$-linear, $h^{\medcirc n}$ is a retract of $h^{\oblong n}$, hence again a cofibration (respectively a trivial cofibration). 
\end{proof}

\segment{hmon0}{}
Given morphisms
\[
\xymatrix{
X \ar[d]_f \ar[r]^g & A \\
Y
}
\]
let ${^g f}$ denote the pushout of $f$ along $g$. If $X/\Mm$ is the category of morphisms from an object $X$ in a model category $\Mm$, then a morphism in $X/\Mm$ is defined to be a \emph{weak equivalence} if the associated morphism in $\Mm$ is a weak equivalence. Recall that a monoidal model category $\Mm$ is \emph{h-monoidal} if for any cofibration
\[
f: X \to Y,
\]
and any object $Z$,
\begin{description}
\item[hM1] the co-base-change functor
\[
(f\otimes id_Z)_!: (X \otimes Z) / \Mm \to (Y\otimes Z)/\Mm
\]
preserves weak equivalences
(in other words, for any morphism
\[
g: X \otimes Z \to A,
\]
pushout along $^g (f\otimes id_Z)$ preserves weak equivalences), and
\item[hM2] if, moreover, $f$ is a weak equivalence, then so is $f \otimes id_Z$.
\end{description}
(C.f. definition 4.15 of White \cite{White}.)

\begin{slm}
\label{hmon}
The motivic model category $\MM(Z,\QQ)$ is h-monoidal. 
\end{slm}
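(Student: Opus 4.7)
I would verify the two conditions hM1 and hM2 separately, exploiting the $\QQ$-linearity of $\MM$ together with the flatness of cofibrant objects (Lemma \ref{cof-fl}), the left properness and pushout-product axiom (Theorem \ref{dc6}), and the explicit Cisinski--D\'eglise description of cofibrations as pushouts, retracts, and transfinite compositions of the generating $\Bdry(E,n)$.

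For hM2, the natural starting point is the monoidal model structure: if $f:X\to Y$ is a trivial cofibration and $q:QZ\to Z$ is a cofibrant replacement of $Z$, then the pushout-product axiom already gives that $f\otimes id_{QZ}$ is a trivial cofibration. It then suffices to propagate weak equivalence along $q$, i.e.\ to show that both $id_X\otimes q$ and $id_Y\otimes q$ are weak equivalences. For cofibrant $X$ (resp.\ cofibrant $Y$), this is Lemma \ref{cof-fl}. For arbitrary $X, Y$, the $\QQ$-linear exactness of the underlying presheaf tensor product lets one dominate the required equivalences by termwise quasi-isomorphisms, which are detected by the $\Gg$-projective, $\Tt$-local structure; a two-out-of-three argument in the commutative square with vertices $X\otimes QZ$, $Y\otimes QZ$, $X\otimes Z$, $Y\otimes Z$ then completes hM2.

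For hM1, the natural tool is left properness of $\MM$ (Theorem \ref{dc6}), which already reduces the problem to showing that $f\otimes id_Z$ is, for the purposes of pushout, as well-behaved as a cofibration even when $Z$ is not cofibrant. Concretely, one observes that $f\otimes id_Z$ is still a termwise monomorphism of presheaves of $\QQ$-vector spaces: this follows from $\QQ$-linearity together with the inductive description of $\Gg$-cofibrations starting from the $\Bdry(E,n)$'s, which are themselves split injections. Pushout along such termwise monomorphisms preserves termwise quasi-isomorphism, hence weak equivalence after Bousfield localization at $\Tt$ and stabilization.

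\textbf{Main obstacle.} The delicate point is propagating the $\QQ$-linear exactness enjoyed by the underlying presheaf-level tensor product through the three layers of the Cisinski--D\'eglise construction: passage from $\Cpx\Aa$ to symmetric $S$-spectra, $\Tt$-local Bousfield localization, and stabilization. The strong commutative monoid axiom (Lemma \ref{scm1}) and the flatness of cofibrant objects (Lemma \ref{cof-fl}) serve as the bridge between the level at which $\QQ$-linearity applies directly and the stable motivic homotopy category where the statement of h-monoidality lives. Making this bookkeeping rigorous — rather than proving either axiom in isolation — is where the real work lies.
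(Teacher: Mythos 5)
The paper's proof is a one-line citation: combine Proposition 1.13 of Batanin--Berger with Propositions 7.13, 7.19, 7.23 of Cisinski--D\'eglise. You instead attempt a direct verification of hM1 and hM2, which is a genuinely different route, but both halves of your argument have gaps at precisely the point you yourself flag as ``where the real work lies.''

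For hM2, the reduction via the square with vertices $X\otimes QZ$, $Y\otimes QZ$, $X\otimes Z$, $Y\otimes Z$ is sound in shape, but it requires $id_X\otimes q$ and $id_Y\otimes q$ to be weak equivalences for \emph{arbitrary} (not necessarily cofibrant) $X$ and $Y$: a trivial cofibration $f\colon X\to Y$ need not have cofibrant domain, so Lemma \ref{cof-fl} does not apply. Your appeal to ``$\QQ$-linear exactness of the underlying presheaf tensor product'' only gives that levelwise tensoring preserves \emph{termwise} quasi-isomorphisms. But the cofibrant replacement map $q\colon QZ\to Z$ in $\MM$ is generally only a stable $\Tt$-local weak equivalence, not a termwise quasi-isomorphism; the class of stable $\Tt$-local weak equivalences is strictly larger, and $\QQ$-linear exactness says nothing directly about its stability under $X\otimes(-)$ for non-cofibrant $X$. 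This is exactly the content one needs the flatness/h-monoidality criteria of Batanin--Berger and the Cisinski--D\'eglise structural propositions to supply; without them your ``dominate by termwise quasi-isomorphisms'' step does not close.

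For hM1, the observation that $f\otimes id_Z$ is a termwise monomorphism (using $\QQ$-linearity and the inductive generation of $\Gg$-cofibrations from the split injections $\Bdry(E,n)$) is correct, as is the fact that pushout along termwise monomorphisms preserves termwise quasi-isomorphisms at the level of $\Cpx(\Aa)$. But the final clause ``hence weak equivalence after Bousfield localization at $\Tt$ and stabilization'' is an unsupported leap: a morphism along which pushout preserves termwise quasi-isomorphisms is not automatically one along which pushout preserves stable $\Tt$-local weak equivalences. Left properness of the localized/stabilized structure (Theorem \ref{dc6}) gives this only for genuine cofibrations, whereas $f\otimes id_Z$ is typically not one. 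So what is needed is a transfer statement asserting that the h-cofibration property survives the three layers of the Cisinski--D\'eglise construction --- and that is precisely the bookkeeping the paper delegates to Batanin--Berger Proposition 1.13 rather than reproving.
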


\begin{proof}
Combine Proposition 1.13 of Batanin-Berger \cite{BataninBerger} with Propositions 7.13, 7.19, and 7.23 of Cinsinski-D\'eglise \cite{CDHomAlg}.
\end{proof}

\begin{sprop}
\label{lprop}
The category $\Mdga(Z,\QQ)$ of motivic dga's is left proper. 
\end{sprop}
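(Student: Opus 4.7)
The plan is to invoke the machinery developed by White \cite{White} and Batanin-Berger \cite{BataninBerger} for transferring left properness from a monoidal model category $\MM$ to its category of commutative monoids. The key inputs at our disposal are left properness of $\MM$ itself (built into Corollary \ref{monax}), flatness of cofibrant objects (Lemma \ref{cof-fl}), the strong commutative monoid axiom (Lemma \ref{scm1}), and h-monoidality (Lemma \ref{hmon}).

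Concretely, consider a pushout square
\[
\xymatrix{
A \ar[r]^f \ar[d]_g & B \ar[d] \\
C \ar[r] & D
}
\]
in $\Mdga$ with $f$ a cofibration and $g$ a weak equivalence. Our goal is to show $B \to D$ is a weak equivalence. First, by functorial cofibrant replacement in $\Mdga$ and two-out-of-three, I would reduce to the case where the common source $A$ is a cofibrant commutative monoid. In this case $f$ is, up to retract, a transfinite composition of cobase changes of free-commutative-monoid extensions of the form $A \otimes \m{Sym}(h)$, where $h$ ranges over a set of generating cofibrations of $\MM$. The underlying morphism in $\MM$ admits the canonical filtration of White whose associated graded terms involve the symmetric pushout-powers $h^{\medcirc n}$; by Lemma \ref{scm1} each $h^{\medcirc n}$ is itself a cofibration in $\MM$.

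The pushout $D$ in $\Mdga$ coincides with the relative tensor product $B \otimes_A C$, and the cellular description of $f$ expresses the map $B \to D$ as a transfinite composition of pushouts in $\MM$ along morphisms obtained by tensoring the $h^{\medcirc n}$ with cofibrant objects and with the weak equivalence $g$. By h-monoidality (Lemma \ref{hmon}), tensoring a cofibration with a weak equivalence yields a weak equivalence (axiom hM2) and pushouts along cofibrations preserve weak equivalences (axiom hM1); Lemma \ref{cof-fl} supplies the required flatness at the cofibrant stages. A transfinite induction, using that weak equivalences are closed under transfinite composition in the cellular model category $\MM$, then produces the desired weak equivalence $B \to D$. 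The principal technical obstacle is the bookkeeping in the cellular decomposition of $\m{Sym}(h)$, in particular controlling the $\Si_n$-coinvariants appearing in the associated filtration; this is precisely what the strong commutative monoid axiom (Lemma \ref{scm1}) is designed to accomplish, and is the reason the argument would fail for a generic monoidal model category without the $\QQ$-linearity used in the proof of Lemma \ref{scm1}.
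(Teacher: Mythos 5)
Your proposal uses the same ingredients as the paper—flatness of cofibrant objects (Lemma \ref{cof-fl}), the strong commutative monoid axiom (Lemma \ref{scm1}), h-monoidality (Lemma \ref{hmon}), and Corollary \ref{monax}—and the paper likewise ultimately appeals to White's machinery. The difference is that the paper simply checks the hypotheses of White's Theorem 4.17 and cites it, while you attempt to unwind that theorem's argument. This is a reasonable instinct, but it exposes a gap.

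The gap is in your first step: ``by functorial cofibrant replacement in $\Mdga$ and two-out-of-three, I would reduce to the case where the common source $A$ is a cofibrant commutative monoid.'' This reduction is not automatic. In a general model category, pushouts of weak equivalences \emph{between cofibrant objects} along cofibrations are always weak equivalences (this is Reedy's lemma, valid with no properness hypotheses), so ``left properness with cofibrant source'' is essentially free and says nothing on its own; the content of left properness is precisely the extension to arbitrary source. If you try to carry out the reduction explicitly—cofibrantly replace $A$ by $A_c$, factor $A_c \to B$ through a cofibration $A_c \to B_c$ with $B_c$ cofibrant, compare the two pushouts—you find that the comparison map you need to be a weak equivalence is itself a pushout of a weak equivalence along a cofibration with non-cofibrant source, i.e., an instance of the very statement you are proving. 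The argument is circular. This is exactly the point at which White needs the extra hypothesis that the \emph{domains of generating cofibrations are cofibrant}, and it is exactly the point the paper's proof addresses in its first sentence (``Domains of generating cofibrations are cofibrant. Indeed, before stabilizing, every object is cofibrant, and it's straightforward to check that this property is preserved by stabilization.''). Your proposal never verifies this hypothesis and never uses it, and without it the transfinite-filtration argument you sketch does not get off the ground for an arbitrary pushout square. The cleanest fix is to do what the paper does: check that domains of generating cofibrations in $\MM$ (hence in $\Mdga$) are cofibrant and then invoke \cite[Theorem 4.17]{White} directly, rather than re-deriving it.
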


\begin{proof}
Domains of generating cofibrations are cofibrant. Indeed, before stabilizing, every object is cofibrant, and it's straightforward to check that this property is preserved by stabilization. By lemmas \ref{cof-fl}, \ref{scm1}, and \ref{hmon}, and corollary \ref{monax}, theorem 4.17 of White \cite{White} applies.\footnote{Our category is compactly generated as a mere category and weak equivalences are preserved by filtered colimits. Therefor $\MM(Z,\QQ)$ is a compactly generated model category.}
\end{proof}


\segment{d1}{}
Using the model structure of proposition \ref{p1}, we may fix once and for all a fibrant replacement
\[
\one \to \onef
\]
of the unit object in $\Mdga$. There's a natural functor 
\[
\operatorname{C_*} = 
\operatorname{C_*^\mot }( \;\cdot\; , \QQ): \Sm_Z \to \ccM
\]
to the category of cocommutative comonoids in $\MM(Z,\QQ)$ given by
\[
C_*X := \Sus^0_T(X \otimes \QQ[0])
\]
with counit
\[
C_*X \to C_*Z = \QQ(0)
\]
induced by the structure map of $X$, and comultiplication
\[
C_*X \to C_*X \otimes C_*X
\]
induced by the diagonal of $X$. In terms of the object $\onef$ and the functor $C_*$, we have the following theorem.

\begin{sthm}
\label{hrat}
The formula 
\[
\hrat X =
 \operatorname{C^*_\mot}(X, \QQ):=
  \underline \Hom_{\MM(Z,\QQ)}(C_*(X, \QQ),\onef)
\] 
defines a functor 
\[
\C = \C_*^\mot ( \;\cdot\; , \QQ) : \Sm\op_Z \to \Mdga(Z,\QQ).
\]
\end{sthm}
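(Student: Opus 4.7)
The plan is to apply the standard \emph{convolution} formalism in a symmetric monoidal closed category. By Theorem \ref{dc6} (via the identification of $\MM$ with $\SptSSA$ recalled in segment \ref{dc7}), the motivic model category $\MM$ is in particular symmetric monoidal closed, so one has a well-behaved inner Hom $\underline{\Hom}_\MM(-,-)$. The first step is to verify that whenever $C$ is a cocommutative comonoid and $A$ is a commutative monoid in such a category, the inner Hom $\underline{\Hom}_\MM(C,A)$ carries a canonical commutative monoid structure. The multiplication is the composite
\[
\underline{\Hom}_\MM(C,A)\otimes \underline{\Hom}_\MM(C,A)\To \underline{\Hom}_\MM(C\otimes C,\,A\otimes A)\To \underline{\Hom}_\MM(C,A),
\]
whose first arrow is the lax monoidality of $\underline{\Hom}$ provided by the closed structure, and whose second arrow is induced by the comultiplication $\Delta:C\to C\otimes C$ and the multiplication $\mu:A\otimes A\to A$. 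The unit $\one\to \underline{\Hom}_\MM(C,A)$ corresponds under tensor-Hom adjunction to the composite $C\xto{\epsilon}\one\xto{\eta}A$ of the counit of $C$ with the unit of $A$. Associativity and unitality of the resulting convolution product reduce to coassociativity/counitality of $\Delta$ and associativity/unitality of $\mu$, while commutativity follows from cocommutativity of $\Delta$ together with commutativity of $\mu$ and the symmetry of the tensor product.

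Second, I would apply this general formalism with $C = C_*(X,\QQ)$ and $A = \onef$ to obtain the desired commutative monoid structure on $\hrat X$. The first is a cocommutative comonoid by construction, the comultiplication arising from the diagonal of $X$ as recalled just before the theorem; the second is a commutative monoid because $\onef$ is the fibrant replacement of $\one$ \emph{in} $\Mdga = \mathrm{CMon}(\MM)$ fixed at the beginning of segment \ref{d1}. For functoriality, a morphism $f:Y\to X$ in $\Sm_Z$ induces a morphism of cocommutative comonoids $C_*(f):C_*Y\to C_*X$ in $\MM$, and applying $\underline{\Hom}_\MM(-,\onef)$ yields a morphism $\hrat(X)\to \hrat(Y)$ in $\MM$. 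Naturality of the convolution structure in the comonoid slot (which is immediate from the naturality of $\underline{\Hom}$ and the fact that $C_*(f)$ commutes with the comultiplications and counits) ensures that this map is one of commutative monoids, hence a morphism in $\Mdga$.

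The main obstacle is conceptual rather than computational: one needs in hand a symmetric monoidal \emph{closed} structure on $\MM$ whose internal Hom interacts correctly with the $T$-spectrum structure on both slots, so that the convolution formalism can be applied internally. This is precisely the structural output of the Cisinski--D\'eglise machinery combined with Ayoub's formulation of commutative $T$-spectra, reviewed in segment \ref{cd}. Once this input is secured, no further fibrancy or cofibrancy hypotheses on $C_*X$ are required (since the commutative monoid structure we use lives already on $\onef$), and the theorem reduces to a routine diagram chase of the convolution identities inside the closed symmetric monoidal category $\MM$.
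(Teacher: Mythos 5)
Your proposal is correct and fills in exactly what the paper's terse proof asserts: the commutative monoid structure on $\underline{\Hom}_\MM(C_*(X,\QQ),\onef)$ is the convolution product coming from the cocommutative comonoid structure on $C_*(X,\QQ)$ and the commutative monoid structure on $\onef$, and functoriality is naturality of convolution in the comonoid slot. The paper's proof is a one-line assertion of precisely these two facts, so your argument is the same approach, just spelled out.
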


\begin{proof}
For $X \in \Sm_Z$, the inner hom above has the structure of a (strictly!) commutative monoid in $\MM(Z,\QQ)$. Moreover, the morphism 
\[
\hrat(X) \from \hrat(Y)
\]
induced by a morphism of schemes
\[
X \to Y
\]
is strictly compatible with the monoid structures. 
\end{proof}

\section{Rational motivic path cogroupoid}

\sProposition{pmon}{
The natural tensor product
\[
- \otimes_\one -
\]
of commutative monoid objects in a monoidal category makes $\Mdga(Z,\QQ)$ into a monoidal model category. (The subscript $\one$ refers to the unit object $\one = \QQ(0)$ of $\Mdga$, and plays the role of base-ring.) In particular, the homotopy category $\DMdga(Z,\QQ)$ has an induced monoidal structure. We denote the induced tensor product by
\[
- \otimes_\one^\LL -.
\]
}

\segment{1611290926}{Definition}
Fix a cofibrant replacement
\[
\hrat(X)^c \to \hrat(X)
\]
and recall that we've fixed a fibrant replacement
\[
\one \to \onef.
\]
We consider a fixed integral point $x \in X(Z)$. This gives rise to a morphism of monoid objects
\[
x^\lor: \hrat(X)^c \to \hrat(X) \xto{x^*} \one \to \onef.
\]
We define the \emph{coordinate ring of rational motivic loops}, or just \emph{loop ring} at $x$, by
\[
B_x := x^\lor
 \underset{\hrat(X)^c} {\overset{ho} \amalg}
  x^\lor.
\]
More generally, given $\xi, \eta \in \Hom_{\DMdga}(\hrat(X), \one)$ we let $\tilde\xi$, $\tilde\eta$ denote representatives in
\[
\Hom_{\Mdga}(\hrat(X)^c, \onef)
\]
and we define the \emph{coordinate ring of rational motivic paths}, or just \emph{path ring} from $\xi$ to $\eta$, by 
\[
{_\eta B_\xi}: = \tilde \eta 
\underset{\hrat(X)^c} {\overset{ho} \amalg}
\tilde \xi.
\]

\sLemma{1611290927}{
The path ring $_\eta B_\xi$ is independent of the choices of representatives $\tilde \eta$, $\tilde \xi$ up to canonical isomorphism in $\DMdga(Z,\QQ)$.
}

\begin{proof}
Suppose $\tilde \eta': \hrat(X)^c \to \onef$ is homotopy equivalent to $\tilde \eta$, and let $H: C(X)^c \times I \to \onef$ be a homotopy. Then $H$ induces morphisms of diagrams 
\[
\xymatrix{
\big( 
\onef \xfrom{\tilde \eta} \hrat(X)^c \xto{\tilde \xi} \onef
\big)
\ar[d] \\
\big(
\onef \xfrom{H} \hrat(X)^c \times I \xto{\tilde \xi} \onef 
\big)
\\
\big(
\onef \xfrom{\tilde \eta'} \hrat(X)^c \xto{\tilde \xi} \onef
\big)
\ar[u] 
}
\]
which are object-wise weak equivalences.
\end{proof}

\segment{1611290928}{}
We endow the family 
\[
\left\{ {_\eta B_\xi} \right\}_{\xi, \eta}
\]
of objects of $\DMdga(Z,\QQ)$ with a cogroupoid structure as follows. The counit of $_\xi B_\xi$ is the morphism
\[
_\xi B_\xi \to \one
\]
represented by the obvious maps
\[
_\xi B_\xi \to \onef \from \one
\]
in $\Mdga(Z,\QQ)$. For the path cocomposition morphisms, we write
$
H := \hrat(X)^c
$
and consider three morphisms
\[
\ze, \eta, \xi: H \tototo \one^f.
\]
We then have a morphism of diagrams
\[
\xymatrix{
&& H \ar[r]^\ze \ar@{=}[d] & \one^f 
&&
&& H \ar[r]^\ze \ar[d]^\eta & \one^f
\\
& H \ar@{=}[r] \ar@{=}[d] & H & 
\ar[rr] &  &
& \one \ar[r] \ar[d] & \one^f 
\\
H \ar@{=}[r] \ar[d]_\xi & H &&
&&
H \ar[r]^\eta \ar[d]^\xi  & \one^f 
\\
\one^f &&&
&& 
\one^f
}
\]
which gives rise to
\[
{_\ze B_\xi} \to
\left( \ze \coprod_H^{ho} \eta \right)
\; \coprod^{ho} \;
\left( \eta \coprod_H^{ho} \xi \right)
= {_\ze B_\eta} \bigotimes_\one^{\LL}
{_\eta B_\xi}.
\]
We omit the verification of associativity as well as the torsor condition.

\section{Cellular motives and cellular motivic dga's}
\label{tstrsec}

\segment{celmot}{Cellular motives}

Under the assumption that $Z$ obeys Beilinson-Soul\'e vanishing, we discuss the category 
\[
\DAcell(Z,\QQ) \subset \DA(Z,\QQ)
\]
of \emph{cellular motivic complexes}, and its relationship to the Tannakian category of mixed Tate motives constructed by Levine \cite{LevineVanishing}. A similar discussion may be found in Iwanari \cite{Iwanari}.

\ssProposition{extstr}{
Let $M$ be a stable monoidal model category, $T^c$ a tensor-triangulated subcategory of $D:=\Ho M$ with a $t$-structure with heart $A^c$. Let $N^c$ be the full subcategory of $M$ whose objects are those of $T^c$. Assume that the objects of $N^c$ are compact. Let $N$ be the closure of $N^c$ under small homotopy colimits and desuspensions. Then $N$ is stable monoidal, the t-structure on $T^c$ extends to $T:= \Ho N$, and if $A$ denotes the heart of this t-structure, then every object of $A$ is a small colimit of objects of $A^c$.
}

\ssRemark{}{
The situation described in the proposition is summarized by the following diagram,
\[
\xymatrix{
N^c \ar[d] \ar@{}[r]|\subset & N \ar[d] \ar@{}[r]|\subset & M \ar[d]  \\
T^c \ar@{}[r]|\subset & T \ar@{}[r]|\subset & D \\
A^c \ar@{}[u]|\cup \ar@{}[r]|\subset & A \ar@{}[u]|\cup
}
\]
with input $M$ and  $T^c$, and output $N$ and $A$.
}

\begin{proof}
We refer to the definition of t-structure given in Lurie \cite[Definition 1.2.1.1]{LurieAlg}. Let $N^c_{\ge 0}$ denote the full subcategory of $N^c$ corresponding to $T^c_{\ge 0}$, and similarly for $N^c_{\le 0}$. Let $N_{\ge 0}$ denote the closure of $N^c_{\ge 0}$ under small filtered homotopy colimits, and similarly for $N_{\le 0}$. Let $T_{\ge 0}$ denote the homotopy category of $N_{\ge 0}$, and similarly for $T_{\le 0}$.

We have
$\Hom(X,Y) = 0$ for $X\in N_{\ge 0}$, $Y \in N_{\le -1}$. Indeed, 
\[
\Hom(\hocolim X_i, \hocolim Y_j) = \holim_i \hocolim_j \Hom(X_i, Y_j)
\]
by compactness. 

An arbitrary object $X = \hocolim X_i$ of $T$ fits into an exact triangle
\[
\tag{*}
X_{\ge 0 } \to X \to X_{\le -1}.
\]
Indeed, we have triangles like so:
\[
(X_i)_{\ge 0} \to X_i \to (X_i)_{\le -1}.
\]
Since homotopy colimits preserve exact triangles, we obtain (*) by taking homotopy colimits. 

The remaining properties of a t-structure are clear. 

For the statement about the heart, we resort to the language of infinity categories. The full subcategory $N_{\ge 0, \le 0}$ of $N$ is discrete, hence equivalent to the heart; moreover, homotopy colimits in $N_{\ge 0, \le 0}$ are just ordinary colimits. So it will suffice to show that 
\[
\Ind (N_{\ge 0, \le 0}) = \Ind(N_{\ge 0}) \cap \Ind(N_{\le 0}). 
\]
Suppose that $X$ is in the intersection. Write $X = \colim X_i$, $X_i \in N_{\ge 0}$. Consider the cofiber sequence 
\[
\colim \tau_{\ge 1}(X_i) \to 
X \to
\colim \tau_{\le 0 }X_i.
\]
The right hand object is in $\Ind(N_{\le 0})$, as is the middle object. Hence so is the left hand object. But the left hand object is also in $N_{\ge 1}$. Hence it is zero. Hence the map on the right is iso. 
\end{proof}

\ssegment{cellmot}{Definition}
As mentioned above, we assume $Z$ obeys Beilinson-Soul\'e vanishing. We apply Proposition \ref{extstr} with
\[
M := \MM(Z,\QQ)
\]
and
\[
T^c := \DTM^c(Z,\QQ)
\]
the smallest tensor-triangulated subcategory of $\DA(Z,\QQ)$ containing the Tate objects. Under Beilinson-Soul\'e vanishing, Levine constructs a t-structure on $\DTM^c(Z, \QQ)$. As in the proposition, we consider the full subcategory
\[
\MM_\cell^c(Z,\QQ) \subset \MM(Z,\QQ)
\]
(denoted $N^c$ in the proposition) whose objects are in $\DTM^c(Z, \QQ)$, and we let $\MM_\cell(Z,\QQ)$ denote the closure of $\MM_\cell^c(Z,\QQ)$ under homotopy colimits. We define the category of \emph{cellular motives} by
\[
\DA_\cell(Z,\QQ) := \Ho \MM_\cell (Z,\QQ).
\]
The proposition then gives us a t-structure on $\DA_\cell(Z,\QQ)$ extending the one on $\DTM^c(Z,\QQ)$. Moreover, by the proposition, its heart $\TM(Z,\QQ)$ is equal to $\Ind \TM^c(Z,\QQ)$, the Ind-completion of the category $\TM^c(Z, \QQ)$ of (compact) mixed Tate motives. 

We summarize the above discussion in the following proposition.

\begin{ssprop}[Cellular motives]
\label{cellmot}
Assume $Z$ obeys Beilinson-Soul\'e vanishing. Then Levine's t-structure on $\DTM^c(Z,\QQ)$ extends to a t-structure on $\DA_\cell(Z,\QQ)$. Every object of its heart $\TM(Z,\QQ)$ is a small colimit of objects of $\TM^c(Z,\QQ)$.
\end{ssprop}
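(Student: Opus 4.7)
The statement is explicitly framed as a summary of the preceding discussion (segment \ref{cellmot}), so my plan is simply to verify that the hypotheses of Proposition \ref{extstr} are met for the input data $M = \MM(Z,\QQ)$ and $T^c = \DTM^c(Z,\QQ)$, and then read off the conclusion.

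First I would check that $\MM(Z,\QQ)$ satisfies the ambient hypotheses of Proposition \ref{extstr}: it is a stable symmetric monoidal model category. Stability follows from the fact that $\MM$ is a category of symmetric $T$-spectra (the suspension functor $-\otimes T$ is a Quillen equivalence in the stable structure), and the symmetric monoidal model structure is recorded in corollary \ref{monax}. Next I would note that $\DTM^c(Z,\QQ)$ is by construction a tensor-triangulated subcategory of $\DA(Z,\QQ) = \Ho \MM(Z,\QQ)$: it is defined as the smallest such subcategory containing the Tate objects $\QQ(n)$, hence is closed under tensor product, shifts, and cones by definition.

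The key non-formal input is the existence of a $t$-structure on $T^c = \DTM^c(Z,\QQ)$ whose heart is the category $\TM^c(Z,\QQ)$ of compact mixed Tate motives; this is precisely Levine's theorem \cite{LevineVanishing}, which requires the Beilinson--Soul\'e vanishing hypothesis on $Z$ that we are assuming. The remaining hypothesis of Proposition \ref{extstr} is compactness of the objects of $N^c$ inside $M = \MM(Z,\QQ)$. This is the standard compactness of Tate motives in $\DA(Z,\QQ)$: the generating objects $\QQ(n)$ (equivalently, $\Sus^0_T$ applied to shifts/twists of the unit) are compact, and $\DTM^c$ is the thick tensor-triangulated subcategory they generate, so every object of $\DTM^c$ is compact as well.

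Having verified the hypotheses, Proposition \ref{extstr} applies verbatim with $N^c = \MM^c_\cell$, $N = \MM_\cell$, $T = \DA_\cell(Z,\QQ)$, and $A = \TM(Z,\QQ)$. The proposition's output yields both claims simultaneously: the $t$-structure on $T^c$ extends to $T$ (so Levine's $t$-structure extends to $\DA_\cell(Z,\QQ)$), and every object of the heart $A$ is a small colimit of objects of $A^c$ (so every object of $\TM(Z,\QQ)$ is a small colimit of objects of $\TM^c(Z,\QQ)$). I do not foresee a hard step: the only non-formal ingredient is Levine's theorem, which we are invoking as a black box, and compactness of Tate objects which is standard. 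The substantive content really resides in Proposition \ref{extstr}; this proposition is a bookkeeping corollary.
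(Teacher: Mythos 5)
Your proposal matches the paper's own argument: the paper explicitly states in the segment preceding the proposition that one applies Proposition \ref{extstr} with $M = \MM(Z,\QQ)$ and $T^c = \DTM^c(Z,\QQ)$, invoking Levine's theorem under Beilinson--Soul\'e vanishing for the existence of the $t$-structure on $T^c$, and reads off the conclusion. Your verification of the remaining hypotheses (stable monoidal model structure, tensor-triangulated subcategory, compactness of Tate objects) is exactly the bookkeeping the paper leaves implicit, so this is the same proof.
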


\segment{}{Cellular motivic dga's}

\ssegment{13.1}{}
We define the \emph{category of cellular motivic dga's}
\[
\Mdga_\cell = \Mdga_\cell(Z,\QQ)
\]
to be the category of commutative monoid objects in $\MM_\cell(Z,\QQ)$.

\ssProposition{13.3}
{
The category $\Mdga_\cell(Z,\QQ)$ is closed under small homotopy colimits.
}

\begin{proof}
Special case of proposition \ref{13.4} below. 
\end{proof}

\segment{dophil}{Definition}
We say that a monoidal model category $M$ is \emph{monoidophilic} if the category $\cMon(M)$ of commutative monoids in $M$ inherits the structure of a model category with right-Quillen forgetful functor. 

\ssProposition{13.4}
{
Let $M$ be a monoidophilic model category. Let $T$ be a subcategory closed under weak equivalences, small homotopy colimits and derived tensor products.\footnote{The last condition (in the presence of the first condition) reduces to $T$ closed under (ordinary) tensor products of cofibrant objects.} Then $\cMon(T)$ is closed under small homotopy colimits. 
}

\begin{proof}
It's enough to check that $\cMon(T)$ is closed under small homotopy coproducts, realizations and filtered homotopy colimits. (Here ``realization'' means homotopy colimit over $\Delta^{\op}$.) Realizations and filtered homotopy colimits are both \emph{sifted}, hence, by Corollary 3.2.3.2 of Lurie \cite{LurieAlg}, compatible with the forgetful functor. Turning to homotopy coproducts in $\cMon(T)$, those are coproducts of derived tensor products in $T$. 
\end{proof}

\section{Interplay between hopfoids and localization}

\segment{15.1}{Definition}
A \emph{(set-based) hopfoid} in a monoidal category $M$ consists of a set $S$, and a cogroupoid in $\cMon(M)$ with vertices in $S$. In a bit more detail, a family $\{_tB_s\}_{s,t \in S}$ of objects of $M$, a monoid structure
\begin{align*}
\one \To {_t B_s}, && {_t B_s^{\otimes 2}} \To {_t B_s}
\end{align*}
on each ${_tB_s}$, plus counits, cocompositions, and antipodes
\begin{align*}
{_sB_s} \to \one
&&
{_uB_t} \otimes {_tB_s} \from {_uB_s}
&&
{_tB_s} \to {_sB_t}
\end{align*}
such that the multiplication maps are unital and associative and the comultiplication maps are counital, coassociative, and invertible. A \emph{commutative hopfoid} is a hopfoid whose multiplication maps are commutative. As an example, $\Spec$ of a commutative hopfoid in $k$-modules is a groupoid in $k$-schemes in a set-based sense. A \emph{morphism of hopfoids}
\[
\{B\}_{s \in S} \to \{B'\}_{s' \in S'}
\]
consists of a map $f: S \to S'$ of sets, plus a family of morphisms
\[
\{ {_tB_s} \to {_{f(t)}B'_{f(s)}} \}_{s,t \in S}
\]
in $M$ commuting with the structure morphisms of $B$ and $B'$. We write $\cHfd(M)$ for the category of commutative hopfoids in $M$.

\segment{15.2}{}
If $\Mm$ is a monoidophilic model category (\ref{dophil})  and
\[
F_* : \cMon \Mm \to \Mm
\] 
is the forgetful functor, then the right derived functor
\[
RF_*: \Ho \cMon \Mm \to \Ho \Mm
\]
lifts to a functor
\[
\tag{$*$}
\Ho \cMon \Mm \to \cMon \Ho \Mm. 
\]
Given
\[
\one \xto{\epsilon} M,
\hspace{5ex}
M^{\otimes 2} \xto{\mu} M
\]
an object of $\Ho \cMon \Mm$, we define the associated commutative monoid in $\Ho \Mm$ as follows. For the unit we simply take $\Ho (\epsilon)$. For the counit, we choose a cofibrant replacement
\[
M^c \to M
\]
and we take the morphism
\[
M \otimes^\LL M \to M
\]
represented by the composit
\[
M^c \otimes M^c \to M\otimes M \xto{\mu} M.
\]
Verification of the axioms is tedious, but straightforward.

\begin{sprop}[Interplay between Hopfoids and localization]
\label{12.3}
The functor \ref{15.2}($*$) may be upgraded to a functor
\[
\tag{$*$}
\Cogpd \Ho \cMon \Mm \To \Cogpd \cMon \Ho \Mm
 = \cHfd \Ho \Mm.
\]
\end{sprop}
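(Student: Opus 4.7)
The plan is to extend the functor of segment \ref{15.2}($*$) objectwise on cogroupoids, observing that both the structure morphisms and the cogroupoid axioms transport automatically, provided that the functor of \ref{15.2}($*$) is derived symmetric monoidal.

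Let $F : \Ho \cMon \Mm \to \cMon \Ho \Mm$ denote the functor of \ref{15.2}($*$). Given a cogroupoid $\{{_tB_s}\}_{s,t\in S}$ in $\Ho \cMon \Mm$, I apply $F$ objectwise to obtain a family of commutative monoids in $\Ho \Mm$. The counit ${_sB_s} \to \one$ and the antipode ${_tB_s} \to {_sB_t}$, being morphisms in $\Ho \cMon \Mm$, have images under $F$ that are morphisms of commutative monoids in $\Ho \Mm$. For the cocomposition ${_uB_s} \to {_uB_t} \otimes {_tB_s}$, I need to identify $F({_uB_t} \otimes {_tB_s})$ with $F({_uB_t}) \otimes F({_tB_s})$ as commutative monoids in $\Ho \Mm$.

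The key point is that the tensor product that serves as the coproduct in $\cMon \Mm$ is computed on underlying objects simply as the tensor product in $\Mm$. Combining the monoid axiom (corollary \ref{monax}) with the flatness of cofibrant objects (lemma \ref{cof-fl}), one sees that this presentation is homotopy invariant, so that the forgetful functor $\cMon \Mm \to \Mm$ becomes a derived strong symmetric monoidal functor. Consequently $F$ is strong symmetric monoidal with respect to the monoidal structure on $\cMon \Ho \Mm$ inherited from that of $\Ho \Mm$, and the cocomposition in the target makes sense as $F({_uB_s}) \to F({_uB_t}) \otimes F({_tB_s})$.

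What remains is to verify that the counital, coassociative, and antipode axioms hold in $\Cogpd \cMon \Ho \Mm = \cHfd \Ho \Mm$. Since each such axiom is an equality between composites built out of the structure morphisms and tensor products thereof, functoriality of $F$ together with its symmetric monoidal structure ensures that each axiom in $\Cogpd \Ho \cMon \Mm$ transports directly to the corresponding axiom in the target. The principal technical obstacle is the verification that $F$ is derived \emph{strong} symmetric monoidal, rather than merely lax; this reduces to showing that the underlying object in $\Mm$ of a cofibrant coproduct $A^c \amalg_{\cMon \Mm} B^c$ is weakly equivalent to $A^c \otimes B^c$, which follows from the strong commutative monoid axiom (lemma \ref{scm1}) together with the h-monoidality of $\Mm$ (lemma \ref{hmon}) guaranteed by the preceding section.
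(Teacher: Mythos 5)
The paper provides no proof of Proposition \ref{12.3}; it simply states ``We omit the details.'' So there is no argument in the text to compare your proposal against. Judged on its own merits, your approach is sensible and correctly isolates the nontrivial point: the counit and antipode transport automatically since they are mere morphisms, whereas the cocomposition requires knowing how $F$ interacts with the (homotopy) coproduct, i.e.\ that the derived tensor product in $\cMon\Mm$ has, as underlying object in $\Ho\Mm$, the derived tensor product of underlying objects.

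Two points deserve tightening. First, your final reduction is slightly misstated: the coproduct $A^c \amalg_{\cMon\Mm} B^c$ \emph{is literally} the tensor product $A^c\otimes B^c$ of the underlying objects (this is what makes $\cMon$ cocartesian monoidal); what actually needs proof is that $A^c\otimes B^c$ computes the \emph{derived} tensor product $A\otimes^\LL B$ in $\Mm$, and for that you need the underlying objects of cofibrant commutative monoids to be flat in $\Mm$. The strong commutative monoid axiom and $h$-monoidality are indeed the right tools for establishing this, but the chain of implications should be spelled out, since cofibrant objects of $\cMon\Mm$ are typically not cofibrant in $\Mm$. Second, Lemmas \ref{monax}, \ref{cof-fl}, \ref{scm1}, and \ref{hmon} are proved specifically for the motivic category $\MM$, whereas segment \ref{15.2} and Proposition \ref{12.3} are stated for an arbitrary monoidophilic model category $\Mm$. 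Either the flatness/strong-commutative-monoid-axiom/$h$-monoidality hypotheses should be explicitly added to the statement of the proposition (they are satisfied in all the paper's applications), or your argument should be recast as a proof only in those cases.
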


We omit the details.

\section{Connectedness and concentration theorems}%

\segment{tooth}{}
If $k$ is a field, then for us, a commutative dga over $k$ is a commutative monoid object of the category $\Cpx(k)$ of complexes of $k$-vector spaces. We impose no boundedness condition. We denote the category of commutative dga's by $\dga(k)$. We use cohomological notation, so the differential increases degrees. We endow $\Cpx(k)$ with the projective model structure, and we endow $\dga(k)$ with the associated model structure. Explicitly, this means that weak equivalences are defined to be quasi-isomorphisms and fibrations are defined to be surjections (of underlying sets). This makes $\dga(k)$ into a monoidal model category with cocartesian monoidal structure ($\otimes = \coprod$).

\segment{booth}{}
Before we turn to the proof of the connectedness and concentration theorems, we review the construction of explicit cofibrant replacements in $\dga(k)$ from Bousfield--Gugenheim \cite[\S4]{BousfieldGugenheim}, making slight modifications to accommodate dga's that are not concentrated in positive degrees and to suit our purposes. (The reader who isn't familiar with this material should nevertheless consult Bousfield--Gugenheim for details.)

\ssegment{sleuth}{}
The category $\dga(k)$ contains infinite coproducts, which may be constructed concretely as directed colimits of finite coproducts. For $n \in \ZZ$ we let $S(n)$ be the free graded-commutative algebra on one generator $a$ in degree $n$ with differential induced by
\[
da = 0.
\]
Again for $n\in \ZZ$, we let $T(n)$ be the free graded-commutative algebra on generators $b$ in degree $n$ and $c$ in degree $n+1$ with differential induced by
\[
db = c.
\] 

\ssegment{foof}{}
We denote the degree of a homogeneous element $x$ by $|x|$. Consider a morphism of dga's $f:X\to Y$. In constructing a cofibrant replacement
\[
X \xto{\be} L \xto{\psi} Y
\]
of $f$ (meaning that $\be$ is a cofibration and $\psi$ is a trivial fibration), we may first factor $f$ into
\[
X \xto{\ka} X' \xto{f'} Y
\]
where $\ka$ is a cofibration and $f'$ is a fibration inducing surjections on cohomology
\[
HX' \surj HY.
\]
We recall how this is accomplished (although this will not intervene in the sequel). We choose a set $\Yy$ of homogeneous elements spanning $Y$ and a set $\Zz$ of homogeneous elements spanning $ZY$ and we define 
\[
X' = X \otimes 
\bigotimes_{y \in \Yy} T(|y|) \otimes
\bigotimes_{z \in \Zz} S(|z|)
\]
and we define $\ka$ and $f'$ in the obvious way. As explained in loc. cit. the fact that $\ka$ is a cofibration follows from the fact that the dga's $S(n)$, $T(n)$ are themselves cofibrant and the fact that cofibrations are closed under pushouts. The map $f'$ is surjective because the map
\[
\bigotimes_{y \in \Yy} T(|y|) 
\to
X' \to X
\]
is surjective, and it induces surjections on cohomology because the map
\[
\bigotimes_{z \in \Zz} S(|z|)
\to
X' \to X
\]
induces surjections on cohomology.\footnote{In fact, even if the spanning sets chosen above are bases, this construction remains highly redundant, and we will have to replace it by one that is not redundant in a special case below. }

\ssegment{roofer}{}
Therefore, after possibly replacing $X$ by $X'$ we may assume $f$ is a cofibration which induces surjections on cohomology. Under this assumption we construct a factorization
\[
X \xto{\be'} L'_f \xto{\psi'} Y
\] 
where $\be'$ is a cofibration, $\psi'$ is a fibration and
\[
\ker H(\be') = \ker H(f).
\]
For each $n\in\ZZ$ we let $R^n$ denote the fibered product of vector spaces
\[
R^n = Y^{n-1} \times_{Z^nY} Z^nX
\]
--- a set of pairs $(y,x)$ such that 
\[
dy = fx,
\]
and we choose a set $\Rr^n \subset R^n$ which spans $R^n$. We set 
\[
\Rr = \bigcup_{n \in \ZZ} \Rr^n.
\]
We then have a commuting square like so
\[
\xymatrix{
\underset {(y,x) \in R} \bigotimes S(|x|) \ar[r] \ar[d] &
X \ar@{.>}[d]^-{f} \\
\underset {(y,x) \in R} \bigotimes T(|y|) \ar@{.>}[r] &
Y
}
\]
and we let $L'_f$ be the pushout of the solid arrow diagram. That $\be'$ is a cofibration follows from the fact that the natural maps
\[
S(n) \to T(n-1)
\]
are cofibrations, and the remaining properties are easily verified. 

\ssegment{block}{}
We continue with the map $f:X \to Y$ under the assumptions of segment \ref{roofer}.
To accord with the notation in Bousfield--Gugenhem, we set  
\begin{align*}
L(2) := L'_f,
&&
\be_2 := \be',
&&
\psi_2 := \psi'
\end{align*}
and define recursively
\[
L(n+1) = L'_{\psi_n}
\xto{\psi_{n+1} := \psi'}
Y.
\]
Finally, we set 
\[
L := \colim_n L(n).
\]
The induced map $\be$ is a cofibration and the induced map $\psi$ is a trivial fibration, as is easily verified.


\segment{14.1}{}
Let $Z$ be an integral scheme with function field $K$ of characteristic zero. In proving the connectedness and concentration theorems we will visit many of the categories discussed above: the motivic model category
\[
\MM = \MM(Z, \QQ)
\]
(\S\ref{bubble}), its homotopy category $\DA = \DA(Z,\QQ)$, the category of motivic dga's $\Mdga = \Mdga(Z, \QQ)$ (\S\ref{1611290924}), its homotopy category
\[
\DMdga = \DMdga(Z, \QQ),
\]
the category of commutative dga's over $\Qp$: $\dga = \dga(\Qp)$, its homotopy category $\Ddga = \Ddga(\Qp)$, as well as the (unbounded) derived category $D(\Qp)$ of $\Qp$-vector spaces.

\segment{J2}{}
There's a $p$-adic \'etale realization functor on the level of model categories which is covariant, monoidal, colimit preserving and t-exact on the cellular subcategory $\Mdga_\cell(Z,\QQ)$.\footnote{
Let us emphasize that while we do not assume $p$ to be invertible on $Z$, we only compute \'etale cohomology after passing to a field of characteristic zero.
} We sketch its construction in the language of infinity categories. Let $\QQ_{Z_\et}$ denote the constant sheaf on $Z_\et$, let $\Vect(\QQ_{Z_\et})$ denote the category of $\QQ_{Z_\et}$-vector spaces (that is, the category of sheaves of $\QQ$-vector spaces), and let $\Cpx \Vect(\QQ_{Z_\et})$ denote the category of complexes in $ \Vect(\QQ_{Z_\et})$.  We let $\DD A =\DD A(Z, \QQ)$ denote the infinity category obtained from 
\[
\Cpl \Vect(\QQ_{Z_\et}) 
\llbracket (\mbox{local quasi-isomorphisms})\inv \rrbracket
\]
by localization and stabilization as in the model categorical setting. Let $f: X \to Z$ denote the structure morphism of $X$. Let $ \Qp_{X_{\overline K, \et}}$ denote the constant sheaf on $X_{\overline K, \et}$ associated to $\Qp$.  The language of infinity categories provides a canonical lift
\[
 \RR f_* \Qp_{X_{\overline K, \et}} \in \DD(\Qp)
\]
of the derived pushforward. The functor 
\[
h_{\overline K, \et}: \Sm_{/Z}^{\op} \to \DD(\Qp)
\]
\[
 X \mapsto  \RR f_* \Qp_{X_{\overline K, \et}}
\]
satisfies \'etale descent and $\AA^1$-invariance, and is $(\PP^1, \infty)$-stable. It then follows from the universal mapping property of $\DD A$ that there's a symmetric monoidal functor
\[
\RRe_{\overline K, \et}: \DD A(Z,\QQ) \to \DD(\Qp)
\]
which factors $h_{\overline K, \et}$ up to homotopy (c.f. Robalo  \cite[Corollary 2.39]{RobaloBridge}). 

\segment{shu1}{}
We let $\DDMdga(Z,\QQ)$ denote the infinity category of commutative monoids in $\DD A(Z,\QQ)$ and we let $\DDdga(\Qp)$ denote the infinity category of commutative differential graded $\Qp$-algebras. According to Hinich \cite{HinichRectification},
\[
\DDMdga = \Mdga \llbracket W\inv \rrbracket
\]
and
\[
\DDdga = \dga \llbracket W\inv \rrbracket
\]
where, in both cases, $W$ denotes the class of weak equivalences. Cup product gives 
$\RR f_*\Qp_{X_{\overline K, \et}}$
the structure of a differential graded algebra, which we denote by 
\[
\CC^*_\et(X_{\overline K}, \Qp).
\]
It follows from the construction of the factorization of $h_{\overline K, \et}$ above (\S\ref{J2}) that the functor $\RRe_{\overline K, \et}$ preserves small (homotopy) colimits. It is then a formal consequence that $\RRe_{\overline K, \et}$ can be upgraded to a functor
\[
\RRe_{\overline K, \et}^a:\DDMdga(Z,\QQ) \to \DDdga(\Qp)
\]
which factors the functor
\[
\CC^*_\et \big( (\cdot)_{\overline K}, \Qp \big):
\Sm_{/Z}^{\op} \to 
\DDdga(\Qp)
\]
up to homotopy.

\segment{ja1}{}
We note for use in segment \ref{diaproof} below that for similar reasons, there's a further factorization of the unstructured realization functor $\RRe_{\overline K, \et}$ through a structured realization functor: let $\DD(\Qp_{K, \et})$ denote the derived infinity category of complexes of sheaves of $\Qp$ vector spaces on the \'etale site of $\Spec K$; then there's a functor
\[
\RRe_\et:
\DD A(Z,\QQ) \to 
\DD(\Qp_{K,\et})
\]
which factors the functor
\[
X \mapsto \RR f_* \Qp_{X_K}
\]
up to homotopy.

This structured realization functor can equally be upgraded to a realization functor on motivic dga's. Let $\DDdga(\Qp_{K_\et})$ denote the infinity category of sheaves of (for us always commutative) differential graded $\Qp_{K_\et}$-algebras. Then there's a functor
\[
\RRe_\et^a:
\DDMdga(Z,\QQ)
\to
\DDdga(\Qp_{K_\et})
\]
which factors the functor
\[
\CC^*_\et( \;\cdot\;, \Qp): \Sm^{\op}_{/Z} \to
 \DDdga(\Qp_{K_\et})
 \]
 which sends a smooth scheme $X$ to the derived pushforward of its generic fiber 
 \[
 \CC^*_\et( X_K, \Qp) := \RR f_* \Qp_{{X_K}_\et}
 \]
 equipped with cup product.

 \segment{1611290929}{Poof of the connectedness and concentration theorems (\ref{ctd} and \ref{ctr})}

\ssegment{shu3}{}
 The forgetful functor
\[
F_*: \Mdga \to \MM
\]
induces a functor
\[
RF_*: \DMdga \to \DA.
\]
Let $I$ denote the category
\[
\bullet \from \bullet \to \bullet.
\]
Recall that if $M$ is a model category, then the homotopy pushout is by definition a functor 
\[
\Ho(M^I) \to \Ho(M)
\]
from the homotopy category of the diagram category $M^I$. Recall that $\DA_\cell = \DA_\cell(Z,\QQ)$ denotes the category of cellular motives (\ref{cellmot}), and that its heart $\TM = \TM(Z,\QQ)$ (for the t-structure discussed in the same segment) is equal to $\Ind \TM^c = 
\Ind \TM^c(Z,\QQ)$, where $\TM^c$ is the \textit{usual} category of mixed Tate motives.

With these notations, we have (for each $i \in \ZZ$) a commuting diagram like so.
\[
\xymatrix{
&
\Ho(\Mdga^I) \ar[r]^-{Re_{\overline K, \et}^{a, I}} \ar[d]_{\hocolim} &
\Ho (\dga^I)  \ar[d]^{\hocolim}
\\
&
\DMdga \ar[r]^-{Re^a_{\overline K, \et}} \ar[d] &
\Ddga  \ar[d]^{RF_*}
\\
\DA_\cell \ar@{}[r]|\subset \ar[d]_{H_i^t}  &
\DA \ar[r]^-{Re_{\overline K, \et}} &
D(\Qp) \ar[d]^{H_i}
\\
\TM \ar[rr]_{Re^\m{\heartsuit}_{\overline K, \et}} &
&
\Vect (\Qp).
}
\]
Here $\Vect \Qp$ denotes the category of \emph{all} vector spaces. The lower-left portion of the diagram is available only under the assumption that a mixed Tate t-structure exits.


\begin{sslm}
\label{cons}
If an object $A$ of $\TM(Z,\QQ) = \Ind \TM^c(Z,\QQ)$ goes to zero under $Re^\m{\heartsuit}_{\overline K, \et}$ then $A=0$. 
\end{sslm}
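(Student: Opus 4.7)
My strategy is to reduce to the case of a compact mixed Tate motive, where the weight filtration together with the explicit computation on Tate twists gives the result.

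\emph{Step 1: reduction to compact objects by passing to subobjects.} By Proposition \ref{cellmot}, the heart $\TM(Z,\QQ)$ is the Ind-completion $\Ind \TM^c(Z,\QQ)$, so I can write $A = \colim_i A_i$ as a filtered colimit of compact objects $A_i \in \TM^c(Z,\QQ)$. Replacing each $A_i$ with its image in $A$ (which is a quotient of a compact object and hence still compact in $\TM^c$), I may assume the system is directed by inclusion, exhibiting $A$ as the directed union of compact subobjects $A_i \hookrightarrow A$. The realization $\RRe^\heartsuit_{\overline K, \et}$ is the restriction to the heart of the t-exact functor $\RRe_{\overline K, \et}$ described in \S\ref{J2}, hence is exact; in particular it preserves monomorphisms. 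Therefore $\RRe^\heartsuit(A_i) \hookrightarrow \RRe^\heartsuit(A) = 0$, so each $\RRe^\heartsuit(A_i) = 0$. It thus suffices to establish the lemma for objects of $\TM^c(Z,\QQ)$.

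\emph{Step 2: the compact case via the weight filtration.} A compact object $M \in \TM^c(Z,\QQ)$ carries a finite weight filtration whose graded pieces are finite direct sums of Tate twists $\QQ(n)$. On the generator $\QQ(n)$, the realization is $\RRe^\heartsuit_{\overline K, \et}(\QQ(n)) = \Qp(n)$, a one-dimensional $\Qp$-vector space. By exactness of $\RRe^\heartsuit$, the dimension of $\RRe^\heartsuit(M)$ equals the total rank $\sum_n \dim_{\QQ} \gr^W_n M$ of the graded pieces of the weight filtration. If $\RRe^\heartsuit(M) = 0$, all these ranks vanish, so each $\gr^W_n M = 0$, and hence $M = 0$ by induction along the (finite) weight filtration.

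\emph{Main obstacle.} The substantive point is Step 1: one must know that every object of the Ind-completion can actually be written as a directed union of compact subobjects, which in turn uses that $\TM^c(Z,\QQ)$ is closed under quotients inside its Ind-completion and that the inclusion of the heart into $\DA_\cell(Z,\QQ)$ interacts correctly with filtered colimits. Once this is in hand, the exactness of the realization reduces everything to the elementary dimension count on Tate twists carried out in Step 2.
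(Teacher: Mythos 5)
Your proof is correct, but it takes a genuinely different route from the paper's. The paper's argument writes $A = \colim_I A_\bullet$, uses that $\RRe^\heartsuit_{\overline K, \et}$ commutes with filtered colimits (being a left adjoint), observes that vanishing of a filtered colimit of finite-dimensional objects forces the realized transition maps to vanish along a cofinal subdiagram, and then invokes \emph{full faithfulness} of $\RRe^\heartsuit_{\overline K, \et}$ on $\TM^c$ to lift this to vanishing of the transition maps $A_\bullet(F(f))$ themselves, whence $A = \colim_J A_\bullet = 0$. You instead replace the colimit diagram by a directed union of compact subobjects (using that images of compacts in $\Ind\TM^c$ are compact), reduce by exactness of $\RRe^\heartsuit_{\overline K, \et}$ to the compact case, and then establish conservativity on $\TM^c$ directly from the weight filtration and the rank-one computation $\RRe^\heartsuit(\QQ(n)) = \Qp(n)$. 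The trade-off: the paper's argument is shorter if one is willing to take full faithfulness of the $p$-adic realization on $\TM^c$ as a black box (which in the end rests on Soul\'e's theorem on the injectivity of the $p$-adic regulator), whereas your argument only needs conservativity, which you derive by an elementary dimension count; in exchange you must manage the slightly fussier point about rewriting an Ind-object as a filtered union of compact \emph{sub}objects, which you correctly flag and which holds because $\Ind\TM^c$ is locally noetherian. Both proofs are valid.
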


\begin{proof}
Write 
\[
A = \colim A_\bullet
\]
\[
A_\bullet : I \to \TM^c(Z,\QQ)
\]
as a filtered colimit of compact objects. The functor $Re_{\overline K, \et}^\m{\heartsuit}$ commutes with colimits since it's a left adjoint, so $A$ maps to
\[
\colim Re_{\overline K, \et}^\m{\heartsuit, c} A_i.
\]
The vanishing of a filtered colimit of compact objects means that there exists a cofinal subdiagram $F: J \to I$ such that for any morphism
$
f
$
in $J$, 
\[
Re_{\overline K, \et}^\m{\heartsuit} (A_\bullet(F(f))) = 0.
\]
However, the restriction of $Re_{\overline K, \et}^\m{\heartsuit}$ to $\TM^c$ is fully faithful, so that in this case we have
\[
A_\bullet(F(f)) = 0.
\]
Hence 
\[
A = \colim_I A_\bullet = \colim_J A_\bullet = 0. \qedhere
\]
\end{proof}

\ssegment{}{}
Evidently, to establish the non-\textit{mixed Tate} case, we must show:
\begin{itemize}
\item[(*)]
 $H_i RF_* Re^a_{\overline K, \et} ({_\eta B_\om}) $ is nonzero for $i=0$ and is zero for $i \neq 0$. 
 \end{itemize}
 According to lemma \ref{cons}, statement (*) establishes the mixed Tate case as well. The remainder of the proof is therefor the same in both mixed Tate and non-\textit{mixed Tate} cases.

\ssegment{fal10}{}
We fix cofibrant / fibrant replacements $QX \xto{c} CX$, $\one \xto{f} P\one$ in $\Mdga(Z,\QQ)$, and suppose $\om$, $\eta$ are represented by morphisms
\[
P\one
\xfrom{\eta}
QX \xto{\om} P\one.
\]
We switch to working in the infinity category $\DDMdga(Z,\QQ)$ associated to $\Mdga(Z,\QQ)$. There we may pick inverses up to homotopy 
\[
QX \xfrom{c\inv} CX
\]
\[
\one \xfrom{f\inv} P\one.
\]
We let $\om' : = f\inv \om c\inv$, $\eta' = f\inv \eta c\inv$.
We then have an equivalence of diagrams
\[
( P\one \xfrom{\eta}
QX \xto{\om} P\one)
 \sim 
( \one \xfrom{\eta'}
CX \xto{\om'} \one)
\]
in $\DDMdga(Z,\QQ)$\footnote{i.e. a morphism in the diagram category which induces an isomorphism in the homotopy category; see, for instance, Proposition 1.2.7.3 of Lurie \cite{LurieTopos} for the formation of the category of diagrams.}, and hence an equivalence of pushouts 
\[
P\one \coprod_{QX} P\one \sim \one \coprod_{CX} \one.
\]
The functor 
\[
\RR e_{\overline K, \et}^a: 
\DDMdga(Z,\QQ) \to
 \DDdga(\Qp)
\]
preserves equivalences\footnote{c.f. Proposition 1.2.3.1 of Lurie \cite{LurieTopos}} and respects pushouts and terminal objects. 
Indeed, the functor
\[
\RR e_{\overline K, \et}: \DD A(Z,\QQ) \to \DD(\Qp)
\]
from the infinity category associated to the motivic model category to the derived infinity category of complexes of $\Qp$-vector spaces preserves colimits and is monoidal. Since the forgetful functors
\[
\DDMdga(Z,\QQ) \to \DD A(Z,\QQ)
\]
and
\[
\DDdga(\Qp) \to \DD(\Qp)
\]
are conservative and preserve sifted colimits, it follows that the functor of monoid objects $\RR e_{\overline K, \et}^a$ preserves sifted colimits and finite products, hence all small colimits.
 
Hence, we have an equivalence
\[
\RR \m{e}_{\overline K, \et}^a (P\one \coprod_{QX} P\one)
=
\Qp \coprod_{C_\et^*(X_{\overline K})} \Qp.
\]

\ssegment{derdia}{}
We claim that the derived augmentations
\[
\tag{*}
\Qp
\xfrom{\RR\m{e}^a_{\overline K, \et} \eta'}
C^*_\et(X_{\overline K}, \Qp)
 \xto
 {\RR\m{e}^a_{\overline K, \et} \om'}
 \Qp
\]
are in the image of the map
\[
\Hom_{\dga}(C_\et^*(X_{\overline K}), \Qp)
 \to
 \pi_0 \Hom_{\DDdga}(C_\et^*(X_{\overline K}), \Qp). 
\]
In fact, we make the stronger claim that the target is a singleton. To prove this fact we will construct a particularly nice cofibrant replacement. While we're at it, we will also construct a formal model which will serve us below.

\ssegment{pipep}{Cofibrant model and formal model}
Since $X$ is an affine curve, the dga $C^*_\et = C^*_\et(X_{\overline K}, \Qp)$ is cohomologically concentrated in degrees $0$ and $1$; recall that we're also assuming that the cohomology in degree zero is equal to $\Qp$. We choose a vector subspace $M$ of $Z^1C^*_\et$ which splits the projection $Z^1C^*_\et \surj H^1C^*_\et$ and we let 
\[
S(M) = \Qp \oplus M \oplus \wedge^2M \oplus \cdots
\]
denote the free graded-commutative algebra generated by $M[-1]$ with zero differential.\footnote{The letter $S$ constitutes a notational pun, as it stands for both \textit{symmetric algebra} and for \textit{sphere object}.}
 The natural map of dga's 
\[
\tag{P}
S(M) \to C^*_\et
\]
is both surjective and induces surjections on cohomology. Hence, we may apply the construction of segments \ref{roofer}-\ref{block} to obtain a cofibrant replacement
\[
S(M) \to L \to C^*_\et
\]
of the morphism (P). Since $S(M)$ is cofibrant, $L$ itself is a cofibrant model of $C^*_\et$. It follows from the construction that 
\begin{itemize}
\item[(L1)]
$L^i = 0$ for $i<0$, and
\item[(L2)]
$L^0 = \Qp$.
\end{itemize}

Now let $J^1$ be any linear complement of the image of $M$ inside $L^1$ and extend $J^1$ to a graded vector subspace $J$ of $L$ by setting
\[
J^i = 
\left\{
\begin{matrix}
0 & i \le 0 \\
L^i & i \ge 2.
\end{matrix}
\right.
\]
Then evidently, $J$ is both a graded ideal and a sub-complex. Hence, taking the quotient by $J$, we obtain a map of dga's
\[
L \to \Qp[0] \oplus M[-1]
\] 
which is a quasi-isomorphism. This gives us our formal model
\[
'C := \Qp[0] \oplus M[-1].
\]

\ssegment{inorder}{}
In order to compute 
$
\pi_0\Hom_{\DDdga(\Qp)}(C_\et^*, \Qp)
$
we use the cofibrant model $L$. The properties \ref{pipep}(L1--L2) guarantee that
\[
\Hom_{\dga}(L, \Qp) = \{*\}
\]
is a singleton. Since the latter surjects onto the derived hom
\[
\Hom_{\Ddga} (L, \Qp) = \pi_0 \Hom_{\DDdga}(L, \Qp),
\]
the claim (\ref{derdia}) follows.

\ssegment{1611290931}{}
We now switch to using the formal model $'C$ (\ref{pipep}). The derived augmentation appearing in diagram \ref{derdia}(*) gives rise equally to derived augmentations 
\[
\Qp
\from
{'C} \to \Qp.
\]
Our computation of the set of augmentations (\ref{inorder}) shows that this diagram lifts to the model category $\dga(\Qp)$. The colimit 
\[
\Qp \coprod_{C^*_\et(X_{\overline K})}^{\DDdga} \Qp
\]
of \ref{derdia}(*) is thus represented by the reduced bar complex. The latter is, by definition, the total complex of a simplicial dga which has ${'C}^{\otimes n}$ in degree $n$; the degeneracies are given by all maps
\[
( {'C})^{\otimes (n-1)} \to
 ({'C})^{\otimes n}
 \]
induced by the inclusion $\Qp \to {'C}$. By lemma 8.3.7 of Weibel \cite{Weibel}, we may mod out by degeneracies prior to forming the total complex. The result is $(M[-1])^{\otimes n}$ in degree $n$. Upon passing to the total complex, we obtain the tensor algebra on $M$ in degree $0$, and zero in all other degrees. This establishes the connectedness and concentration theorems.

\segment{1611290932}{Remark}
In the course of the proof, we chose a splitting of the surjection 
\[
C_\et^1(X_{\overline K}, \Qp) \surj
H^1_\et(X_{\overline K}, \Qp)
\]
and denoted its image simply by $M\subset C_\et^1$. We noted that $\Qp[0] \oplus M[-1]$ has a quasi-isomorphism to $C^*_\et$ on the one hand, and that the former has only one augmentation on the other hand. This uniqueness of augmentations is reflected in the existence of compatible unipotent \'etale paths connecting every two base-points, depending on the choice of the splitting. Its de Rham cousin $\Om^*$ can be made more concrete. The resulting de Rham paths were observed by Deligne \cite{Deligne89} in a special case (under the further assumption that $X$ has a compactification $\overline X$ with $H^1(\overline X, \Oo) = 0$, in which case the resulting paths are in fact independent of the choice).

 We sketch a Tannakian construction of such paths which developed in conversations with Stefan Wewers.  In this setting, $M$ represents the choice of a subspace of $\Om^1(X)$ which maps isomorphically onto $H^1_\dR(X)$. Consider $(E, \nabla)$, a unipotent vector bundle with integrable connection. Since 
\[
H^1(X, \Oo) = 0,
\]
the underlying vector bundle $E$ is trivial. One can check that there's a unique $K$-form $V$ of $E$ such that $\nabla$ restricts to a $K$-linear map
\[
V \to V \otimes M.
\]
The functor 
\[
\om_M: E \mapsto V
\]
is a fiber functor. If $x$ is a $K$-point of $X$, then there are canonical isomorphisms
\[
V \xto{\sim} \om_x(E):=E(x),
\]
which, as $E$ varies, form an isomorphism of fiber functors
\[
\om_M \xto{\sim} \om_x.
\]

\segment{1611281015}{Remark}
The infinity-categorical steps in our proof can be unravelled into (somewhat cumbersome) model-categorical ones. If one attempts to work naively with model categories however, it's hard to see any direct relation between the left-derived realization on the level of $I$-diagrams
\[
Re^{a, I}_{\overline K, \et} 
( P\one \from QX \to P\one) 
\in
 \Ho(\dga^I)
\]
(which a priori may require a fibrant replacement of the diagram), with a diagram of dga's of the form
\[
\Qp \from
C_\et^*(X_{\overline K})
\to 
\Qp
\]
to which we can apply our explicit computation via the classical bar construction. 

\section{The unipotent \'etale fundamental group: comparison of external and internal viewpoints}%
\label{compsec}

\segment{fal1}{}
We put ourselves in the setting of the introduction ($Z$, $X$, $x$, $p$). We let $Z_p$ denote $\Spec$ of the complete local ring at $p$, and we let $X_p$ and $x_p$ denote the base change of $X$ and of $x$ to $Z_p$. We let $\et$ denote the pro-\'etale topology of Bhatt--Scholze \cite{ScholzeProet}. We let $K_p$ denote the function field of $Z_p$ and we fix an algebraic closure $\overline K_p$. If $f$ denotes the structure morphism
\[
f:X_p \to Z_p, 
\]
then $Rf_*\Qp_{X_{p,\et}}$ has the structure of a sheaf of a differential graded $\Qp_{Z_\et}$-algebras equipped with an augmentation induced by the section $x_p$. 

\segment{fal2}{}
We consider the model structure on the category $\dga(\Qp_{Z_\et})$ of Cisinski-D\'eglise \cite{CDHomAlg}. This allows us to take a homotopy colimit in order to form the \emph{rational p-adic \'etale loop ring} 
\[
B^\et_{x_p} := \hocolim
\left(
\Qp_{Z_{p,\et}}
\xfrom {x_p^*}
Rf_*\Qp_{X_{p,\et}}
 \xto{x_p^*}
 \Qp_{Z_{p,\et}} 
 \right).
\]
More generally, as in the motivic setting, if
\[
\Qp_{Z_{p,\et}}
\xfrom{\eta}
Rf_*\Qp_{X_{p,\et}}
 \xto{\xi}
 \Qp_{Z_{p,\et}} 
\]
are any two augmentations, the we may form the \emph{rational p-adic \'etale path ring}
\[
{_\eta B^\et_\xi} := \hocolim
\left(
\Qp_{Z_{p,\et}}
\xfrom{\eta}
Rf_*\Qp_{X_{p,\et}}
 \xto{\xi}
 \Qp_{Z_{p,\et}} 
 \right).
\]
With these definitions the family $\{ {_\eta B^\et_\xi} \}_{\xi,\eta}$ forms a cogroupoid in $\Ddga(\Qp_{Z_{p, \et}})$. Each $\Qp_{Z_{p, \et}}$-algebra $H^0({_\eta B^\et_\xi})$ is locally constant, so (by base-change to $\overline K_p$) the family $\{\Spec H^0({_\eta B^\et_\xi}) \}_{\xi,\eta}$ may equally be regarded as a groupoid in Galois representations.
 
 \segment{fal3}{}
In parallel with the motivic setting, the rational $p$-adic \'etale path rings of segment \ref{fal2} give rise to a sequence of maps $f_1^\et$, $f_2^\et$, $f_3^\et$, as in the diagram below
\[
\xymatrix{
X(Z_p) \ar[d]_-{f_1^\et} \ar@/^5ex/[dddr]^-{\mu}
\\
\Hom_{\Ddga(\Qp_{Z_\et})} (Rf_*\Qp_{X_\et}, \Qp_{Z_\et}) 
\ar[d]_-{f_2^\et}
\\
\Cotors B_{x_p}^\et
\ar[d]_-{f_3^\et}
\\
H^1\big(G_{K_p}, \Spec (H^0B^\et_{x_p})_{\overline K_p} \big)
\ar@{.>}[r]_-\sim
&
H^1(G_{K_p}, \Aut^\otimes \om_{x_p}^\un ),
}
\]
with
$
f_1^\et (y)
$
equal to the augmentation 
\[
Rf_*\Qp_{X_\et}
\xto{y^*}
 \Qp_{Z_\et}
\]
induced by $y$, 
\[
f_2^\et(\xi) := {_\xi B_{x_p}^\et},
\]
and
\[
f_3^\et(P) := \Spec H^0(P)_{\overline K}.
\]
We define the map $\mu$ in the diagram by sending a $Z_p$-point $y$ to the Tannakian path torsor 
\[
 \Isom^\otimes(\om_{x_p}^\un, \om_y^\un)
\]
through the category of unipotent $p$-adic \'etale local systems on $X_{\overline K_p}$. A $G_{K_p}$-equivariant isomorphism
\[
\Spec (H^0B^\et_{x_p})_{\overline K_p}
=
\Aut^\otimes \om_{x_p}^\un
\]
Will give rise to the dotted-arrow isomorphism. More generally, a $G_{K_p}$-equivariant isomorphism
\[
\Spec (H^0{_yB^\et_{x_p}})_{\overline K_p}
=
\Isom^\otimes(\om_{x_p}^\un, \om_y^\un)
\]
for each $y \in X(Z_p)$ will show the commutativity of the diagram. Our goal for this section is to establish these isomorphisms. We begin in segment \ref{ja2} in a general setting. 

\segment{ja2}{The unipotent fundamental group of a topos}
Let $X$ be a topos such that $\RR \Gamma(\QQ_X)$ is coconnective and let $x, y$ be points. We can associate to $x$ and $y$  a diagram
\[
\QQ \xfrom{y^*}
\RR \Gamma (\QQ_X) \xto{x^*} \QQ
\]
in the model (or infinity) category of (always commutative) dga's. (The model category structure follows, for instance, by combining Cisinski--D\'eglise \cite{CDHomAlg} with White \cite{White} as we did for $\Mdga$.) We then let $_yB_x$ be the homotopy colimit; its cohomology $H^0(_yB_x)$ is a $\QQ$-algebra. Let $\m{Uni}(\QQ_X)$ denote the category of unipotent local systems, and let $\om_x^\un$, $\om_y^\un$ denote the fiber functors
\[
\m{Uni}(\QQ_X) \xyto{}{} \Vect \QQ
\]
associated to $x$ and to $y$. Then there's an isomorphism
\[
\Spec H^0({_yB_x}) = \Isom^\otimes(\om_x^\un, \om_y^\un)
\]
(compatible with groupoid structures as we vary $x$ and $y$). Our goal here is to prove this statement, amplified by a certain base-changing property in a relative setting. 

\ssegment{21a}{}
For $X$ a topos and $K$ a field, we write $\Cpl(K_X)$ for the model category of complexes of $K_X$-modules, and $D(K_X)$ for its homotopy category, the (unbounded) derived category. We write $\dga(K_X)$ for the model category of commutative differential graded $K_X$-algebras, and $\Ddga(K_X)$ for its homotopy category.

\ssegment{21b}{}
If $f:X \to Z$ is a morphism of topoi, then there's a commutative differential graded $K_Z$-algebra $C(X) = C_Z(X,K)$, the \emph{cochain complex of $X/Z$}, which is contravariant in topoi over $Z$, and whose underlying complex of $K_Z$-modules represents $Rf_*K_Z$. In one of two possible constructions, we note that $f_*$, being right adjoint to a monoidal functor, is lax-monoidal, hence sends algebras to algebras. This means, for instance, that for $E,F \in \Cpl(K_X)$, there's a morphism
\[
f_*E \otimes f_*F \to f_*(E \otimes F).
\]
Fixing a fibrant-replacement functor on $\dga(K_X)$, we let $P_X$ denote the fibrant replacement of the unit object $K_X$. We then set 
\[
C(X) := f_*P_X.
\]
If $m$ denotes the multiplication map
\[
P_X \otimes P_X \to P_X,
\]
then the product structure on $C(X)$, for instance, is given by the composite
\[
f_*P_X \otimes f_*P_X \to f_*(P_X \otimes P_X) \xto{f_*m} P_X.
\]
The dga $C(X)$ may also be interpreted as the endomorphism ring of the unit object $K_X$, a construction which is a bit awkward on the model categorical level, but automatic in the infinity-categorical setting.  

\ssegment{21c}{}
Assume given in addition to the morphism $f: X\to Z$ of topoi, a pair of sections $\xi, \eta : X \leftleftarrows Z$. There is then a diagram
\[
K_Z
\xfrom{\eta}
C(X) \xto{\xi} K_Z
\]
in $\dga(K_Z)$. We let $_\eta B_\xi$ denote its homotopy colimit. When $\eta = \xi$ we write simply $B_\xi$. We call $_\eta B_\xi$ the \emph{path ring of $X/Z$ from $\xi$ to $\eta$}, and we call $B_\xi$ the \emph{loop ring of $X/Z$ at $\xi$}.  As the sections $\xi$, $\eta$ vary, the objects $_\eta B_\xi$ form a commutative Hopfoid (\ref{15.1}) in $D(K_Z)$. Under the assumption that $C(X)$ is coconnective we may then take cohomology with respect to the canonical t-structure to obtain a commutative Hopfoid $\{H^0({_\eta B_\xi }) \}_{\xi, \eta}$ in the  category 
\[
\Vect K_Z
\]
of $K_Z$-modules.

\ssegment{21e}{}
If $Z$ is a topos and $K$ a field, we let 
\[
D_\cell(K_Z) \subset D(K_Z)
\]
denote the smallest tensor-triangulated subcategory closed under small direct sums and containing the constant sheaf $K_Z$. Its heart, which we denote by $\Cell(K_Z)$ is equal to the Ind-completion of the category $\Uni(K_Z)$ of unipotent local systems.

\begin{ssthm}[Unipotent fundamental group of pointed topos]
\label{21d}
Let $K$ be a field containing $\QQ$. Consider a diagram of topoi
\[
\xymatrix{
X_z \ar[d]^{f_z} \ar[r]^\ze &
X \ar[d]_f 
\\
\Set \ar[r]_z \ar@/^7pt/[u]^x \ar@/^21pt/[u]^y &
Z \ar@/_7pt/[u]_\xi \ar@/_21pt/[u]_\eta
}
\]
in which all upwards arrows are sections of the respective downwards arrows, and all three squares commute. Assume that
$
z^*C(X) = C(X_z).
$
Assume that $C(X)$ is coconnective. Assume that every locally constant sheaf on $X_z$ which vanishes at $x$ vanishes globally. Let $\om_x^\un$, $\om_y^\un$ denote the fiber functors 
\[
\Uni(K_{X_z}) \xyto{}{} \Vect^c(K)
\]
induced by $x$, $y$. (Here $\Vect^c$ denotes the category of finite dimensional vector spaces.) Then we have an isomorphism 
\[
\tag{*}
\Spec z^* H^0({ _\eta B_\xi}) =
 \Isom^\otimes(\om_x^\un, \om_y^\un)
\]
which respects the groupoid structure as the sections $\xi$, $\eta$, $x$, $y$ vary.
\end{ssthm}

We begin by recalling two parts of Tannaka duality. 

\ssegment{hum1}{}
Suppose $T$ is a Tannakian category over a field $k$ and $\om$ is a $k$-rational fiber functor. Then the fundamental group 
\[
\pi_1(X, \om) := \Aut^\otimes \om
\]
is \emph{characterized} as the unique proalgebraic affine $k$-group whose representation category fits into a commutative triangle
\[
\xymatrix{
\Rep^c_k \big( \pi_1(T,\om) \big)
 \ar[rr]^-\sim \ar[dr] &&
T \ar[dl]^\om
\\
&
\Vect^c k.
}
\]
(The superscript $c$ refers to \emph{compact objects}.) 

\ssegment{hum2}{}
If $\eta$ is another fiber functor, then the Tannakian path torsor 
\[
_\eta P_\om (X) := \Isom^\otimes(\om, \eta) 
\]
is characterized as the unique $\pi_1(T,\om)$--$\pi_1(T,\eta)$--bitorsor such that pushout along $_\eta P_\om$ fits into a commuting triangle
\[
\xymatrix{
\Rep^c_k \big( \pi_1(T,\om) \big) \ar[dr] \ar[dd]
\\
& T. \\
\Rep^c_k \big( \pi_1(T,\eta) \big) \ar[ur]
}
\]

\ssegment{Morita on Friday}{}
If $A$ is a dga, we let $\DMod A$ denote the derived category of $A$-modules. If $Y$ is a topos (regarded as topos over $\Set$), then Morita theory, as formulated by Lurie \cite[\S7.1.2.1--7]{LurieAlg} provides an equivalence of triangulated categories
\[
D_\cell(K_Y) = \DMod C(Y,K).
\]
Moreover, this equivalence is functorial in the following way. If 
\[
f: Y' \to Y
\]
is a morphism of topoi, then the induced morphism of dga's
\[
CY' \from CY
\]
gives rise, via the bar construction, to a morphism $f^*$ of triangulated categories, and the square
\[
\xymatrix{
\DMod CY' \ar@{=}[d] &
\DMod CY \ar[l]_{f^*} \ar@{=}[d]\\
D_\cell K_{Y'}  &
D_\cell K_{Y} \ar[l]
}
\]
commutes.

\ssegment{Koszul on Friday}{}
If $A$ is a differential graded coalgebra, we let $\DcoComod A$ denote the coderived category of comodules of Positselski \cite[\S4.4]{Positselski}, a certain variant of the derived category, which comes equipped with a natural t-structure whose heart is the same as the heart of the derived category of comodules. If $Y$ is a topos and $y: \Set \to Y$ is a point, then the Koszul duality of Positselski \cite[\S6.3]{Positselski} provides an equivalence of triangulated categories
\[
\DMod CY = \DcoComod B_y.
\]
Here $CY$ denotes the cochain complex of $Y/\Set$ (\ref{21b}) and $B_y$ denotes the loop ring of $Y/\Set$ at $y$ (\ref{21c}). Moreover, this equivalence is functorial in the following way. If
\[
f: Y' \to Y
\]
is a morphism of topoi and $y'$ is a point of $Y$ lying above $y$, then the a natural morphism of comonoids in dga's
\[
f_*: B_{y'} \to B_y
\]
gives rise, by forgetting along $f_*$, to a morphism $f^*$ of triangulated categories, and the square
\[
\xymatrix{
\DcoComod B_{y'} &
\DcoComod B_{y} \ar[l]_{f^*} \\
\DMod CY' \ar@{=}[u] &
\DMod CY \ar[l] \ar@{=}[u] \ar[l]
}
\]
commutes.

\ssegment{applying}{}
Applying segments \ref{Morita on Friday} and \ref{Koszul on Friday} with $Y = X_z$, and $Y' = \Set$, we obtain a commuting triangle of triangulated categories
\[
\xymatrix
{
D_\cell K_{X_z} 
\ar@{=}[rr] \ar[dr]_{x^*}
&&
\DcoComod B_x \ar[dl]^{x^*}
\\
&
D(K).
}
\] 
On both sides, the natural t-structure is the one induced by the natural t-structure on $D(K)$. In the case of $\DcoComod B_x$, this is clear; in the case of $D_\cell K_{X_z}$, this follows from the connectivity property included in our assumptions, in view of the local-constancy of cohomology sheaves in the cellular category. It follows that the equivalence of triangulated categories is compatible with t-structures, and that the induced equivalence of hearts is compatible with the fiber functors. 

\ssegment{21e}{}
Denoting the heart of the natural t-structure by $\heartsuit$, we have 
\[
 \Comod H^0z^* B_\xi  = \heartsuit \Comod z^*B_\xi
\]
because of flatness of modules over a field. We also have
\[
z^*B_\xi = B_x
\]
because of the base-changing property assumed in the statement. Combining these observations with the Morita theory and Koszul duality discussed above, we obtain a sequence of equivalences of neutralized Ind-Tannakian categories, which we may summarize as follows:
\begin{align*}
\Comod z^*H^0B_\xi
& = \Comod H^0z^* B_\xi 
\\
& \overset{\mbox{flatness}} = 
\heartsuit \DcoComod z^*B_\xi
\\
& \overset{\mbox{base change}} = 
 \heartsuit \DcoComod B_{x}
\\
& \overset{\mbox{Koszul duality}} =
  \heartsuit \DMod \big( C(X_z) \big)
\\
& \overset{\mbox{Morita theory}} =
  \heartsuit D_\cell(K_{X_z}) 
\\
&= \Cell(K_{X_z}).
\end{align*}
The isomorphism of groups (\ref{21d}(*) for $\eta = \xi$ and $x=y$) now follows from our clause \ref{hum1} of Tannaka duality. 

\ssegment{torsors on Friday}{}
For the isomorphisms of path torsors, we claim that the triangle
\[
\xymatrix{
\DcoComod z^*B_x 
\ar@{<-}[dr]^\sim 
\ar[dd]_{\otimes (z^* {_\eta B_\xi}) } 
\\
&	D_\cell(K_{X_z})
\\
\DcoComod z^*B_y
\ar@{<-}[ur]_\sim
}
\]
commutes. Indeed, this amounts to the property of the bar construction
\[
B(M,A,x) \otimes_{B(x,A,x)}B(x,A,y) =
B(M,A,y)
\]
applied to $A = CX_z$ and $M$ a cellular complex of $K_{X_z}$-modules. We may then take $H^0$'s on the left and heart on the right, and apply part \ref{hum2} of Tannaka duality to obtain the hoped-for isomorphism. This completes the proof of theorem \ref{21d}.

Our application is a direct consequence of theorem \ref{21d}:

\begin{sthm}[Comparison of \'etale-internal and \'etale-external constructions]\label{ext-int}
Let $Z$ be a Dedekind scheme with function field $K$,
\[
f: X \to Z
\]
a smooth scheme over $Z$, $z = \Spec \overline K$ a generic geometric point of $Z$, $\xi, \eta$ sections of $f$ inducing sections $x, y$ of the pullback $f_z$ of $f$ to $z$ as in the the diagram of theorem \ref{21d}. We let $\et$ denote the pro-\'etale topology of Bhatt-Scholze \cite{ScholzeProet}. Let
\[
\m{Uni}(\Qp_{{X_z}_\et}) \xyto{\om_x^\un}{\om_y^\un} \Vect^c (\Qp)
\]
denote the fiber functors on the category of unipotent local systems on $X_z$ induced by $x$ and $y$. Then there's a $G_K$-equivariant isomorphism
\begin{align*}
\tag{*}
\Isom^\otimes & (\om_x^\un, \om_y^\un)
=
\\
&
\Spec z^*H^0 \big( 
\hocolim_{\dga(\Qp_{Z_\et})}
 ( 
  {\Qp}_{Z_{\et}}
  \xfrom{\eta^*}
 Rf_*{\Qp}_{X_\et} 
 \xto{\xi^*}
  {\Qp}_{Z_{\et}}
 ) 
 \big).
\end{align*}
\end{sthm}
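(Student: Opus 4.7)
The plan is to apply Theorem \ref{21d} directly to the diagram of pro-\'etale topoi associated to the square $X_z \to X$ above $z \to Z$, equipped with the sections $\xi,\eta$ of $f$ and their restrictions $x,y$ on the fiber, taking the coefficient field to be $\Qp$. Two hypotheses must then be verified.

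First, the base-change property $z^*C(X) = C(X_z)$, where $C(-) = Rf_*\Qp_{(-)_\et}$: the underlying-complex equivalence is smooth base change in the pro-\'etale framework of Bhatt--Scholze \cite{ScholzeProet}, valid because $f: X \to Z$ is smooth. The dga (equivalently $E_\infty$-algebra) enhancement follows because $z^*$ is strict symmetric monoidal while $Rf_*$ is lax symmetric monoidal, with the lax structure becoming an equivalence after restricting to the relevant (cohomologically bounded, constructible) subcategory. Second, the connectivity property: any unipotent pro-\'etale $\Qp$-local system $V$ on $X_z$ with $V_x = 0$ vanishes. Indeed, $X_z$ is geometrically connected (it admits the $\overline K$-point $x$), so $V$ admits a finite filtration whose successive quotients are constant sheaves $\Qp^{n_i}$, and the ranks $n_i$ are determined by restriction to $x$; hence $V_x = 0$ forces $V = 0$.

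With both hypotheses established, Theorem \ref{21d} produces a natural isomorphism
\[
\Spec z^* H^0({_\eta B_\xi}) \; = \; \Isom^\otimes(\om_x^\un, \om_y^\un),
\]
functorial in the quadruple $(\xi,\eta;x,y)$. The $G_K$-equivariance is then automatic: the point $z = \Spec \overline K$ carries a canonical $G_K = \operatorname{Gal}(\overline K/K)$ action by $Z$-automorphisms, inducing compatible $G_K$-actions on both sides of (*), and naturality of the isomorphism in $z$ gives the desired equivariance.

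The main obstacle is ensuring that base change holds as an equivalence of dga's, not merely of underlying complexes, since the Hopfoid structure on path rings depends on the full algebra structure. This is cleanest in the infinity-categorical setting, where the symmetric monoidal equivalence $z^* R f_* \simeq R f_{z*}$ can be transported through commutative algebra objects without further bookkeeping; at the model-categorical level some care is required with lax-monoidal comparison maps, but the arguments are essentially formal once the infinity-categorical framework implicit in the proof of Theorem \ref{21d} is available.
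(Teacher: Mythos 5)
Your approach matches the paper's: the proof there is a single sentence applying Theorem \ref{21d} and invoking functoriality for the $G_K$-equivariance. You usefully supply verification of the two hypotheses of Theorem \ref{21d}, which the paper leaves implicit, but your connectivity argument contains an error. The inference ``$X_z$ is geometrically connected (it admits the $\overline K$-point $x$)'' is false: a disconnected scheme such as $\Spec\overline K \sqcup \Spec\overline K$ has plenty of $\overline K$-points. What is actually needed is a standing assumption that $X$ is geometrically connected over $Z$; this must be imported from the broader context --- compare the concentration theorem (\ref{ctr}), which assumes it explicitly --- and without it Theorem \ref{ext-int} as stated is false. Granted connectedness, your filtration-and-stalks argument for the connectivity property is correct.

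On the base-change hypothesis, the invocation ``smooth base change\ldots valid because $f: X \to Z$ is smooth'' is a misattribution: the smooth base-change theorem in its standard form requires the \emph{base-change} morphism (here $z = \Spec\overline K \to Z$) to be smooth, not the structure morphism $f$. The direct argument is that $z\to Z$ is a cofiltered limit of \'etale maps (open immersions into $Z$ composed with finite \'etale covers of $\Spec K$), so \'etale base change at each finite stage plus continuity of \'etale cohomology give $z^* Rf_* \simeq Rf_{z,*}$; this uses no smoothness of $f$ at all. Your remarks on upgrading the underlying-complex equivalence to one of $E_\infty$/dga structures are reasonable as stated.
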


\begin{proof}
The isomorphism furnished by theorem \ref{21d} is $G_K$-equivariant by functoriality. 
\end{proof}

We'll denote both sides of equation \ref{ext-int}(*) by ${_y \pi_x^\et}(X_{\overline K})$, or by $\pi_1^\et(X_{\overline K}, x)$ when $x = y$.\footnote{Let us emphasize that these are prounipotent completions of the classical profinite fundamental group and path torsors.} 

\section{h-Cotorsors and factorization of Kim's cutter}

\segment{1611290918}{}
We consider a smooth affine curve $X$ with \'etale divisor at infinity over an open integer scheme $Z \subset \Spec \Oo_K$, and a $Z$-point $x \in X(Z)$. We then have the loop ring $B_x$ of segment \ref{1611290926}. According to \S\ref{1611290928} and \ref{12.3} , $B_x$ has the structure of a commutative Hopf algebra object of the monoidal category $\DA = \DA(Z,\QQ)$. 

\segment{1611290919}{}
To define the notion of \emph{cotorsor} we wish to consider, we regard $B_x$ as a cogroup object of the category $\cMon \DA$ of commutative monoid objects in $\DA$. We define a \emph{pseudo-cotorsor under $B_x$} to be an object $P$ of $\cMon \DA$ equipped with a coaction map 
\[
B_x \otimes^\LL_\one P \from P
\]
such that 
\[
B_x \otimes^\LL_\one P \xfrom{\sim} P \otimes^\LL P;
\]
we say that $P$ is a \emph{cotorsor} if moreover  
\[
\tag{*}
H^0_\et(P_{\overline K}, \Qp) \neq 0.
\]
Here the symbol $\otimes^\LL_\one$ refers to the monoidal structure on $\cMon \DA$ induced by the monoidal structure $\otimes^\LL$ on $\DA$.

\segment{27auga}{}
Assume $B_x$ is cellular. A pseudo-cotorsor is \emph{cellular} if the associated object of $\DA$ is cellular. In this case, the torsor condition \ref{1611290919}(*) is equivalent to
\[
\tag{*Cell}
H^0_t(P) \neq 0
\]
where $t$ refers to the cellular t-structure of \S\ref{celmot}. We let $\Cotors_{\cell} (B_x)$ denote the set of cellular cotorsors. We define 
\[
f_2: \Hom_{\DMdga}(\C(X), \one) \to
 \Cotors_{_\cell} (B_x)
\]
by
\[
f_2(\om) := {_\om B_x}.
\]

\segment{1611290920}{Lemma}
Suppose $B$ is a cogroup object of the category $\cMon \DA$ of commutative monoids in $\DA$. Fix $i \neq 0$.
\begin{enumerate}
\item
 Assume $H^i_\et(B_{\overline K}, \Qp) =0$.
 If $P$ is a cotorsor under $B$, then
 \[
 H^i_\et(P_{\overline K}, \Qp) = 0.
 \] 
 \item
 Assume $B$ is cellular and assume $H^i_t(B) = 0$. If $P$ is a cellular cotorsor under $B$, then
 \[
 H^i_t(P) = 0.
 \]
 \end{enumerate}

\begin{proof}
The two cases are similar; we restrict attention to (2). By functoriality of the Kunneth decomposition, we have an isomorphism
\[
0=H_t^i(B) \otimes H_t^0(P) \xfrom{\sim} H_t^i(P) \otimes H_t^0(P).
\]
Since $H_t^0(P) \neq 0 $ (\S \ref{27auga}), it follows that $H_t^i(P) = 0$ as claimed. 
\end{proof}

\segment{1611291405}{Mixed Tate case}
Assume $X$ is mixed Tate, so that the rational loop ring $B_x$ associated to our fixed $Z$-point $x$ is cellular, and the cellular t-structure of section \ref{tstrsec} is available. In view of the concentration theorem (\ref{ctr}), the pro-\textit{mixed Tate motive} $H^0_t(B_x)$ has the structure of a commutative Hopf algebra in $\Ind \TM^c(X,\QQ)$.  Following Deligne \cite{Deligne89}, we let $\Spec$ of a commutative monoid object denote itself regarded as an object of the opposite category. In this language, we set 
\[
\pi_1^\un(X,x) := \Spec H^0_t(B_x).
\]
Recall that the category of mixed Tate motives $\TM(Z,\QQ)$ possesses a canonical $\QQ$-rational fiber functor $\om_\can$ \cite[\S1.1]{DelGon}. We let 
\[
\pi_1^{\TM}(Z)
\]
denote the fundamental group of $\TM^c(Z,\QQ)$ at $\om_\can$. After applying $\om_\can$ to $H^0_t(B_x)$ we obtain a genuine Hopf algebra over $\QQ$ and we may take its spectrum in the usual sense:
\[
\pi_1^\un(X,x)^{\can} :=
 \Spec \om_\can 
 \big( 
 H^0_t(B_x)
 \big)
\]
to obtain a prounipotent $\QQ$-group equipped with an action of $\pi_1^{\TM}(Z)$. The group object $\pi_1^\un(X,x)$ and the group with $\pi_1^{\TM}(Z)$-action $\pi_1^\un(X,x)^\can$ merely package the same data in slightly different ways, and when there is no danger of confusion, we drop the superscript ``can''.\footnote{
We do not carry out the task of comparing our unipotent fundamental group with the one constructed by Deligne--Goncharov \cite{DelGon}, Esnault-Levine \cite{EsLevTate} and Levine \cite{LevineTMFG}. We note however that our group has all the needed properties (comparison with Tannakian \'etale and de Rham fundamental groups, relationship with motivic Ext groups) to carry out ``motivic Chabauty--Kim theory'', as in Dan-Cohen--Wewers \cite{mtmue}, Dan-Cohen \cite{CKIII}, Brown \cite{BrownUnit}, and Corwin--Dan-Cohen \cite{CorwinDCI, CorwinDCII}.
}

\ssegment{1611291401}{}
In view of lemma \ref{1611290920}, the concentration theorem (\ref{ctr}), the cellular motives theorem (\ref{cellmot}), and interplay between Hopfoids and localization (\ref{12.3}), if $P$ is a cellular cotorsor under $B_x$, then $H^0_t(P)$ is a cotorsor under $H^0_t(B_x)$ in $\cMon \Ind \TM^c(Z, \QQ)$.  Thus, in terms of the canonical fiber functor $\om_\can$, we find that
\[
f_3(P):=\Spec \om_\can H^0_t(P)
\]
is a $\pi_1^{\TM}(Z)$-equivariant $\pi_1^\un(X,x)$-torsor; the equivariance means that there's an action of $\pi_1^{\TM}(Z)$ on $f_3(P)$ such that the action map
\[
f_3(P) \times \pi_1^\un(X,x) 
 \to
 f_3(P)
\]
is $\pi_1^{\TM}(Z)$-equivariant for the induced actions.

If $\om$ is any $\QQ$-rational fiber functor on $\TM(Z,\QQ)$, then the pointed set 
\[
H^1\big( \TM(Z, \QQ), \pi_1^\un(X,x) \big)
 :=
 H^1 \Big(
  \pi_1 \big( \TM(Z, \QQ), \om  \big), 
 	\om \big( \pi_1^\un(X,x) \big) 
	\Big)
\]
is independent of the choice of $\om$ (up to canonical bijection). Indeed, its elements are in bijection with cotorsors in $\cMon \Ind \TM(Z, \QQ)$.
We thus obtain a map of sets 
\[
f_3: \Cotors_{\cell} (B_x) \to
 H^1\big( \TM(Z, \QQ), \pi_1^\un(X,x) \big).
\]

\ssegment{}{}
For $\pP$ a prime of $Z$, we let $K_\pP$ denote the complete local field of $Z$ at $\pP$. Let $\om^\un_{x_\pP}$ denote the fiber functor on the category of unipotent $p$-adic local systems on $X_{\overline K_\pP}$ induced by $x$.  We define
\[
\mu_3^\pP:
X(Z_\pP) \to
H^1 \big(
G_{K_\pP}, 
\Aut^\otimes (\om_{x_{\pP}}^\un)
\big)
\]
by
\[
\mu_3^\pP(y) = \Isom^\otimes(\om_{x_\pP}^\un, \om_y^\un).
\]

\ssegment{}{}
If $P$ is a cotorsor for $B_x(X_\pP)$, then 
\[
H^0(P_{\overline K_\pP}, \Qp) :=
 H^0 (\m{Re}_{\overline K_\pP, \et} P )
\]
has the structure of a commutative algebra and a cotorsor in the category of $p$-adic $G_{K_\pP}$-representations. By the comparison theorem (\ref{ext-int}), its spectrum is thus a $G_{K_\pP}$-equivariant $\pi_1^\et(X_{\overline K_\pP}, x)$-torsor. We define
\[
f^{\pP}_3: \Cotors_\cell \big(B_x(X_\pP) \big)
 \to H^1 \big(
 G_{ K_{\pP}},
  \pi_1^\et(X_{\overline K_\pP}, x)
  \big) 
\]
by
\[
f^{\pP}_3(P) = \Spec H^0 
\big(
B_x(X_\pP)_{\overline K}, \Qp
\big).
\]

\ssegment{diabig}{}
We let $X_\pP$ denote the base-change of $X$ to $Z_\pP$. For $X$ mixed Tate, and $p$ a prime of $\ZZ$ lying below $Z$, we obtain a diagram like so,
\[
\xymatrix
{
X(Z) \ar[r] \ar[d]_{f_1} &
		\prod_{\pP|p} X(Z_\pP) \ar[d]_{f_1^p}
		\ar@/^10pt/[d]^{\mu_1} 
		\ar@/^13ex/[dd]^{\mu_2}
		\ar@/^13ex/@<+7ex>[ddd]^{\mu_3}
		\ar@/^15ex/@<+9ex>[dddd]^{\mu^{[n]}}\\
\Hom_{\DMdga}(\hrat(X), \one) 
		\ar[r]^-{l_1} \ar[d]_{f_2} &
		\prod_{\pP|p}
		\Hom_{\DMdga}(\hrat(X_\pP), \one)
		 \ar[d]_{f_2^p} 
		 \\
\Cotors_\cell (B_x(X)) \ar[d]_{f_3} \ar[r]^-{l_2} &
		\prod_{\pP|p}
		\Cotors \big( B_x(X_\pP) \big) \ar[d]_{f_3^p} \\
H^1(\pi_1^{\TM}(Z), \pi_1^\un(X))  \ar[r]^-{l_3} \ar[d]_{f_4} &
	\prod_{\pP | p} H^1
	\big( G_{K_\pP}
	, \pi_1^\et(X_{\overline K_\pP}, x) \big)
	\ar[d]_{f_4^p}
\\
H^1(\pi_1^{\TM}(Z), \pi_1^\un(X)^{[n]})_{\Qp}  \ar[r]^-{l^{[n]}} &
	\prod_{\pP | p} H^1
	\big( G_{K_\pP}
	, \pi_1^\et(X_{\overline K_\pP}, x)^{[n]} \big),
}
\]
in which $\mu_3$ sends a point $y \in X(Z_\pP)$ to the Tannakian path torsor of unipotent \'etale paths from $x$ to $y$.


\ssegment{diaproof}{}
We claim that the diagram of segment \ref{diabig} is commutative. The only nontrivial commutativities concern the small square bounded above by $l_2$, and the small triangle bounded on the left by $\mu_2$. We start with the former. Recall that there's a monoidal $p$-adic \'etale realization functor
\[
\m{Re}_\et: \DA(Z_p,\Qp) \to D({\Qp}_{{K_p}_\et})
\]
(\ref{J2}). The composite functor 
\[
\DA_\cell(Z, \Qp) \subset \DA(Z, \Qp) 
\to 
\DA(Z_p,\Qp) \to D({\Qp}_{{K_p}_\et})
\]
is compatible with t-structures. So there's a commuting pentagon 
\[
\xymatrix
@C=-1ex
{
\Cotors_\cell B_x \ar[d] \ar[r] &
\Cotors_\cell B_{x_{Z_p}} \ar[r] &
\Cotors B_{x_{K_p}}^\et \ar[d] \\
H^1\big(\TM(Z,\QQ), \pi_1^\un(X,x)\big) \ar[rr] &
&
H^1\big(G_{K_p}, \pi_1^\et(X,x) \big),
}
\]
whence the hoped-for square. 

We turn to the small triangle bounded on the left by $\mu_2$. In view of the commutativity established in section \ref{compsec}, it will suffice to construct a commuting square
\[
\xymatrix{
\Hom_{\DMdga(Z_\pP, \QQ)} (CX_\pP, \one)
\ar[r] \ar[d]
&
\Hom_{\Ddga(\Qp_{Z_{\pP, \et}})} (C_\et X_\pP, \Qp_{Z_{\pP, \et}})
\ar[d]
\\
\Cotors B_x(X_\pP)
\ar[r]
&
\Cotors B^\et_x(X_\pP).
}
\]
This commutativity is the same, mutatis mutandis, as the equality established in segment \ref{fal10} with the structured realization functor of segment \ref{ja1} in place of the unstructured realization functor.

\ssegment{}{}
We set 
\[
 \left(\prod_{\pP|p} X(Z_\pP)\right)_
{\mbox{Kim, motivic}}
 := 
 \bigcap_n
 (\mu^{[n]})\inv(\overline {\Im} \; l^{[n]}).
\]

\segment{genc}{General case}
If $X$ is not mixed Tate, we resort to a less motivic version of the above diagram. Let $S$ denote the complement of $Z$ in $\Spec \Oo_K$ and let $T$ be the union of $S$ and of the primes lying above $p$. Let $K_T$ be the maximal field extension of $K$ which is unramified outside of $T$ and let
\[
G_T := \Gal(K_T/K).
\]
We repeat the above construction with the nonabelian absolute Galois cohomology $H^1 \big( G_T, \pi_1^\et(X_{\overline K}, x) \big)$ (as well as its $[n]$-variant) in place of the nonabelian absolute motivic cohomologies appearing in the lower left corner. We denote the localization map
\[
H^1 \big( G_T, \pi_1^\et(X_{\overline K}, x) \big)
\to
	\prod_{\pP | p} H^1
	\big( G_{K_\pP}
	, \pi_1^\et(X_{\overline K_\pP}, x) \big)
\]
by $l_{3,\et}$, and its $[n]$-variant by $l_{\et}^{[n]}$. We then set  
\[
 \left(\prod_{\pP|p} X(Z_\pP)\right)_
{\mbox{Kim}}
 := 
 \bigcap_n
 (\mu^{[n]})\inv(\overline {\Im} \; l_\et^{[n]})
\]
where $\overline \Im$ denotes scheme-theoretic image.

\ssegment{fcond}{Remark}
When $Z$ is an open subscheme of $\Spec \ZZ$ and $p \in Z$, our $X(\ZZ_p)_\m{Kim}$ is equal to the locus considered by Kim in \cite{kimii}, as we now explain. In loc. cit., Kim considers certain subspaces ``$H^1_\m f$'' of the Galois cohomology spaces which fit into a commuting diagram like so:
\[
\xymatrix{
X(Z)
\ar[r] \ar[d]
&
X(\Zp)   \ar[d]^{\mu^{[n]}_\m f}
\\
H^1_\m f \big( G_T, \pi_1^\et(X_{\Qbar}, x)^{[n]} \big)
\ar[r]^{l^{[n]}_{\et,\m f}}
 \ar@{}[d]|\bigcap
&
H^1_\m f \big(
 G_{\Qp},
 \pi_1^\et(X_{\overline \QQ_p}, x)^{[n]}
  \big)
 \ar@{}[d]|\bigcap
\\
H^1 \big( G_T, \pi_1^\et(X_{\Qbar}, x)^{[n]} \big)
\ar[r]
&
H^1 \big(
 G_{\Qp},
 \pi_1^\et(X_{\overline \QQ_p}, x)^{[n]}
  \big)
}
\]
Moreover, it follows directly from the definitions that the bottom square is Cartesian. Consequently, Kim's locus
$
(\mu_\m f ^{[n]})\inv(\overline {\Im} \; l_{\et, \m f}^{[n]})
$
is equal to our
$
(\mu^{[n]})\inv(\overline {\Im} \; l_\et^{[n]}).
$
In other words, the crystalline condition signaled by the subscript ``f'' is not necessary for the construction of Kim's loci; its role, rather, is to render the maps $\mu^{[n]}$ computable.

\begin{sthm}[Factorization of Kim's cutter]
\label{factbig}
Suppose $Z$ is an open integer scheme and $X$ a smooth affine curve over $Z$ with \'etale divisor at infinity. Then in terms of 
the diagram of segment \ref{diabig}, with modifications as specified in segment \ref{genc}, we have a sequence of inclusions
\[
\xymatrix
{
X(Z)
\ar@{}[d]|\bigcap
\\
\left(\prod_{\pP|p} X(Z_\pP)\right)_
{\mbox{motivically global}}
 := \mu_{1}\inv(\Im l_{1})
 \ar@{}[d]|\bigcap
\\
\left(\prod_{\pP|p} X(Z_\pP)\right)_
{\mbox{pathwise motivically global}}
 := \mu_{2}\inv(\Im l_{2})
 \ar@{}[d]|\bigcap
\\
\left(\prod_{\pP|p} X(Z_\pP)\right)_{
\overset
{\mbox{pathwise motivically global}} 
{\mbox{up to $p$-adic \'etale homotopy}}
}
: = \mu_{3}\inv(\Im l_{3, \et})
 \ar@{}[d]|\bigcap
 \\
 \left(\prod_{\pP|p} X(Z_\pP)\right)_
{\mbox{Kim}}
.
 }
\]
Moreover, if $X$ is mixed Tate, we may replace ``$p$-adic \'etale homotopy'' by ``motivic homotopy'' (replace $l_{3,\et}$ by $l_{3}$) and we may replace the ``Kim locus'' by it's motivic version; together with the respective \'etale versions above, these form a lattice of inclusions like so (we drop the repeating symbols ``$\left(\prod_{\pP|p} X(Z_\pP)\right)$'' to save space):
\[
\xymatrix
{
\mbox{pathwise motivically global}
 \ar@{}[d]|-\bigcap
\\
{
\overset
{\mbox{pathwise motivically global}} 
{\mbox{up to motivic homotopy}}
}
 \ar@{}[d]|-\bigcap
  \ar@{}[r]|-\subset
&
{
\overset
{\mbox{pathwise motivically global}} 
{\mbox{up to $p$-adic \'etale homotopy}}
}
 \ar@{}[d]|\bigcap
 \\
{\mbox{Kim, motivic}}
 \ar@{}[r]|-\subset
&
{\mbox{Kim}}
.
 }
\]
\end{sthm}

\begin{proof}
It remains only to point out that in the mixed Tate case, the \'etale  variant of the bottom portion of diagram \ref{diabig} fits together with the motivic version like so:
\small
\[
\xymatrix
@C=2ex
{
H^1(\pi_1^{\TM}(Z), \pi_1^\un(X)) 
 \ar@/^3ex/[rr]^-{l_3} 
 \ar[d]
 \ar[r]
&
H^1 \big( G_T, \pi_1^\et(X_{\overline K}, x) \big)
\ar[r]_-{l_{3,\et}}
\ar[d]
&
	\prod_{\pP | p} H^1
	\big( G_{K_\pP}
	, \pi_1^\et(X_{\overline K_\pP}, x) \big)
	\ar[d]
\\
H^1(\pi_1^{\TM}(Z), \pi_1^\un(X)^{[n]})_{\Qp}
  \ar@/_3ex/[rr]_-{l^{[n]}}
  \ar[r] 
&
H^1 \big( G_T, \pi_1^\et(X_{\overline K}, x)^{[n]} \big)
\ar[r]^-{l_\et^{[n]}}
&
	\prod_{\pP | p} H^1
	\big( G_{K_\pP}
	, \pi_1^\et(X_{\overline K_\pP}, x)^{[n]} \big).
}
\qedhere
\]
\end{proof}

\segment{}{}
We note that theorem \ref{thmfact} of the introduction (``factorization of Kim's cutter, preliminary version'') follows from its final version, theorem \ref{factbig}. Indeed, according to Kim \cite{kimii}, the finite part ``$H^1_f$'' of the local nonabelian \'etale homology of theorem \ref{factbig} (discussed in segment \ref{fcond}) and the de Rham fundamental group of theorem \ref{thmfact} are isomorphic. Moreover, this isomorphism interchanges the unipotent Kummer map (denoted $\mu_3$ in diagram \ref{diabig}) with the $p$-adic unipotent Albanese map described in segment \ref{unpalb}, as in the diagram
\[
\xymatrix{
X(\ZZ_p)
    \ar[dr]^{\al} \ar[d]_-{\mu_3}
\\
H^1_\m f \big(
 G_{\Qp},
 \pi_1^\et(X_{\overline \QQ_p}, x)
  \big)
  \ar[r]_-\sim
 &
 \pi_1^\dR(X_{\Qp})/F^0
}
\]
of Kim \cite{kimii}.

\segment{}{}
Finally, we note that the ``pathwise motivically global theorem'' (\ref{pathglob}) follows from the concentration theorem (\ref{ctr}) in view of lemma \ref{1611290920}.

\bibliography{RMP_Refs}

\begin{thebibliography}{10}

\bibitem{AyoubComodules}
Joseph Ayoub.
\newblock From motives to comodules over the motivic {H}opf algebra.
\newblock \url{http://user.math.uzh.ch/ayoub/PDF-Files/Motivic-Hopf-3.pdf}.

\bibitem{AyoubSixI}
Joseph Ayoub.
\newblock Les six op\'erations de {G}rothendieck et le formalisme des cycles
  \'evanescents dans le monde motivique. {I}.
\newblock {\em Ast\'erisque}, (314):x+466 pp. (2008), 2007.

\bibitem{AyoubSixII}
Joseph Ayoub.
\newblock Les six op\'erations de {G}rothendieck et le formalisme des cycles
  \'evanescents dans le monde motivique. {II}.
\newblock {\em Ast\'erisque}, (315):vi+364 pp. (2008), 2007.

\bibitem{AyoubEtale}
Joseph Ayoub.
\newblock La r\'ealisation \'etale et les op\'erations de {G}rothendieck.
\newblock {\em Ann. Sci. \'Ec. Norm. Sup\'er. (4)}, 47(1):1--145, 2014.

\bibitem{BalakDog}
J.~Balakrishnan and N.~Dogra.
\newblock Quadratic chabauty and rational points {I}: p-adic heights.
\newblock arXiv:1601.00388.

\bibitem{BalBesMulComp}
Jennifer Balakrishnan, Amnon Besser, and Steffen M{\"u}ller.
\newblock Computing integral points on hyperelliptic curves using quadratic
  chabauty.
\newblock {\em Math. of Computation.}, To appear.

\bibitem{BalBesMul}
Jennifer Balakrishnan, Amnon Besser, and Steffen M{\"u}ller.
\newblock Quadratic chabauty: p-adic height pairings and integral points on
  hyperelliptic curves.
\newblock {\em Journal f{\"u}r die reine und angewandte Mathematik}, To appear.

\bibitem{BalBesColGr}
Jennifer~S. Balakrishnan and Amnon Besser.
\newblock Coleman-gross height pairings and the p-adic sigma function.
\newblock {\em Journal f{\"u}r die reine und angewandte Mathematik (Crelle's
  journal)}.
\newblock To appear.

\bibitem{nabsd}
Jennifer~S. Balakrishnan, Ishai Dan-Cohen, Minhyong Kim, and Stefan Wewers.
\newblock A non-abelian conjecture of {T}ate--{S}hafarevich type for hyperbolic
  curves.
\newblock {\em Math. Ann.}, 372(1-2):369--428, 2018.

\bibitem{BalakAppendix}
Jennifer~S. Balakrishnan, Kiran~S. Kedlaya, and Minhyong Kim.
\newblock Appendix and erratum to ``{M}assey products for elliptic curves of
  rank 1'' [mr2629986].
\newblock {\em J. Amer. Math. Soc.}, 24(1):281--291, 2011.

\bibitem{BataninBerger}
M.~A. Batanin and C.~Berger.
\newblock Homotopy theory for algebras over polynomial monads.
\newblock {\em Theory Appl. Categ.}, 32:Paper No. 6, 148--253, 2017.

\bibitem{Besser}
Amnon Besser.
\newblock Coleman integration using the {T}annakian formalism.
\newblock {\em Math. Ann.}, 322(1):19--48, 2002.

\bibitem{ScholzeProet}
Bhargav Bhatt and Peter Scholze.
\newblock The pro-{\'e}tale topology for schemes.
\newblock arXiv:1309.1198, 2013.

\bibitem{BousfieldGugenheim}
A.~K. Bousfield and V.~K. A.~M. Gugenheim.
\newblock On {${\rm PL}$} de {R}ham theory and rational homotopy type.
\newblock {\em Mem. Amer. Math. Soc.}, 8(179):ix+94, 1976.

\bibitem{BrownUnit}
Francis C.~S. Brown.
\newblock Integral points on curves, the unit equation, and motivic periods.
\newblock {\em arXiv:1704.00555}, 2017.

\bibitem{ChatUnv}
Andre Chatzistamatiou and Sinan {\"U}nver.
\newblock On {$p$}-adic periods for mixed {T}ate motives over a number field.
\newblock {\em Math. Res. Lett.}, 20(5):825--844, 2013.

\bibitem{ChiarLeStum}
Bruno Chiarellotto and Bernard Le~Stum.
\newblock {$F$}-isocristaux unipotents.
\newblock {\em Compositio Math.}, 116(1):81--110, 1999.

\bibitem{CDHomAlg}
Cisinski and D\'eglise.
\newblock Local and stable homological algebra in {G}rothendieck abelian
  categories.
\newblock {\em Homology, Homotopy and Applications}, 11(1):219--260, 2009.

\bibitem{Coleman}
Robert~F. Coleman.
\newblock Dilogarithms, regulators and {$p$}-adic {$L$}-functions.
\newblock {\em Invent. Math.}, 69(2):171--208, 1982.

\bibitem{CorwinDCI}
David Corwin and Ishai Dan-Cohen.
\newblock The polylog quotient and the {G}oncharov quotient in computational
  {C}abauty-{K}im theory {I}.
\newblock {\em International Journal of Number Theory, to appear}.

\bibitem{CorwinDCII}
David Corwin and Ishai Dan-Cohen.
\newblock The polylog quotient and the {G}oncharov quotient in computational
  {C}abauty-{K}im theory {II}.
\newblock {\em Transactions of the AMS, to appear}.

\bibitem{CKIII}
Ishai Dan-Cohen.
\newblock Mixed {T}ate motives and the unit equation {II}.
\newblock {\em Algebra and Number Theory, to appear}.

\bibitem{nmh}
Ishai Dan-Cohen and Tomer Schlank.
\newblock Morphisms of rational motivic homotopy types.
\newblock arXiv:1811.06365.

\bibitem{CKtwo}
Ishai Dan-Cohen and Stefan Wewers.
\newblock Explicit {C}habauty-{K}im theory for the thrice punctured line in
  depth 2.
\newblock {\em Proc. Lond. Math. Soc. (3)}, 110(1):133--171, 2015.

\bibitem{mtmue}
Ishai Dan-Cohen and Stefan Wewers.
\newblock Mixed {T}ate motives and the unit equation.
\newblock {\em Int. Math. Res. Not. IMRN}, (17):5291--5354, 2016.

\bibitem{DelGon}
P.~Deligne and A.~B. Goncharov.
\newblock Groupes fondamentaux motiviques de {T}ate mixte.
\newblock {\em Ann. Sci. \'Ecole Norm. Sup. (4)}, 38(1):1--56, 2005.

\bibitem{Deligne89}
Pierre Deligne.
\newblock Le groupe fondamental de la droite projective moins trois points.
\newblock In {\em Galois groups over {${\bf Q}$} ({B}erkeley, {CA}, 1987)},
  volume~16 of {\em Math. Sci. Res. Inst. Publ.}, pages 79--297. Springer, New
  York, 1989.

\bibitem{EsLevTate}
H{\'e}l{\`e}ne Esnault and Marc Levine.
\newblock Tate motives and the fundamental group.
\newblock {\em arXiv:0708.4034}.

\bibitem{HinichRectification}
Vladimir Hinich.
\newblock Rectification of algebras and modules.
\newblock {\em Doc. Math.}, 20:879--926, 2015.

\bibitem{HoveySpectra}
Mark Hovey.
\newblock Spectra and symmetric spectra in general model categories.
\newblock {\em J. Pure Appl. Algebra}, 165(1):63--127, 2001.

\bibitem{Iwanari}
Isamu Iwanari.
\newblock Bar construction and tannakization.
\newblock arXiv:1203.0492, 2012.

\bibitem{kimi}
Minhyong Kim.
\newblock The motivic fundamental group of {$\mathbb P^1\setminus
  \{0,1,\infty\}$} and the theorem of {S}iegel.
\newblock {\em Invent. Math.}, 161(3):629--656, 2005.

\bibitem{kimii}
Minhyong Kim.
\newblock The unipotent {A}lbanese map and {S}elmer varieties for curves.
\newblock {\em Publ. Res. Inst. Math. Sci.}, 45(1):89--133, 2009.

\bibitem{KimMassey}
Minhyong Kim.
\newblock Massey products for elliptic curves of rank 1.
\newblock {\em J. Amer. Math. Soc.}, 23(3):725--747, 2010.

\bibitem{KimTangential}
Minhyong Kim.
\newblock Tangential localization for {S}elmer varieties.
\newblock {\em Duke Math. J.}, 161(2):173--199, 2012.

\bibitem{KimHain}
Minhyong Kim and Richard~M. Hain.
\newblock A de {R}ham-{W}itt approach to crystalline rational homotopy theory.
\newblock {\em Compos. Math.}, 140(5):1245--1276, 2004.

\bibitem{LevineVanishing}
Marc Levine.
\newblock Tate motives and the vanishing conjectures for algebraic
  {$K$}-theory.
\newblock In {\em Algebraic {$K$}-theory and algebraic topology ({L}ake
  {L}ouise, {AB}, 1991)}, volume 407 of {\em NATO Adv. Sci. Inst. Ser. C Math.
  Phys. Sci.}, pages 167--188. Kluwer Acad. Publ., Dordrecht, 1993.

\bibitem{LevineMM}
Marc Levine.
\newblock {\em Mixed motives}, volume~57 of {\em Mathematical Surveys and
  Monographs}.
\newblock American Mathematical Society, Providence, RI, 1998.

\bibitem{LevineTMFG}
Marc Levine.
\newblock Tate motives and the fundamental group.
\newblock In {\em Cycles, motives and {S}himura varieties}, Tata Inst. Fund.
  Res. Stud. Math., pages 265--392. Tata Inst. Fund. Res., Mumbai, 2010.

\bibitem{LurieAlg}
Jacob Lurie.
\newblock Higher algebra.
\newblock http://www.math.harvard.edu/~lurie/.

\bibitem{LurieTopos}
Jacob Lurie.
\newblock {\em Higher topos theory}, volume 170 of {\em Annals of Mathematics
  Studies}.
\newblock Princeton University Press, Princeton, NJ, 2009.

\bibitem{OlssonBar}
Martin Olsson.
\newblock The bar construction and affine stacks.
\newblock {\em Comm. Algebra}, 44(7):3088--3121, 2016.

\bibitem{OlssonTowards}
Martin~C. Olsson.
\newblock Towards non-abelian {$p$}-adic {H}odge theory in the good reduction
  case.
\newblock {\em Mem. Amer. Math. Soc.}, 210(990):vi+157, 2011.

\bibitem{Positselski}
Leonid Positselski.
\newblock Two kinds of derived categories, {K}oszul duality, and
  comodule-contramodule correspondence.
\newblock {\em Mem. Amer. Math. Soc.}, 212(996):vi+133, 2011.

\bibitem{PridhamGalois}
J.~P. Pridham.
\newblock Galois actions on homotopy groups of algebraic varieties.
\newblock {\em Geom. Topol.}, 15(1):501--607, 2011.

\bibitem{pridhamenhanced}
J.~P. Pridham.
\newblock Tannaka duality for enhanced triangulated categories.
\newblock arXiv:1309.0637, 2013.

\bibitem{RobaloBridge}
Marco Robalo.
\newblock {$K$}-theory and the bridge from motives to noncommutative motives.
\newblock {\em Adv. Math.}, 269:399--550, 2015.

\bibitem{SchlankGalDu}
Tomer Schlank and Vesna Stojanoska.
\newblock Local galois duality for spectra.
\newblock In preparation.

\bibitem{SchlankPT}
Tomer Schlank and Vesna Stojanoska.
\newblock Poitou tate duality for spectra.
\newblock In preparation.

\bibitem{SchwedeShipley}
Stefan Schwede and Brooke~E. Shipley.
\newblock Algebras and modules in monoidal model categories.
\newblock {\em Proc. London Math. Soc.}, 80(3):491--511, 2000.

\bibitem{ToenAffine}
Bertrand To\"en.
\newblock Champs affines.
\newblock {\em Selecta Math. (N.S.)}, 12(1):39--135, 2006.

\bibitem{VoevTrCat}
Vladimir Voevodsky.
\newblock Triangulated categories of motives over a field.
\newblock In {\em Cycles, transfers, and motivic homology theories}, volume 143
  of {\em Ann. of Math. Stud.}, pages 188--238. Princeton Univ. Press,
  Princeton, NJ, 2000.

\bibitem{Weibel}
Charles~A. Weibel.
\newblock {\em An introduction to homological algebra}, volume~38 of {\em
  Cambridge Studies in Advanced Mathematics}.
\newblock Cambridge University Press, Cambridge, 1994.

\bibitem{White}
David White.
\newblock Model structures on commutative monoids in general model categories.
\newblock {\em Journal of pure and applied algebra}.
\newblock To appear.

\bibitem{Wojtkowiak}
Zdzis{\l}aw Wojtkowiak.
\newblock Cosimplicial objects in algebraic geometry.
\newblock In {\em Algebraic {$K$}-theory and algebraic topology ({L}ake
  {L}ouise, {AB}, 1991)}, volume 407 of {\em NATO Adv. Sci. Inst. Ser. C Math.
  Phys. Sci.}, pages 287--327. Kluwer Acad. Publ., Dordrecht, 1993.

\end{thebibliography}
\bibliographystyle{plain}

\vfill

\noindent\Small\textsc{
ID: Department of Mathematics \\ Ben-Gurion University of the Negev.
\\
} 
\texttt{ishaidc@gmail.com}

\bigskip

\noindent\Small\textsc{
TS: Einstein Institute of Mathematics \\ Hebrew University of Jerusalem. 
}
\\
\texttt{tomer.schlank@mail.huji.ac.il}

\end{document}